\theoremstyle{plain}
\newtheorem{theorem}{Theorem}[section]
\newtheorem{lemma}[theorem]{Lemma}
\newtheorem{GSS}[theorem]{The  Stallings-Swan Theorem}
\newtheorem{stallings}[theorem]{Stallings' Ends Theorem}
\newtheorem{as}[theorem]{The Almost Stability Theorem}
\newtheorem{corollary}[theorem]{Corollary}
\newtheorem{proposition}[theorem]{Proposition}
\theoremstyle{definition}
\newtheorem{definitions}[theorem]{Definitions}
\newtheorem{example}[theorem]{Example}
\newtheorem{notation}[theorem]{Notation}
\newtheorem{remark}[theorem]{Remark}
\newtheorem{remarks}[theorem]{Remarks}
\numberwithin{equation}{theorem}
\newcommand{\abs}[1]{\left\lvert#1\right\rvert} 
\newcommand{\gen}[1]{\langle\mkern3mu#1\mkern3mu\rangle}
\def \bbl1 {[\mkern-3mu[}
\def \bbr1 {]\mkern-3mu]}
\DeclareMathOperator{\size}{size}
\DeclareMathOperator{\complete}{Complete}
\DeclareMathOperator{\substabs}{-substabs}
\DeclareMathOperator{\irr}{irr}
\DeclareMathOperator{\length}{length}
\DeclareMathOperator{\Almosts}{Almosts}
\DeclareMathOperator{\V}{V}
\DeclareMathOperator{\E}{E}
\DeclareMathOperator{\T}{T}
\DeclareMathOperator{\U}{U}
\DeclareMathOperator{\X}{X}
\DeclareMathOperator{\Map}{Maps}
\DeclareMathOperator{\PP}{\mathscr{P}}
\DeclareMathOperator{\BB}{\mathscr{B}}
\DeclareMathOperator{\SP}{\Sigma P\hskip-.5pt}
\DeclareMathOperator{\rank}{rank}
\def \G {\mathfrak{ast}}
\def \comp {\textbf{c}}
\def \Ord {\mathcal{O}\mkern-1mu\mathit{rd}}
\def \leftmod {{\setminus}}
\def \reals {\mathbb{R}}
\def \integers {\mathbb{Z}}
\def \EE {\mathscr{E}}
\def \AA {\mathscr{A}}
\def \FF {\mathscr{F}}
\def\d1{\discretionary{-}{}{-}}
\def\coloneq{\mathrel{\mathop\mathchar"303A}\mkern-1.2mu=}
\def\ttt{\mathbf{t}}
\renewcommand{\phi}{\varphi}
\renewcommand{\epsilon}{\varepsilon}
\renewcommand{\le}{\leqslant}
\renewcommand{\ge}{\geqslant}
\renewcommand{\P}{\operatorname{P}}
\begin{document}

\title[An improved proof of the Almost Stability Theorem]
{An improved proof of\\the Almost Stability Theorem}

\author[Warren Dicks]{Warren Dicks\hskip1pt*}
\thanks{*\,Research supported by MINECO (Spain) through project numbers MTM2014-53644-P and  MTM2017-83487-P}
\date{}

\begin{abstract}
 In 1989, Dicks and Dunwoody proved the Almost Stability Theorem, which
has among its corollaries the Stallings-Swan theorem that groups of
cohomological dimension one are free. In this article,
we  use  a nestedness result of Bergman, Bowditch, and Dunwoody to simplify somewhat
the  proof of the finitely generable   case of the Almost Stability Theorem.
We also simplify the proof of the non finitely generable case.

The  proof we give here of the Almost Stability Theorem is essentially self contained, except that  in the non
finitely generable case  we refer the reader to the original argument for
the proofs of two technical lemmas about groups acting on trees.

\medskip

{\footnotesize
\noindent \emph{2010 Mathematics Subject Classification.} Primary: 20E08;
Secondary: \!05C25, 20J05.

\noindent \emph{Key words.} Groups acting on trees.   The Almost Stability Theorem.}

\end{abstract}

\maketitle

\begin{flushright}
\textit{To Martin Dunwoody%
, on the occasion of his 80th birthday.
}\end{flushright}

\bigskip

\section{Introduction}

Throughout, $G$ will denote a discrete, multiplicative group.

Unexplained terminology and notation used in the first two sections
will be defined in Section~\ref{sec:terminology}.

\begin{definitions} \label{defs:6.1}
For any sets $E$ and $Z$, we write $\Map(E,Z)$ to
denote  the  set of all  maps of sets from $E$ to $Z$,
with each $v \in \Map(E,Z)$ written as  $v\colon E \to Z$, $e \mapsto \langle v,e \rangle.$
For any  \mbox{$v, w \in \Map(E,Z)$}, we write\vspace{-1mm}
$$v \triangledown w:= \{e \in E \mid  \langle v,e \rangle \ne  \langle w,e \rangle\};\vspace{-1mm}$$
if this set  is  finite, then we say that
$v$ and $w$ are \textit{almost equal},  and write \mbox{$v =_{\text a} w$}.
Almost equality is an equivalence relation on $\Map(E,Z)$; its
  equivalence classes are called \textit{almost equality classes}.

If $E$ and $Z$ are  (left) $G$-sets, then $\Map(E,Z)$ is   a  $G$-set, with the
conjugation $G$-action, that is, if
\mbox{$v \in \Map(E,Z)$}, \mbox{$g \in G$}, and  \mbox{$e \in E$},  then
\mbox{$\langle  gv, e \rangle \coloneq g\langle   v, g^{-1}e \rangle$}, and,  hence,
 \mbox{$ g\langle   v, e \rangle = \langle  g v, g e \rangle$.}

A  $G$-set is said to be \textit{$G$-free}  if
each element's $G$-stabilizer is trivial, and is said to be \textit{$G$-quasifree} if
each element's $G$-stabilizer is finite.
 \hfill\qed
\end{definitions}

The following  is one form of~\cite[III.8.5]{DD}; see Remarks~\ref{rems:1}(ii) below.

\begin{as}\label{thm:ast} If  $E$ and $Z$ are any $G$-sets such that
$E$ is $G$-quasifree and  each element's $G$-sta\-bi\-li\-zer  stabilizes  some element of~$Z$,
then each $G$-sta\-ble almost equality class in   $\Map(E,Z)$
is the vertex $G$-set of some $G$-tree.  Any such $G$-tree  automatically has $G$-quasifree edge $G$-set.
\end{as}

The purpose of this article is to give a revised proof of Theorem~\ref{thm:ast} that
incorporates various simplifications which have become available since the original proof was published.

Let $\BB(G)$ denote the Boolean algebra of al\-most right $G$-stable subsets of $G$.
For $G$ finitely generated,
Bergman~\cite{GMB1} defined a well\d1ordered  measure on $\BB(G)$,
and
Bowditch and Dunwoody~\cite[8.1]{BHB} used the well-orderedness of Bergman's measure to show
that  each Boolean   $G$-subalgebra  of $\BB(G)$
 is generated by some nested  $G$-subset.
We shall recall their proofs, and then use their results to simplify the proof of
the case of Theorem~\ref{thm:ast} where $G$ is finitely generable.
Although it closely follows part of the proof in~\cite{DD}, this argument had not
been recorded before now;
Bowditch and Dunwoody~\cite[14.2]{BHB} had noted
the weaker conclusion that
 each $G$-sta\-ble almost equality class \emph{embeds} in the vertex $G$-set
of some $G$-tree, in the case where $G$ is finitely generable and $Z= \integers/2\integers$.

In the complementary case, where $G$ is not finitely generable,
we shall describe some further simplifications in the proof of Theorem~\ref{thm:ast}.

The  proof we give  of the Almost Stability Theorem~\ref{thm:ast} is essentially self contained, except that  in the non
finitely generable case  we refer the reader to the original argument in~\cite{DD} for
the proofs of two technical lemmas about groups acting on trees.

The article has the following structure.

In Section~\ref{sec:2}, to provide motivation,
we digress to show that Theorem~\ref{thm:ast} yields one   form of the result
of Stallings~\cite[6.8]{Stallings1} and Swan~\cite{Swan}  that groups of cohomological dimension one are free.

In Section~\ref{sec:3}, we record rather a large number of definitions, which will provide
much of the basic terminology that we shall be using.

In Section~\ref{sec:4}, we recall  from~\cite{MJD} and~\cite{DD}
Dunwoody's construction of trees from nested sets, here with a simplification by Roller~\cite{Roller}.

In Section~\ref{sec:5}, we recall  from~\cite{GMB1}  Bergman's well-ordered
measure, and we recall from~\cite{BHB}  the Bowditch-Dunwoody construction of
nested generating sets.

 In Section~\ref{sec:BB}, we use the results of Section~\ref{sec:4}
and the nested sets of Section~\ref{sec:5}
 to construct a  tree,
and we deduce  a result from~\cite{DD} which strengthened  a result of Dunwoody~\cite{MJD}.

In Section~\ref{sec:ends}, to provide motivation, we digress to deduce one
  form of Stallings' Ends Theorem~\cite[4.1]{Stallings2}.

In Section~\ref{sec:6}, we recall from~\cite{DD} the deduction of the finitely generable case of
Theorem~\ref{thm:ast}  from the results of Section~\ref{sec:BB}.

We then consider  the non finitely generable case, closely following ~\cite{DD}
 but with an improved
transfinite induction procedure.

In Section~\ref{sec:8}, we record, without proofs,  two lemmas about trees proved in~\cite{DD}.

In Section~\ref{sec:9}, we fix  notation that applies for the remainder of the proof.

In Section~\ref{sec:10}, we give results and proofs about finitely generable extensions.

In Section~\ref{sec:11}, we give  results and proofs about countably generable extensions.

In Section~\ref{sec:12}, we give the proof of the general case.

In Section~\ref{sec:13}, we give the proof of  the   analogue for extensions.

In this article,  we shall work with trees, and not discuss Bass-Serre theory.
We shall mention in each of the two digressions that certain information about trees
may be translated by Bass-Serre theory into information about groups.

\begin{remarks}\label{rems:1}  Let $E$ and $Z$ be any $G$-sets, and
 $ V$ be any  $G$-stable almost equality class in the $G$-set  $\Map(E,Z)$.

\medskip

 (i). We denote by  $\complete(V)$ the $G$-graph
with vertex $G$-set $V$ and edge $G$-set $\{(v,w) \in V \times V \mid  v \ne w\}$,
where each edge $(v,w)$ has initial vertex $v$ and terminal vertex $w$;
here, the $G$-stabilizer of   $(v,w)$ is a
subgroup of the $G$-stabilizer of $v \triangledown w$,
 and $v \triangledown w$ is a finite, nonempty subset of~$E$.
Thus, if $E$ is  $G$-quasifree, then the edge $G$-set of $\complete(V)$ is also   $G$-quasifree,
and, in particular, any $G$-tree with vertex $G$-set~$V$ has $G$-quasifree edge $G$-set.

\medskip

(ii).  Consider the following conditions.
\begin{enumerate}[\hskip .9cm(a)]
\item $G$ stabilizes each element of $Z$.
\item  $E$ is such that each element's  $G$-sta\-bi\-li\-zer   stabilizes  some element of~$Z$;
equivalently, there exists some
$G$-map from $E$ to $Z$;
equivalently, $G$~stabilizes some element of $\Map(E,Z)$.
\item Each finite subgroup of $G$ stabilizes some element of $V$.
\setcounter{enumi}{25}
\item  $E$ is  $G$-quasifree.
\end{enumerate}

Notice that  (b) and (z) are the hypotheses in Theorem~\ref{thm:ast}.
Since equivalence classes are nonempty by definition,  $V \ne \emptyset$, and  $\Map(E,Z)\ne \emptyset$;
 hence, if $E \ne \emptyset$, then $Z \ne \emptyset$.
It is easy to see that \mbox{(a) $\Rightarrow$ (b)}
and that  \mbox{(c)+(z) $\Rightarrow$ (b)}.
It is not difficult to use properties of almost equality to prove that    (b) implies~(c).
Thus,  if (z) holds, then \mbox{(b) $\Leftrightarrow$ (c) }.

In 1989, Dicks and Dunwoody~\cite[III.8.5]{DD}   proved the  case of Theorem~\ref{thm:ast}
 where (a)  holds.
 In this article, we
  shall see that    (b), as opposed to (a),  is the condition that was
used in that proof.

 Since (c) is a \textit{necessary} condition for the $G$-set $V$ to be the vertex $G$-set of a $G$-tree,
we now see that Theorem~\ref{thm:ast} says that
if (z) holds, then
the $G$-set $V$~is the vertex $G$-set of some $G$-tree if and only if (b) holds.

\medskip

 (iii).  In Theorem~\ref{thm:ast}, each hypothesis on $E$ determines a corresponding condition on
$\complete(V)$, and we have the following formulation:  If $E$ is  $G$-quasifree,
then, first, the edge $G$-set of $\complete(V)$ is   $G$-quasifree,
and, secondly, the $G$-set $\Map(E,Z)$ has some $G$-stable element if and only if
the $G$-set consisting of the maximal subtrees of $\complete(V)$
has some  $G$-stable  element.

 In the simplest case, where $V$ is  the almost equality class of a $G$-stable element~$v$ of $\Map(E,Z)$,
 there exists a  $G$-stable maximal subtree  of $\complete(V)$ with
 edge set $\{v\} \times (V{-}\{v\})$. \qed
 \end{remarks}

\section{Digression 1: The   Stallings-Swan Theorem}\label{sec:2}

In this section, to motivate  interest in the Almost Stability Theorem,
we recall how it implies the Stallings-Swan result that groups of
cohomological dimension one are free.
This and many other applications may be found in~\cite[Chapter~IV]{DD}.

  Let $\integers G$ denote  the integral group ring, and $\omega \integers G$ denote its augmentation ideal.

In 1953, Fox~\cite[(2.3)]{Fox} proved, but did not state, that if the group $G$ is   free,  then
the left $\integers G$-module $\omega \integers G$ is free. In 1956,
this implication was made explicit
by Cartan and Eilenberg~\cite[X.5]{CE}, who further observed that if $G$ is a nontrivial free group,
then the projective dimension of the left $\integers G$-module  $\integers$   is equal to  1.
In 1957, Eilenberg and Ganea~\cite{EG} defined  `the dimension  of a group~$G$',
now called the \textit{cohomological dimension of $G$}, to be
the projective dimension of the left $\integers G$-mod\-ule~$\integers$.  Thus, by definition, $G$ has
cohomological dimension at most one if and only if the
left $\integers G$-module $\omega \integers G$ is projective.  Hence, by Fox's result, all
 free groups have cohomological dimension at most one.
Eilenberg and Ganea remarked that they did not know whether or not all groups of
cohomological dimension   one are free.
In 1968, Stallings~\cite[6.8]{Stallings1} proved that all finitely generable groups of
cohomological dimension  one are free; in 1969,  Swan~\cite{Swan}
proved that \textit{all} groups of cohomological dimension one are free.
In the academic year within this same  period, 1968--9, Serre
gave a course on what is now called \textit{Bass-Serre theory},
and one of the many new results presented was the fact that
the group $G$ is free   if and only if $G$ acts freely on some
tree~\cite[I.3.2.15 and I.3.3.4]{Serre}.
To my knowledge, neither direction had  previously been stated in the literature in exactly this form.
It seems plausible that Dehn knew the `only if' direction in 1910,
in the context of  his work on Cayley graphs in~\cite{Dehn}.
Reidemeister came close to knowing the `if' direction  in 1932
in the context of his tree-based proof in~\cite[4.20]{Reidemeister} of the Nielsen-Schreier theorem  that
all subgroups of free groups are free.

In summary then, the left $\integers G$-module  $\omega \integers G$ is
projective if and only if
the group $G$ is  free   if and only if $G$ acts freely on some  tree.

The currently known proofs of the Almost Stability Theorem~\ref{thm:ast} use  many of the arguments of Stallings
and Swan.   We  shall now recall that one form of their theorem
 is in turn a consequence of Theorem~\ref{thm:ast}.

\begin{GSS}\label{thm:proj} If the left $\integers G$-module $\omega \integers G$ is projective, then
$G$ acts freely on some  tree.
\end{GSS}

\begin{proof}
 By hypothesis,  there
 exists some left $\integers G$-module $Q$ such that the left
$\integers G$-module $\omega \integers G \oplus Q$ is  free.
There then exists some free left $\integers$-module $A$ such that
the  (free) left $\integers G$-modules
$\omega \integers G \oplus Q$ and $AG:= \integers G \otimes_\integers A$ are isomorphic, and may be identified.
In a natural way, $\Map(G,A)$ is a left $\integers G$-module, and we may
 identify $AG$ with the ($G$-stable) almost equality class of   $0$  in  $\Map(G,A)$;
here, it is to be understood that  $G$ stabilizes each element of $A$.
Each element $r$ of $AG$ has a unique expression as $p+q$ with $p \in \omega \integers G$ and $q \in Q$,
and here  we shall write $r = p\oplus q$.

Let $g$ and $x$ represent variable   elements of $G$ ranging over all of $G$.

We set\vspace{-3mm}
$$\widehat{g{-}x}:= (g{-}x)\oplus 0 \in \omega \integers G \oplus Q = AG \subseteq \Map(G,A).$$
Notice that \mbox{$\widehat{g{-}x}=_{\text a}0$} in $\Map(G,A)$  and   \mbox{$\langle \widehat{g{-}x}, x \rangle \in A$}.
Essentially following Specker~\cite{Specker}, we consider the element $v$ of \mbox{$ \Map(G,A)$}
defined by  $ \langle v, x \rangle \coloneq \langle  \widehat{1{-}x}, x \rangle$,
and show that $gv   =   \widehat{g{-}1} + v =_{\text a}0+v$ in $\Map(G,A)$ as follows:\vspace{-1mm}
 \begin{align*}  &\langle gv , x \rangle
  =    g\langle v, g^{-1}x \rangle
\overset{\text{def  }v}{=}  g\langle  \widehat{1{-}g^{-1}x}, g^{-1}x \rangle
 \\&  = \langle  \widehat{g{-}x}  ,  x \rangle
=
\langle  \widehat{g{-}1} +  \widehat{1{-}x},x \rangle  \overset{\text{def  }v}{=}
  \langle  \widehat{g{-}1}+v  ,x \rangle.
\end{align*}
 Hence, the almost equality class $v + AG$ in $\Map(G,A)$  is $G$-stable.  By  the
Almost Stability Theorem~\ref{thm:ast},
the $G$-set $v + AG$ is the vertex $G$-set of some  $G$-tree.

It remains to show that the vertex $G$-set \mbox{$v + AG$} is $G$-free.
Suppose then that we have some \mbox{$g \in G$} and some \mbox{$r  \in AG$} such that
\mbox{$g(v+r) = v+r$}  in $\Map(G,A)$,
  that is, \mbox{$ (1{-}g)r = (g{-}1)v = \widehat{g{-}1}$}. Write \mbox{$r = p\oplus q$} with
\mbox{$p \in \omega \integers G$} and \mbox{$q \in Q$}.
Then   \mbox{$(1-g)p\oplus(1-g)q=(g-1)\oplus 0 $}  in \mbox{$\omega \integers G \oplus Q$}.   Thus,
\mbox{$g(p+1) = p+1$} in \mbox{$\integers G \subseteq \Map(G,\integers)$}.
Hence, \mbox{$p+1$} is constant on each   \mbox{$\langle g \rangle$}-orbit  \mbox{$\langle g \rangle x$} in $G$.
Since    \mbox{$p \in \omega \integers G$}, \mbox{$0 \ne p+1 =_{\text a} 0$} in $\Map(G,\integers)$.
Hence, $  \langle g \rangle  $ is finite.
  Now the augmentation map carries \mbox{$p+1$} to a $\integers$-multiple of
\mbox{$ \vert \langle g \rangle \vert$} and also to $1$.  Thus,
\mbox{$ \vert \langle g \rangle \vert = 1$}.  Hence,  $g = 1$, as desired.
\end{proof}

\begin{remark}  The foregoing argument applies to give Dunwoody's
characterization
of the groups $G$ such that the  left $RG$-module $\omega RG$ is projective, where $R$ is any
nonzero associative ring with $1$
and  $\omega RG$ denotes the augmentation ideal of the group ring $RG$; see~\cite[1.1]{MJD},~\cite[IV.3.13]{DD}. \qed
\end{remark}

\section{Terminology}\label{sec:terminology}\label{sec:3}

In this section, we collect together definitions of many of the concepts that we shall be using.

\begin{notation}  We  write $f \vert_D$ to indicate the map obtained from a map $f$  by restricting
the domain of $f$ to a subdomain $D$.

We shall find it useful to have notation  for intervals in $\integers$
that is different from the notation for intervals in $\reals$.
Let $i$, $j \in \integers$.
We define the sequence $$\bbl1 i{\uparrow}j\bbr1 \coloneq
 \begin{cases}
(i,i+1,\ldots, j-1, j) \in \integers^{j-i+1}   &\text{if $i \le j$,}\\
() \in \integers^0 &\text{if $i > j$.}
\end{cases}
$$
The subset of $\integers$ underlying $\bbl1 i{\uparrow}j\bbr1 $ is denoted $[i{\uparrow}j]\coloneq \{i,i+1,\ldots, j-1, j \}$.

We set $\bbl1 i{\uparrow}\infty\bbl1 \,\,\, \coloneq (i,i+1,i+2,\ldots)$ and
$[i{\uparrow}\infty[ \,\,\, \coloneq \{i,i+1,i+2,\ldots\}$ .

Suppose we have a set $V$ and a map $[i{\uparrow}j] \to V$, $\ell \mapsto v_\ell$.
We define the corresponding sequence in $V$ by $$v_{\bbl1 i{\uparrow}j \bbr1 }\coloneq  \begin{cases}
(v_i,v_{i+1},   \cdots, v_{j-1},   v_j) \in V^{i-j+1}  &\text{if $i \le j$,}\\
() \in V^0 &\text{if $i > j$.}
\end{cases}$$
By abuse of notation, we shall also express this sequence   as $(v_\ell \mid \ell \in \bbl1 i{\uparrow}j\bbr1 )$,
although \mbox{``$\ell \in \bbl1 i{\uparrow}j\bbr1 $"} on its own will not be assigned a meaning.
The set of terms of~$v_{\bbl1 i{\uparrow}j \bbr1 }$ is denoted~$v_{[i{\uparrow}j]}$.

We set $v_{\bbl1 i{\uparrow}\infty\bbl1 \,} \coloneq (v_i,v_{i+1},v_{i+2},\ldots)$
and $v_{[i{\uparrow}\infty[ \,} \coloneq \{v_i,v_{i+1},v_{i+2},\ldots\}$.
\hfill\qed
\end{notation}

\begin{definitions} By a \textit{well-ordered set},
we mean a set $S$ together with a total order~$\sqsubset$
such that, for each nonempty subset $T$ of $S$, there exists some
 $x \in T$ such that,  for each $t \in T$,
  $x \sqsubseteq t$.  It is then usual to treat the total order as  ``less than", and to
use phrases such  as ``all strictly descending  sequences are finite".

An \textit{ordinal} is a set $\beta$ such that, first, each element of $\beta$ is equal to some subset of
$\beta$, and,  secondly, $\beta$ is well-ordered by $\in$\,; see~\cite[2.10]{Jech}.

The three lower-case Greek letters $\alpha$, $\beta$, and $\gamma$  will be used to denote ordinals.

We let $\Ord$ denote the class of all ordinals, and,
for \mbox{$\alpha$, $\beta \in \Ord$,} we define  \mbox{$\alpha < \beta $} to mean \mbox{$\alpha \in \beta$}.
Thus, for each \mbox{$\beta \in \Ord$},  \mbox{$\beta = \{ \alpha \in \Ord  \mid \alpha < \beta\}$.}

Let $S$ be any set.  By the axiom of choice, $S$ can be well-ordered, and, hence,
there exists some $\alpha \in \Ord$ such that there exists some bijective map of sets from $\alpha$ to $S$.
The minimum of the set consisting of  such $\alpha$ is   denoted   $\vert S \vert$.
We write $\omega_{0}\coloneq \bigl\vert\,\, [0{\uparrow}\infty[\,\,\bigr\vert$; thus, $\omega_0$ is
the smallest infinite ordinal, the set of finite ordinals.
By abuse of notation, we view the elements of $[0{\uparrow}\infty[$\,\,  as  finite ordinals.  \hfill\qed
\end{definitions}

\begin{definitions}\label{defs:0}  Let $V$ be any set.

We denote by $\PP(V)$  the set of all subsets of~$V$,
and view $\PP(V)$ as a Boolean algebra in the usual way.

Let $A$ and $B$ be any elements of  $\PP(V)$.

We write \mbox{$A^{\comp} \coloneq \{v \in V \mid v \not \in A\}$}.

We  say that $A$ and $B$ are \textit{nested}
   if \mbox{$\emptyset \in \{ A \cap B, A \cap B^\comp ,  A^\comp \cap B, A^\comp \cap B^\comp\}$}.

We write $A{-}B \coloneq A \cap B^\comp$.

We write $A\vee B$ to denote   $A \cup B$ in the situation where $A \cap B = \emptyset$.

We write $A\triangledown B \coloneq (A{-}B)\vee (B{-}A)$.
If $ A\triangledown B$ is a finite set, we say that $A$ and $B$ are \textit{almost equal}, and write $A =_{\text a} B$.

For any subset $\EE$ of $\PP(V)$,
we denote by $\gen{\EE}_{{\BB}}$  the Boolean subalgebra of $\PP(V)$
generated by  $\EE$.

For each $v \in V$, we write $v^{\ast\ast} \coloneq \{A \in \PP(V) \mid v \in A\} \in \PP(\PP(V))$.
\hfill\qed
 \end{definitions}

\begin{definitions}\label{defs3.4}  We define the  \textit{rank of the group $G$} by
$$\rank(G)  \coloneq \min \{\,\vert S \vert :  S \text{ is a subset of $G$  which generates  } G \}.$$
For any subgroup $H$ of $G$, we define the  \textit{rank of $G$ relative to $H$}  by
$$\rank (G \text{\normalfont { rel }} H) \coloneq \min \{\,\vert S \vert :  S \text{ is a subset of $G$ such that $S\cup H$   generates  }
G \}.$$

By a \textit{$G$-set,} we mean a set $V$ together with a map
\mbox{$G \times V \to V, \,\, (g,v) \mapsto gv,$}
such that, for each  $v \in V$,
 $1v = v$ and, for each $(g_1, g_2) \in G \times G$, \mbox{$g_1(g_2v) = (g_1g_2)v$}.

By a \textit{right $G$-set}, we mean a set $V$ together with a map
\mbox{$V \times G \to V$},\, $(v,g) \mapsto vg$,
such that, for each  $v \in V$,
 $v1 = v$ and, for each $(g_1, g_2) \in G \times G$, \mbox{$(vg_1)g_2 = v(g_1g_2)$}.
(Here, $V$ is a $G$-set with $gv:= vg^{-1}$.)
All concepts defined for $G$-sets are understood to have analogues for right $G$-sets.

Consider  any $G$-set  $V$. For each subset $W$ of $V$, we write
$$GW \coloneq \{gw \mid g \in G, w \in W\}.$$
If   $GW = W$, we say that $W$ is a \textit{$G$-stable subset of $V$} and  a \textit{$G$-subset of $V$}.
For each $v \in V$, we define the \textit{$G$-stabilizer of $v$}
 to be \mbox{$G_v \coloneq \{g \in G \mid gv = v\} \le G$},
and the \textit{$G$-orbit of $v$} to be $Gv \coloneq \{gv \mid g \in G\}$, a $G$-subset of $V$.
We let \mbox{$G \leftmod V \coloneq \{Gv \mid v \in V\}$}, a  partition of $V$.
We say that $V$ is
\textit{$G$-finite} if  $G \leftmod V  $ is finite.
A \textit{$G$-transversal  in $V$} is a subset of $V$
which contains exactly one element of each $G$-orbit of $V$.
A subgroup~$H$ of~$G$ is said to  \textit{stabilize} an element $v$ of $V$ if
 $Hv = \{v\}$ or, equivalently,
$H_v = H$ or, equivalently, $H \le G_v$; if $H$ stabilizes some element of~$V$,
 we say that $H$ is a
\textit{$G$-substabilizer for~$V$}.
We let $G\substabs(V)$  denote the set of $G$-substabilizers for $V$.

Consider  any $G$-sets $V$ and $W$.  By a \textit{$G$-map} $\phi\colon V \to W$, $v \mapsto \phi(v)$, we mean a
map of sets such that, for each $(g,v) \in G \times V$, $\phi(gv) = g\phi(v)$.  There exists some
 $G$-map from $ V$ to $W$ if and only if  $G\substabs(V) \subseteq G\substabs(W)$.

A $G$-set $V$ is said to be \textit{$G$-incompressible} if each self $G$-map of $V$ is
bijective, or, equivalently, for each $(v,w) \in V \times V$, if $G_v \le G_w$, then
$G_v = G_w$ and $Gv = Gw$.  If $V$ is not $G$-incompressible, we say that $V$ is
\textit{$G$-compressible}.

We now consider the  right $G$-set $G$. Any  $A\in\PP(G)$ is  said to be
 \textit{almost right $G$-stable}  if,
for each $g \in G$, $Ag =_{\text a} A$.
We write  $\BB(G)$ to denote the Boolean subalgebra of $\PP(G)$
consisting of all the almost right $G$-stable elements.
For each subgroup $H$ of $G$,  any   $A \in \PP(G)$ is
 said to be  \textit{almost a right $H$-set}
if $A$ is almost equal to some right $H$-subset of $G$.
For each subset $\EE$ of $\PP(G)$, we let
$\Almosts(\EE)$  denote the set consisting of all those subgroups $H$ of~$G$
which have the property that each element of
$\EE$ is almost a right $H$-set.
\hfill\qed
\end{definitions}

\begin{definitions}\label{defs3.5} By a \textit{graph} $X$, we mean a quadruple
 $(\V(X),\E(X), \iota_{X},\tau_{{X}})$
  where
$\V(X)$ and $\E(X)$ are two disjoint  sets and
$\iota_{X}$ and $\tau_{{X}}$ are  maps from $\E(X)$ to~$\V(X)$.
Where $X$ is clear from the context, we write $\iota$ for $\iota_{X}$ and
$\tau$ for $\tau_{{X}}$.
We define $\vert X \vert \coloneq \vert \V(X) \vee \E(X) \vert$.
We say that   $\V(X)$ is the \textit{vertex set} of~$X$
and that $\E(X)$ is the \textit{edge set} of $X$, and  that
$\iota $ and $\tau $ are the \textit{incidence maps} of $X$.
We say that the elements of $\V(X)$ are the \textit{vertices} of $X$,
and the elements of $\E(X)$ are the \textit{edges} of~$X$.
For each edge $e$ of $X$, we say that $e$ is \textit{incident to} $\iota e $ and $\tau e $,
and that $\iota e $ is the
\textit{initial vertex} of $e$ and that $\tau e $ is the \textit{terminal vertex} of $e$.

A \textit{$G$-graph} $X$ is
  a graph for which  $\V(X)$, $\E(X)$ are  $G$-sets, and
$\iota $, $\tau $ are $G$-maps.
Passing to $G$-orbits gives a \textit{quotient graph $G\leftmod X$}.
Here $X$ is \textit{$G$-finite} if
$ G\leftmod X $ is finite, that is, $ \vert G\leftmod X \vert$ is finite.

For any subset $S$ of $G$,  the \textit{Cayley graph} $\X(G,S)$ is defined as the $G$-graph with
vertex $G$-set $G$, edge $G$-set $G \times S$ with $G$-action \mbox{$g_1(g_2,s):= (g_1g_2,s)$},
and incidence maps assigning to each edge \mbox{$(g,s) \in G \times S$} the initial vertex  $g$ and
the terminal vertex  $gs$.

Let $X$ be any graph.

A \textit{subgraph of $X$} is a graph whose vertex set and edge set are subsets of the
vertex set and edge set of $X$, respectively, and whose incidence maps agree  with those of $X$.

For each vertex $v$ of $X$, the  \textit{valence}
of $v$ in $X$ is $$\vert \{e \in \E(X) : \iota e = v \} \vert\,\, +\,\, \vert \{e \in \E(X) : \tau e = v \}\vert.$$
We say that $X$ is \textit{locally finite} if each vertex's valence is finite.

We create a set $\E^{-1}\mkern-2mu(X)$ together with a bijective map
$\E(X) \to \E^{-1}\mkern-2mu(X)$, \mbox{$e \mapsto e^{-1}$,} called \textit{inversion}.
We define $\E^{\pm 1}\mkern-2mu(X) \coloneq  \E(X) \vee \E^{-1}\mkern-2mu(X)$. We extend
$\iota $ to a map  $\iota  \colon\E^{\pm 1}\mkern-2mu(X) \to \V(X)$
by setting $\iota (e^{-1}) \coloneq \tau e $  for each $e \in \E(X)$.
Similarly, we extend
$\tau $ to a map   \mbox{$\tau  \colon \E^{\pm 1}\mkern-2mu(X)\to \V(X)$}
by setting $\tau (e^{-1}) \coloneq \iota e $ for each \mbox{$e \in \E(X)$.}
We extend inversion to a map $\E^{\pm 1}\mkern-2mu(X)\to\E^{\pm 1}\mkern-2mu(X)$, $e \mapsto e^{-1}$,
by defining \mbox{$(e^{-1})^{-1} \coloneq e$} for each $e \in \E(X)$.

By an \textit{$X$-path}, we shall mean any sequence   \mbox{$p=(v_0, e_1, v_1, e_2, \ldots, e_n,v_n)$}
such that \mbox{$n \in [0{\uparrow}\infty[\,$},
 $v_{\bbl1 0{\uparrow}n\bbr1 }$ is a sequence in $\V(X)$, $e_{\bbl1 1{\uparrow}n \bbr1 }$
is a sequence in $\E^{\pm 1}\mkern-2mu(X)$, and, for each $i \in [1{\uparrow}n]$,
$\iota (e_i) = v_{i-1}$ and $\tau (e_i) = v_i$.
We define the \textit{inverse} of $p$ to be
\mbox{$p^{-1} \coloneq (v_n, e_n^{-1}, \ldots, e_2^{-1}, v_1, e_1^{-1}, v_0)$.}
We say that $p$ is \textit{reduced} if, for each $i \in [2{\uparrow}n]$, $e_{i} \ne e_{i-1}^{-1}$.
We say that $p$ \textit{joins} $v_0$ to $v_n$, and that the pair $(v_0, v_n)$ is \textit{$X$-joined}.
  We define $\length(p) \coloneq n$.
If there exists no  $X$-path joining  $v_0$ to $v_n$  of smaller
length,  then we say that the \textit{$X$-distance} between $v_0$ and $v_n$ is~$n$.
For each finite subset $S$ of $\E(X)$,
we define the \textit{number of times $p$   crosses~$S$}
to be $\vert\{i \in [1{\uparrow}n] : e_i \in  S^{\pm 1}\}\vert$;
if this number is positive, we say that \textit{$p$   crosses~$S$}.
Where $S$  consists of a single edge, we shall usually speak of paths crossing that edge rather than crossing~$S$.

We say that $X$ is \textit{connected} if each pair of vertices  of $X$ is $X$-joined.
The maximal nonempty connected subgraphs of $X$ are called the \textit{components of $X$}.

For any subset $E'$ of $\E(X)$, the  \textit{graph obtained from $X$ by collapsing $E'$},
denoted $X/E'$, is the graph with edge set $E'^{\comp} \coloneq \E(X)-E'$,
vertex set the set of components of $X - E'^{ \comp}$, and
the induced incidence maps.  For example, $X/\E(X)$ maps bijectively to the set of components of $X$,
and here every edge of $X$ gets collapsed.

For each   $A \in \PP(\V(X))$,  we define the \textit{coboundary of $A$} (in $X$) as
\begin{align*}
\delta_X (A)  \coloneq \,\,&
\{e\in \E(X) \mid
(\iota e , \tau e ) \in \bigl((A  \times A^\comp) \vee (A^\comp \times A)\bigr)\}
\\ = \,\,& \{e\in \E(X) \mid
A \,\,\in \,\, (\iota e )^{\ast\ast}\, \,\triangledown\, (\tau e )^{\ast\ast}\};
\end{align*}
 where $X$ is clear from the
context we write  \mbox{$\delta A$} in place of \mbox{$\delta_X (A)$}.

The   \textit{Boolean algebra of $X$}, denoted $\BB(X)$, is defined as
the Boolean subalgebra of $\PP(\V(X))$
consisting of all the elements with finite coboundary in $X$.

We say that $X$ is a \textit{tree} if, for each $(v,w) \in  \V(X) \times \V(X)$, there exists
a unique reduced $X$-path  that joins $v$ to $w$. A \textit{$G$-tree}  is a $G$-graph which is a tree.
We say that $X$ is a \textit{forest} if, for each $(v,w) \in  \V(X) \times \V(X)$, there exists
at most one reduced $X$-path  that joins $v$ to $w$.
A \textit{$G$-forest}  is a $G$-graph which is a forest.

Let $T$ be any $G$-tree.
 We say that $T$ is   \textit{$G$-incompressible} if the $G$-set
$\V(T)$ is  $G$-incom\-pressible.
An edge $e$ of $T$ is said to be \textit{$G$-compressible} if
 there exists some $(v,w) \in \{(\iota e, \tau e), (\tau e, \iota e)\}$
 such that $Gv \ne Gw$ and $G_v \le G_w$; here, $G_w = G_e$.
\hfill\qed
\end{definitions}

In the following, the important conclusion $\BB(X) = \BB(G)$  is due to Specker~\cite{Specker}.

\begin{lemma}\label{lem:equalB}
Let $S$ be any  generating set of $G$, and set  \mbox{$X \coloneq \X(G,S)$.}
Then  $X$ is a  nonempty, connected,    $G$-free $G$-graph, and $\V(X)= G$.
Moreover, if $S$ is finite, then  $X$ is    locally finite, $X$ is $G$-finite,   and $\BB(X) = \BB(G)$.
\end{lemma}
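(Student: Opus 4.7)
I would prove Lemma~\ref{lem:equalB} in the order its conclusions are stated, handling first the assertions that hold for arbitrary generating sets $S$, then those that require $S$ to be finite.

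First, the identifications $\V(X)=G$ and $\E(X)=G\times S$ are immediate from the definition of $\X(G,S)$, and $\V(X)$ is nonempty since $1\in G$. The $G$-action on $\V(X)=G$ is left multiplication, which is free; and the $G$-action on $\E(X)=G\times S$, given by $g_1(g_2,s)=(g_1 g_2, s)$, is free on the first coordinate. Hence $X$ is a nonempty $G$-free $G$-graph. For connectedness, the plan is to show that every vertex can be joined to $1$ by an $X$-path: given $g\in G$, write $g=s_1^{\epsilon_1}\cdots s_n^{\epsilon_n}$ with $s_i\in S$ and $\epsilon_i\in\{\pm1\}$, which is possible since $S$ generates $G$, then assemble the obvious $X$-path $(1,\, e_1,\, s_1^{\epsilon_1},\, e_2,\, s_1^{\epsilon_1} s_2^{\epsilon_2},\ldots,g)$ whose $i$\d1 th edge is $(s_1^{\epsilon_1}\cdots s_{i-1}^{\epsilon_{i-1}},s_i)$ if $\epsilon_i=1$ and is the inverse of $(s_1^{\epsilon_1}\cdots s_i^{\epsilon_i},s_i)$ if $\epsilon_i=-1$.

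Now assume $S$ is finite. For local finiteness, for each vertex $g\in G$ the edges with $\iota(g,s)=g$ are indexed by $s\in S$, and the edges $(h,s)$ with $\tau(h,s)=g$ correspond to the pairs with $h=gs^{-1}$, again indexed by $s\in S$; so the valence is $2\,\vert S\vert <\infty$. For $G$-finiteness, the $G$-action on $\V(X)=G$ is transitive, contributing one orbit, while $G\leftmod\E(X)$ is in bijection with $S$; thus $\vert G\leftmod X\vert =1+\vert S \vert<\infty$.

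The most substantive step is the equality $\BB(X)=\BB(G)$; this is the Specker observation and will take a short computation with coboundaries. For any $A\in\PP(G)=\PP(\V(X))$ and $s\in S$, an edge $(g,s)$ lies in $\delta A$ precisely when exactly one of $g$ and $gs$ is in $A$, which happens iff $g\in A\triangledown As^{-1}$. Consequently
$$\delta A \,\,=\,\, \bigvee_{s\in S}\,\bigl(\,(A\triangledown As^{-1})\times\{s\}\,\bigr).$$
Since $S$ is finite, $\delta A$ is finite iff $A\triangledown As^{-1}$ is finite for every $s\in S$, iff $A=_{\text a}As^{-1}$ (equivalently $A=_{\text a}As$) for every $s\in S$. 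To translate this into the statement $A\in\BB(G)$, the plan is to show by induction on the word length of $g\in G$ in $S\cup S^{-1}$ that $A=_{\text a}Ag$ for all $g\in G$: the induction uses that almost equality is an equivalence relation compatible with right translation, together with the identities $A=_{\text a}As\Rightarrow A=_{\text a}As^{-1}$ (multiply on the right by $s^{-1}$) and $(A=_{\text a}Ag_1\text{ and }A=_{\text a}Ag_2)\Rightarrow A=_{\text a}Ag_1 g_2$. This shows $\delta A$ finite iff $A$ is almost right $G$\d1 stable, i.e.\ $\BB(X)=\BB(G)$. The only place finiteness of $S$ is essential is in passing from ``$A\triangledown As^{-1}$ finite for each $s\in S$'' to ``$\delta A$ finite'', so I expect that to be where a reader might hesitate; the rest of the argument is bookkeeping.
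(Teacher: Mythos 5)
Your proof is correct and follows essentially the same route as the paper's: the identity $\delta A=\bigvee_{s\in S}\bigl((A\triangledown As^{-1})\times\{s\}\bigr)$ is the heart of both arguments. The only cosmetic differences are that the paper establishes connectedness by observing that the image of $1$ in $X/\E(X)$ is stabilized by $S$ and hence by $G$, where you build explicit paths from words in $S\cup S^{-1}$, and that you spell out the word-length induction that the paper compresses into ``since $S$ generates $G$, it follows that $A\in\BB(G)$.''
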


\begin{proof} Clearly, $X$ is a  nonempty, $G$-free $G$-graph, and $\V(X)= G$.
Also,  $X/\E(X)$ is a $G$-set with one $G$-orbit,
and the image of $1$ in  $X/\E(X)$ is stabilized by $S$.   Since $S$ generates $G$,  we see that
the image of $1$ in  $X/\E(X)$  is stabilized by $G$.  Hence, $X$ has exactly one component.

Now suppose that $S$ is finite.  Then $X$ is   locally finite and $G$-finite.
It remains to verify  that $\BB(X) = \BB(G)$.

Consider any $A  \in \PP(G)$.  For each $ s   \in   S$,
\begin{align*}
A \triangledown (As^{-1}) &= \{g \in G \mid g \in A - As^{-1}  \text { or } g \in As^{-1} -A \}
\\&= \{g \in G \mid g \in A, gs \in A^\comp \text { or } g \in A^\comp, gs \in A\},
\end{align*}   and, hence,
\begin{equation}\label{eq:cay}
A \triangledown (As^{-1})  =  \{g \in G \mid (g,s) \in \delta(A) \}.
\end{equation}

Suppose that $A \in \BB(G)$.  For each element $s$ of the finite set $S$,
 $ A \triangledown (As^{-1})$ is a finite set.  It follows from~\eqref{eq:cay} that $\delta(A)$ is a finite set,
that is, $A \in \BB(X)$.

Suppose that $A \in \BB(X)$.  For each $s \in S$, by~\eqref{eq:cay}, $ A \triangledown As^{-1} $ is a finite set,
that is, \mbox{$A =_{\text a} As^{-1}$.}  Since $S$ generates $G$, it follows that $A \in \BB(G)$.
\end{proof}

\section{Building trees from nested sets}\label{sec:4}

This section reviews results of Dunwoody~\cite{MJD} with modifications by Dicks and Dunwoody~\cite{DD}  and Roller~\cite{Roller}.

\begin{notation}\label{not:1}   Let $V$ be  any set,   and  $\EE$  be any subset
   of the Boolean algebra $\PP(V)$.

We say that $\EE$ is \textit{$\comp$-stable} if, for each $A \in  \EE$, we have $A^\comp \in \EE$.

We say that $\EE$ is \textit{finitely separating} if, for all $v$,  $w \in V$,
$v^{\ast\ast} \cap \EE =_{\text a} w^{\ast\ast} \cap \EE$;
note that
 $  v^{\ast\ast} \cap \EE  = \{d \in \EE \mid v \in d\}$.

We say that $\EE$ is \textit{nested} if, for each $(e,f) \in \EE \times \EE$,
$e$ and $f$ are nested in~$V$, that is,
\mbox{$\emptyset \in \{e \cap f, e \cap f^{\,\comp}, e^\comp \cap f, e^\comp \cap f^{\,\comp}\}$}.

 For each $e \in \EE$, we define
$$
\iota  e  \coloneq \{ d  \in \EE \mid d  \supseteq  e \text { or }
d  \supset  e^\comp\},
 \tau e \coloneq \{e,e^\comp\},  \text{ and }
\tau' e \coloneq  \{ d  \in \EE \mid d  \supset   e \text { or }
d  \supseteq  e^\comp\}.$$

If $\EE \ne \emptyset$, we define  $\T(\EE)$ to be the graph for which the edge set is $\EE$,
the vertex set is \mbox{$\{\iota e, \tau  e \mid e \in \EE\} \subseteq \PP(\PP(V))$},
and each $e \in \EE$ has initial
vertex  $\iota e$ and  terminal vertex  $\tau e$.

If $\EE \ne \emptyset$, we define
$\U(\EE)$ to be the graph for which the edge set is $\EE$, the vertex set is
 \mbox{$\{\iota e, \tau'  e \mid e \in \EE\} \subseteq \PP(\PP(V))$},
and  each $e \in \EE$ has initial
vertex  $\iota e$ and  terminal vertex  $\tau' e$.

If $\EE=\emptyset$,  we define both $\T(\EE)$ and $\U(\EE)$ to be the  graph for which the
  edge set is the empty set $\EE$ and the
vertex set is \mbox{$\{ \EE \} \subseteq \PP(\PP(V))$}.
\hfill\qed
\end{notation}

\begin{example} Let $T$ be a tree.  It is sometimes natural to think of the vertices of~$T$
as certain sets of edges of~$T$,
and it is sometime natural to think of the edges of~$T$ as certain sets of vertices of~$T$;
to achieve this formally, we create `double duals' of the edges of~$T$.
For each $e \in \E(T)$, we set
$$e^{\ast\ast} \coloneq \{v \in \V(T) \mid \text{the reduced $T$-path   from  $v$ to $\tau e$ crosses $e$}\};$$
then $e^{\ast\ast}$ is the vertex set of that component of $T{-}\{e\}$ which contains $\iota e$; hence,
  $\delta_T(e^{\ast\ast}) = \{e\}$; hence, $e^{\ast\ast} \in \BB(T)$.
Set \mbox{$\EE(T) \coloneq \{e^{\ast\ast} \mid e \in \E(T)\}$}.  For each \mbox{$e \in \E(T)$},
\begin{align*}
 \iota_{\,\U(\EE(T))}(e^{\ast\ast})
&=\{ d^{\,\ast\ast}  \in \EE(T) \mid d^{\,\ast\ast}  \supseteq  e^{\ast\ast} \text { or }
d^{\,\ast\ast}  \supset  (e^{\ast\ast})^\comp\}
\\&= \{d^{\,\ast\ast} \in \EE(T) \mid \iota_T e \in d^{\,\ast\ast}\} = (\iota_T e)^{\ast\ast} \cap \EE(T).
\end{align*}
There is a natural identification $T = \U(\EE(T))$.\hfill\qed
\end{example}

The following is due to Dunwoody~\cite[2.1]{MJD} with modifications from~\cite[II.1.5]{DD}.
The proof given here incorporates the approach of Roller~\cite{Roller}.

\begin{theorem}\label{thm:Dunwoody} With {\normalfont Notation~\ref{not:1}},
if $V$ is  any set and $\EE$ is any  $\comp$-stable,  finitely separating, nested subset of~$ \PP(V) $
such that $\emptyset \not\in\EE$, then  the following hold.
\begin{enumerate}[\normalfont (i)]
\item \label{it:2} $\T(\EE)$ is a  tree with edge  set $\EE$, and,
 for any \mbox{$e, f \in \EE$},   the
$\T(\EE)$\d1distance between $\iota e$ and $\iota f$    equals
$\abs{(\iota e)\triangledown(\iota f)}$.
\item \label{it:3} There exists a natural map $V \to \V(\T(\EE))$, $v \mapsto v^{\ast\ast}\cap \EE$.
In detail,  if $\EE \ne \emptyset$, then, for each $v \in V$\!,
there exists some  $\subseteq$-minimal element $e$ of $v^{\ast\ast}\cap \EE$, and then
$v^{\ast\ast}\cap \EE  = \iota e$.
\end{enumerate}
\end{theorem}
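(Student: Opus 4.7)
My plan is to establish (ii) first and exploit its uniqueness in the proof of (i).

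For (ii), note first that $V\notin\EE$: otherwise $\emptyset=V^\comp\in\EE$ by $\comp$-stability, contradicting the hypothesis. So every $e\in\EE$ is proper and nonempty, and one of $e,e^\comp$ lies in $v^{\ast\ast}$ for each $v\in V$, showing $v^{\ast\ast}\cap\EE\ne\emptyset$ when $\EE\ne\emptyset$. A $\subseteq$-minimal element is then produced by ruling out infinite strictly descending chains $e_1\supsetneq e_2\supsetneq\cdots$ in $v^{\ast\ast}\cap\EE$: picking any $w\in V-e_1$ (possible since $V\notin\EE$) gives $w\in e_i^\comp$ for all $i$, so each $e_i$ separates $v$ from $w$ and the chain lies in the finite set $(v^{\ast\ast}\triangledown w^{\ast\ast})\cap\EE$. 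For the equality $v^{\ast\ast}\cap\EE=\iota e$ at such a minimal $e$, the inclusion $(\supseteq)$ is a direct case split: if $d\supseteq e\ni v$ then $v\in d$, while if $d\supsetneq e^\comp$ then $d^\comp\subsetneq e$, so $v\in d^\comp$ would make $d^\comp$ a strictly smaller element of $v^{\ast\ast}\cap\EE$ than $e$. For $(\subseteq)$, the four nestedness cases between $d\in v^{\ast\ast}\cap\EE$ and $e$ each yield either $d\in\iota e$ or a direct contradiction (either $v\in d\cap e=\emptyset$, or $d\subsetneq e$ violates minimality of $e$).

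For (i), I would first record two structural observations about $\iota$-vertices. Each $\iota e$ is a ``coherent orientation'' of $\EE$: for every $h\in\EE$, exactly one of $h,h^\comp$ lies in $\iota e$ (``both in'' forces $e\in\{\emptyset,V\}$ and is excluded; ``at least one'' follows from nestedness of $h$ with $e$). Consequently $\iota e\triangledown\iota f$ is always a union of complementary pairs, of even cardinality. Moreover $|\iota e\triangledown\iota f|<\infty$: when $e\subsetneq f$, every $d$ in the symmetric difference separates any chosen $v\in e$ from any chosen $w\in f^\comp$, hence lies in the finite set $(v^{\ast\ast}\triangledown w^{\ast\ast})\cap\EE$; the general case reduces here by swapping $f$ with $f^\comp$, which changes $\iota f$ only by the two-element set $\{f,f^\comp\}$.

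The tree structure itself rests on the fact that $\tau e=\{e,e^\comp\}$ is never an $\iota$-vertex (else $e^\comp\in\iota e$, breaking orientation), which forces reduced paths to strictly alternate $\iota$- and $\tau$-vertices, with reducedness at every $\tau$-vertex dictating the next edge to be the complementary one. A reduced path from $\iota e$ to $\iota f$ therefore corresponds to a sequence $g_1,\ldots,g_m$ in $\EE$ with $\iota g_1=\iota e$, $\iota g_{i+1}=\iota g_i^\comp$, $\iota g_m^\comp=\iota f$, and $g_{i+1}\ne g_i^\comp$, of length $n=2m$. Telescoping the identity $\iota g_i\triangledown\iota g_i^\comp=\{g_i,g_i^\comp\}$ gives $\iota e\triangledown\iota f=\{g_1,g_1^\comp\}\triangledown\cdots\triangledown\{g_m,g_m^\comp\}$, so $|\iota e\triangledown\iota f|\le n$, the required lower bound on path lengths. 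For the matching upper bound I would induct on $k=|\iota e\triangledown\iota f|$: when $k>0$, take $g$ to be a $\subseteq$-minimal element of $\iota e-\iota f$; a short lifting argument (any $h'\in\iota e\cap\iota f$ with $h'\subsetneq g$ would satisfy $h'\supseteq f$ or $h'\supsetneq f^\comp$, which pass to $g\supseteq h'\supseteq f$ or $g\supsetneq f^\comp$, forcing $g\in\iota f$) shows $g$ is already $\subseteq$-minimal in all of $\iota e$. The (ii)-style argument, applied to the abstract vertex $\iota e$ (which is an orientation closed under $\subseteq$-supersets in $\EE$), then gives $\iota g=\iota e$, so that traversing $g$ and then $g^\comp$ reaches $\iota g^\comp$ with $|\iota g^\comp\triangledown\iota f|=k-2$. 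Absence of nontrivial reduced closed paths follows from a minimal-counterexample argument: the parity condition $\emptyset=\{g_1,g_1^\comp\}\triangledown\cdots\triangledown\{g_m,g_m^\comp\}$ forces some pair $\{g_j,g_j^\comp\}$ to recur, and excising the intervening segment produces a strictly shorter reduced closed path of positive length, with the degenerate extremes handled directly by reducedness or by $\iota g\ne\iota g^\comp$.

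The main obstacle is the promotion of a $\subseteq$-minimal element of $\iota e-\iota f$ to a $\subseteq$-minimal element of $\iota e$, together with the companion fact that any such minimum $g$ automatically satisfies $\iota g=\iota e$; without these, the distance-decreasing induction could expend a step without actually approaching $\iota f$. Both rely on the upward propagation of the defining containments of $\iota e$ under the subset order.
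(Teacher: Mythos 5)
Your proof is correct, and your part (ii) is essentially the paper's argument: a $\subseteq$-minimal element of $v^{\ast\ast}\cap \EE$ exists because finite separation forbids infinite descending chains, and the two inclusions for $v^{\ast\ast}\cap\EE=\iota e$ are the same case analysis on nestedness and minimality. For part (i), however, you take a genuinely different route. The paper works in the auxiliary graph $\U(\EE)$ (edge $e$ running from $\iota e$ to $\tau'e=\iota(e^\comp)$), decomposes $\iota e-\iota f$ into the four intervals $[e,f[\,$, $[e,f^\comp]$, $]e^\comp,f[\,$, $]e^\comp,f^\comp]$ following Roller, and introduces the immediate-successor relation $e\prec f$ so that restricted paths become unrefinable increasing chains $e_1\prec\cdots\prec e_n$; acyclicity is then immediate from monotonicity ($e_1\subseteq e_n$ forbids closing up) and connectivity from the finiteness of $[e',f']$. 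Your orientation-based treatment replaces all of this: the telescoping identity $\iota e\triangledown\iota f=\{g_1,g_1^\comp\}\triangledown\cdots\triangledown\{g_m,g_m^\comp\}$ gives the lower bound on distance; the promotion of a $\subseteq$-minimal element of $\iota e-\iota f$ to a $\subseteq$-minimal element of $\iota e$, combined with the abstract form of the (ii)-argument (an orientation closed under supersets with minimal element $g$ equals $\iota g$), drives the upper-bound induction and simultaneously yields connectivity; and parity plus excision kills reduced circuits. Both arguments are sound; the paper's $\prec$-calculus makes acyclicity essentially free and concentrates the set-theoretic work in one computation of $\iota e-\iota f$, while yours avoids introducing $\U(\EE)$ and $\prec$ at the cost of the two auxiliary lemmas about minimal elements of orientations. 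One point to spell out when you write this up: your claim that, for $e\subsetneq f$, a single pair $v\in e$, $w\in f^\comp$ is separated by every member of $\iota e\triangledown\iota f$ is true, but only because three of the four a priori cases for $d\in\iota e-\iota f$ (namely those involving $d\supsetneq e^\comp$ or $d^\comp\supseteq f$) are incompatible with $e\subsetneq f$, $\emptyset\not\in\EE$, $V\not\in\EE$, so that $\iota e-\iota f$ collapses to the single interval $[e,f[\,$ --- which is in effect the same computation the paper carries out in general.
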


\begin{proof} The case where $\EE = \emptyset$ is straightforward, and we shall assume that $\EE \ne \emptyset$.

 \eqref{it:2}.  Here, $\U(\EE)$ is the graph for which the edge set is $\EE$,
the vertex set is \mbox{$\{\iota e \mid e \in \EE\}$}, and each $e \in \EE$  has initial vertex $\iota e$ and
terminal vertex $\tau' e = \iota \hskip.5pt(e^\comp)$.
For each \mbox{$e \in \E(\U(\EE)) = \EE$,} \mbox{$e^\comp \ne e$},
\mbox{$e^{\comp\comp} = e$},  \mbox{$\iota ( e^\comp) = \tau (e)$}, and \mbox{$\tau (e^\comp) = \iota (e)$}.
By a    \textit{restricted $\U(\EE)$-path,}
we shall mean any sequence   \mbox{$p=e_{ \bbl1 1{\uparrow}n\bbr1 }$} in $\EE$
such that $n \in [1{\uparrow}\infty[\,$  and,
for each $i \in [2{\uparrow}n]$,
\mbox{$\iota (e_i) = \tau (e_{i-1})$}  and $e_{i} \ne  e_{i-1}^\comp$.

For each $e \in \EE$,
$(\iota e)\triangledown(\iota (e^\comp)) = \{e,e^\comp\}$,
and  $\iota e$  is a $\comp$-transversal in $\EE$, or `orientation', in the sense of a subset
 $\mathcal{O}$ of $\EE$ such that
 $\EE = \mathcal{O}\, \vee  \{f^\comp \mid f\in \mathcal{O}\}$.

Consider any $e$, $f \in \EE$.
Set $[e,f[\,\,\, \coloneq \{d \in \EE  \mid e \subseteq d \subset f\}$,
and   define $[e,f]$,\,\, $]e,f]$ and $]e,f[$\, analogously.
These sets are finite by the finitely separating condition,
since we may choose $v\in e$ and $w \in f^\comp$, and find that $[e,f] \subseteq v^{\ast\ast} {-} w^{\ast\ast}$.
We write $e \prec f$ to mean  $[e,f[\,\,  = \{e\}$ or, equivalently, $]e,f]  = \{f\}$.
Now
\begin{align*}
\iota e &=\{ d  \in \EE \mid e  \subseteq  d \text { or } e^\comp  \subset  d \}, \qquad
\EE - \iota  f  = \{ d \in \EE \mid  d  \subset  f \text { or } d  \subseteq  f^\comp\},
\end{align*}
and also $e=f$ or $e \subset f$ or $e \subseteq f^\comp$ or
 $e^\comp \subset f$ or $e^\comp \subset  f^\comp$.
We then see that $$\iota e - \iota f = \iota e \cap (\EE - \iota  f )
=  [e,f[ \,\,\,\cup\,\,\, [e, f^\comp]\,\,\,\cup\,\,\,\,\,\,]e^\comp,f[ \,\,\,\cup\,\,\, ]e^\comp,f^\comp],$$
and that
the latter union is empty if and only if  $e=f$ or $e^\comp \prec f$.
Thus, $ \iota e \subseteq \iota f$ if and only if $e=f$ or $e^\comp \prec f$.
Since $([e^\comp, f[\,)^\comp = [f^{\,\comp}, e[\,$, the condition $e^\comp \prec f$
is invariant under interchanging $e$ and $f$, and we see that
$\iota e=\iota f$ if and only if    $e=f$ or $e^\comp \prec f$.
By interchanging $e$ and~$e^\comp$, we see that $\tau' e=\iota f$ if and only if
$f = e^\comp$ or   $e \prec f$.  (This paragraph
is based on the elegant presentation of Roller \cite[$\S2.6$-$\S2.7$]{Roller}
and is simpler than the discussion in  \mbox{\cite[II.1.5]{DD}}.)

A  restricted $\U(\EE)$-path \mbox{$p= e_{\bbl1 1{\uparrow}n\bbr1 }$,
$n \in [1{\uparrow}\infty[\,$,}
may then be viewed as an unrefinable increasing
 sequence
\mbox{$e_1 \prec e_2 \prec \cdots \prec e_n$} in $\EE$.
Since $e_1 \subseteq e_n$, neither
$e_1 = e_n^\comp$ nor $e_n \prec e_1$ are possible; thus,  $\tau' e_n \ne \iota e_1$.
Hence,  in $\U(\EE)$,  no vertex is joined to itself by a restricted $\U(\EE)$-path.

We shall now see that,  in $\U(\EE)$, any vertex is joined to any other vertex by a  restricted $\U(\EE)$-path.
By the nestedness of $\EE$, for any   $e, f \in \EE$,
there exist \mbox{$e' \in \{e,e^\comp\}$} and \mbox{$f' \in \{f, f^\comp\}$} such that \mbox{$e' \subseteq f'$}.
Since the set $[e',f']$ is finite, there exists some
 unrefinable increasing
 sequence
$$e'=e_1 \prec e_2 \prec \cdots \prec e_n=f' \text{ in } \EE,   n \in [1{\uparrow}\infty[\,;$$ this gives
a  restricted $\U(\EE)$-path which meets the vertices of $e$ and $f$, as desired.

We may pass from the  graph  $\U(\EE)$ to the  graph $\T(\EE)$ by
detaching each edge from its terminal vertex and giving the elements of
each unordered pair of edges $\{e,e^\comp\}$, $e \in \EE$,
a new common terminal vertex.  Hence, $\T(\EE)$ is a tree.

The $\T(\EE)$-distance  formula follows since, for each \mbox{$e \in \EE$},
 \mbox{$(\iota e)\triangledown(\iota (e^\comp)) = \{e,e^\comp\}$} and
 $(\iota e, e, \tau e, (e^\comp)^{-1}, \iota (e^\comp))$
is a reduced $\T(\EE)$-path.

\eqref{it:3}.
We show first that $v^{\ast\ast} \cap \EE$ has $\subseteq$-minimal elements.
Since $\EE \ne \emptyset$, there exists some $f \in \EE$.
We may assume
that~$v \in f$, for otherwise we may replace $f$ with \mbox{$f^{\,\comp}$}.
 Since $\emptyset \not\in \EE$,  there exists some   \mbox{$w \in f^{\,\comp}$}, and we have
$$f \in \{ e \in \EE \mid v \in e  \subseteq f \}
\subseteq \{ e \in \EE \mid v \in e, w \in e^\comp\} = (v^{\ast\ast}\cap \EE)-(w^{\ast\ast} \cap \EE).$$
The latter set is finite, since $\EE$ is finitely separating.
Thus \mbox{$ \{ e \in \EE \mid v \in e \subseteq f \}$} is finite and nonempty,
and hence has a $\subseteq$-minimal element, which  is then a $\subseteq$-minimal element of
  \mbox{$\{ e \in \EE \mid v \in e \}$}, as desired.

 Let $e$ be a $\subseteq$-minimal element of $v^{\ast\ast} \cap \EE$.  We shall show   that $v^{\ast\ast} \cap \EE = \iota e$.

Let $d \in \iota e$.  Then either $d \supseteq e$ or $e \supset d^{\,\comp}$.
If $d \supseteq e$ then $d \supseteq e \supseteq \{v\}$ and, hence,
$d \in v^{\ast\ast} \cap \EE$.   If $e \supset d^{\,\comp}$, then, by the $\subseteq$-minimality of $e$,
$v \in (d^{\,\comp})^\comp = d$, and, hence  $d \in v^{\ast\ast} \cap \EE$.
Thus, $\iota e \subseteq  v^{\ast\ast} \cap \EE$.

Conversely, suppose that $d \in \EE - \iota e$.  Then $d^{\,\comp} \in \iota e \subseteq v^{\ast\ast} \cap \EE$.
Hence,    $d  \in \EE - v^{\ast\ast}$. Thus, $\EE - \iota e \subseteq  \EE - v^{\ast\ast}$.

Now $v^{\ast\ast} \cap \EE = \iota e$, as desired.
\end{proof}

\begin{corollary}\label{cor:Dunwoody} Let $V$ be  any set,   and
 $\EE $ be any   finitely separating, nested subset of~$ \PP(V) $
such that $\emptyset \not\in\EE $, $V \not\in\EE $, and, for each $e \in \EE $, $e^\comp \not\in \EE $.
With {\normalfont Notation~\ref{not:1}},  the following hold.
\begin{enumerate}[\normalfont (i).]
\item \label{it:2a} $\U(\EE)$ is a  tree with edge  set $\EE$, and,
 for any $v$, $w \in \EE$,   the
$\U(\EE)$-distance between $v$ and $w$    equals
$\abs{v\triangledown w}$.
\item \label{it:3a} There exists a natural map $V \to \V(\U(\EE))$, $v \mapsto v^{\ast\ast}\cap \EE$.
\end{enumerate}
\end{corollary}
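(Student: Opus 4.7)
The plan is to reduce Corollary~\ref{cor:Dunwoody} to Theorem~\ref{thm:Dunwoody} by enlarging $\EE$ to its $\comp$-closure $\EE^{\pm} \coloneq \EE \cup \{e^{\,\comp} : e \in \EE\}$ and then extracting $\U(\EE)$ as a quotient of the tree $\T(\EE^{\pm})$. First I would verify that $\EE^{\pm}$ satisfies the hypotheses of Theorem~\ref{thm:Dunwoody}: it is $\comp$-stable by construction; it excludes $\emptyset$ since $\emptyset, V \notin \EE$; and nestedness is immediate because the four-way condition is symmetric under complementation of either argument. Finitely-separability reduces to that of $\EE$ via
\[ v^{\ast\ast} \cap \EE^{\pm} \,=\, (v^{\ast\ast} \cap \EE) \,\vee\, \{e^{\,\comp} : e \in \EE - v^{\ast\ast}\}, \]
which yields $\abs{(v^{\ast\ast} \cap \EE^{\pm}) \triangledown (w^{\ast\ast} \cap \EE^{\pm})} = 2\abs{(v^{\ast\ast} \cap \EE) \triangledown (w^{\ast\ast} \cap \EE)}$. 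Theorem~\ref{thm:Dunwoody} then supplies the tree $T \coloneq \T(\EE^{\pm})$.

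Next I would identify $\U(\EE)$ with the quotient of $T$ obtained by collapsing the ``middle'' vertices $\{e,e^{\,\comp}\}$ for $e \in \EE$. Write $\iota'$ for the $\iota$-operation of Notation~\ref{not:1} applied to $\EE^{\pm}$ in place of $\EE$. The core lemma is that, for every $e \in \EE^{\pm}$, the set $\iota' e$ is a $\comp$-transversal of $\EE^{\pm}$: it contains exactly one of $d$ and $d^{\,\comp}$ for each $d \in \EE$. This follows from case analysis on which of $d \cap e$, $d \cap e^{\,\comp}$, $d^{\,\comp} \cap e$, $d^{\,\comp} \cap e^{\,\comp}$ is empty (using the nestedness of $\EE$), together with the exclusion of $\emptyset$ and $V$ from $\EE$ at the boundary cases. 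In particular, the side vertices $\{\iota' e : e \in \EE^{\pm}\}$ of $T$ are disjoint from the middle vertices, and the assignment $\iota' e \mapsto \iota' e \cap \EE$ is a well-defined bijection from the former onto $\V(\U(\EE))$; unwinding definitions gives $\iota' e \cap \EE = \iota e$ and $\iota'(e^{\,\comp}) \cap \EE = \tau' e$ for every $e \in \EE$. Each middle vertex $\{e,e^{\,\comp}\}$ has valence~$2$, being incident only to the two edges $e$ and $e^{\,\comp}$; collapsing it merges this pair into a single edge indexed by a chosen representative in $\EE$. Since collapsing valence-$2$ vertices of a tree yields a tree, the quotient is a tree, and the identifications above make it precisely $\U(\EE)$.

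The distance formula in~(i) then follows: since $\iota' e$ and $\iota' f$ are $\comp$-transversals, $\iota' e \triangledown \iota' f$ is $\comp$-closed in $\EE^{\pm}$, so $\abs{\iota' e \triangledown \iota' f} = 2\abs{(\iota' e \cap \EE) \triangledown (\iota' f \cap \EE)}$; combining the $T$-distance formula of Theorem~\ref{thm:Dunwoody} with the halving effect of the collapse yields the $\U(\EE)$-distance formula. Part~(ii) is immediate, since $v^{\ast\ast} \cap \EE$ factors as the natural map of Theorem~\ref{thm:Dunwoody}(ii) into $\V(T)$ followed by intersection with $\EE$. The main obstacle will be the $\comp$-transversal lemma, which requires careful handling of the four nestedness alternatives and the degenerate cases where the boundary conditions $\emptyset, V \notin \EE$ become relevant; once that lemma is in hand, the rest of the proof is essentially bookkeeping.
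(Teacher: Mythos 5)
Your proposal is correct and follows essentially the same route as the paper: the paper's proof is exactly to form $\EE\vee\EE^\comp$, apply Theorem~\ref{thm:Dunwoody} to get the tree $\T(\EE\vee\EE^\comp)$, and observe that $\T(\EE\vee\EE^\comp)/\EE^\comp=\U(\EE)$ (equivalently, that $\U(\EE)$ is obtained by choosing the orientation $\EE$). Your ``$\comp$-transversal'' lemma for $\iota' e$ is already recorded inside the proof of Theorem~\ref{thm:Dunwoody}, so the extra verifications you flag as the main obstacle are in fact already available.
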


\begin{proof} Set \mbox{$\EE^\comp \coloneq \{e^\comp \mid e \in \EE\}$.}
By Theorem~\ref{thm:Dunwoody}, $\T(\EE\vee \EE^\comp)$ is a tree with edge set
$\EE\vee \EE^\comp$. Now \mbox{$\T(\EE\vee \EE^\comp)/\EE^\comp = \U(\EE)$,} and the result follows.
(Alternatively, \mbox{$\U(\EE) = \U(\EE \vee \EE^\comp) - \EE^\comp $}, the tree  obtained from
  $\U(\EE \vee \EE^\comp)$ by choosing the  orientation  $\EE$.)
\end{proof}

\section{Building nested sets from graphs}\label{sec:5}

We now review   theory developed by Bergman in~\cite{GMB1}.

\begin{definitions}\label{defs:10}
We introduce a new symbol $\ttt$, and view the power-series ring~$\integers[[\ttt]]$
 as an ordered abelian group
 with the total order~$\sqsubset$  such that
$$\textstyle\sum\limits_{\ell\in[0{\uparrow}\infty[} c_\ell \ttt^\ell
\, \,\,\sqsubset\,  \sum\limits_{\ell\in[0{\uparrow}\infty[} d_\ell \ttt^\ell$$
if and only if there exists some \mbox{$\ell_0 \in [0{\uparrow}\infty[\,$}
such that \mbox{$c_{\ell_0} < d_{\ell_0}$} and, for each
\mbox{$\ell \in [0{\uparrow} \ell_0 [\,$},
\mbox{$c_\ell = d_\ell$}. We view the polynomial ring $\integers[\ttt]$ as a subset of~$\integers[[\ttt]]$.

 Let $X$  be any connected, locally finite graph.

For any set   $P$ of $X$-paths with the property that, for each $\ell \in [0{\uparrow}\infty[\,$,
 the set $P_\ell := \{p \in P : \length(p) = \ell\}$  is finite,
 we  write
$$\Sigma(P) \coloneq  \textstyle\sum\limits_{p \in P} \ttt^{\length(p)}
= \sum\limits_{\ell \in [0{\uparrow}\infty[} \abs{P_\ell} \ttt^\ell
\in \integers[[\ttt]].$$

For any element   $A$ of $\BB(X)$,
we let $\P(A)$ denote the set of all $X$-paths which begin in $A$ and end in $A^\comp$, necessarily crossing
 $\delta A$.
Since $X$ is locally finite and $\delta A$ is finite,  we see that
$\P(A)$ has only finitely many elements of any given length.
We write
\mbox{$\SP(A) \coloneq \Sigma(\P(A)).$}  Inversion of paths carries
$\P(A)$ bijectively to $\P(A^\comp)$; hence,
 \mbox{$\SP(A)   = \SP(A^\comp).$}
We write $$\SP(\BB(X)) \coloneq \{\SP(A) \mid A \in \BB(X)\} \subseteq \integers[[\ttt]].$$

For any   Boolean  subalgebra  $\AA$ of $\BB(X)$, any element $C$ of  $\AA$
  is said to be \textit{$\AA$-reducible}
if \vspace{-2mm} $$C \in \gen{ \{D \in \AA : \SP(D)\, \sqsubset \,\SP(C) \}}_{{\BB}};$$
otherwise, $C$ is said to be \textit{$\AA$-irreducible}.
We let $\irr(\AA)$ denote the set of all $\AA$-irreducible elements of~$\AA$.
Notice that $\emptyset$ and $\V(X)$ are   $\AA$-reducible.
\hfill\qed
 \end{definitions}

The following is the $G$-finite case of a result of Bergman~\cite[Lemma~2]{GMB1}.

\begin{theorem}\label{th:berg3} If $X$ is any connected, locally finite, $G$-finite $G$-graph,
 then $\SP(\BB(X))$ is a well-ordered subset of \,$\integers[[\ttt]]$.
\end{theorem}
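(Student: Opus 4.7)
The plan is to show that every strictly descending sequence in $\SP(\BB(X))$ is finite. Suppose for contradiction that we had $A_0, A_1, A_2, \ldots \in \BB(X)$ with $\SP(A_0) \sqsupset \SP(A_1) \sqsupset \SP(A_2) \sqsupset \cdots$. First I would record basic facts about $\SP(A) = \sum_\ell |P_\ell(A)| \ttt^\ell$: a length-$0$ path is a single vertex which cannot lie in both $A$ and $A^\comp$, giving $[\ttt^0]\SP(A) = 0$; a length-$1$ path in $\P(A)$ corresponds to an edge of $\delta A$ traversed in the orientation from $A$ to $A^\comp$, so $[\ttt^1]\SP(A) = |\delta A|$; and more generally any path in $\P(A)$ must cross $\delta A$ at least once while all its vertices lie within $X$-distance $\ell$ of a vertex of some crossed edge, so a local-finiteness bound $M$ on vertex valence in $X$ gives $[\ttt^\ell]\SP(A) \leq |\delta A| \cdot M^\ell$.

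Next I would inductively extract stabilization of the coefficients. The sequence $(|\delta A_n|)_n$ is a non-increasing sequence of non-negative integers, hence eventually constant at some value $n_1$; passing to a tail, assume $|\delta A_n| = n_1$ throughout. Then $[\ttt^2]\SP(A_n)$ is non-increasing and bounded, so stabilizes; iterating, for each $\ell$ there is a threshold $N(\ell)$ such that $[\ttt^\ell]\SP(A_n)$ equals a fixed constant $c_\ell$ for every $n \geq N(\ell)$. Strictness of descent forces, for each $n$, some exponent $\ell(n)$ with $[\ttt^{\ell(n)}]\SP(A_n) > c_{\ell(n)}$, and necessarily $\ell(n) \to \infty$.

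The crux of the argument then uses $G$-finiteness. The structural observation is that $[\ttt^\ell]\SP(A)$ is determined by the labeled $\ell$-neighborhood in $X$ of $\delta A$, namely the subgraph induced on vertices of $X$-distance at most $\ell$ from some endpoint of an edge in $\delta A$, together with the partition of those vertices into $A$ and $A^\comp$. For fixed $|\delta A| = n_1$ the edges of $\delta A$ partition into at most $n_1$ clusters (two edges grouped when their $X$-distance is at most $2\ell$), each lying in a ball of radius depending only on $\ell$ and $n_1$; by $G$-finiteness combined with local finiteness there are only finitely many $G$-orbits of pointed bounded-radius subgraphs, hence finitely many possible cluster types, and distant clusters contribute independently to the count $|P_\ell(A)|$. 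Thus $[\ttt^\ell]\SP(A)$ is determined by the finitely-valued multiset of cluster types of $A$. Combined with the freedom to translate each $A_n$ by a $G$-element without altering $\SP(A_n)$, a diagonal extraction using these finiteness constraints produces a subsequence along which the entire series $\SP(A_n)$ is eventually constant, contradicting strict descent. The main obstacle is making the cluster decomposition rigorous and arranging the diagonal extraction so that all coefficients stabilize simultaneously; this is where the $G$-finite hypothesis is doing its real work.
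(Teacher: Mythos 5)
Your overall strategy --- decompose $\delta A_n$ into a bounded number of ``clusters'', use $G$-finiteness together with local finiteness to see that only finitely many cluster types occur, and extract a subsequence on which the configuration stabilizes --- is essentially the paper's strategy, which writes each coboundary $S_n=\delta A_n$ as a union $\bigcup_{i=1}^{k} g_{n,i}R_i$ of $G$-translates of finitely many fixed pieces and inducts downward on $k$. But as written your argument has a real gap at exactly the point you flag. The coefficient-wise stabilization from your second paragraph cannot by itself contradict strict descent: the sequence $\ttt \sqsupset \ttt^2 \sqsupset \ttt^3 \sqsupset \cdots$ is strictly descending even though every coefficient is eventually constant. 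And the diagonal extraction you propose can only deliver coefficient-wise information again, because your clusters are defined at scale $\ell$: for each fixed $\ell$ you get finitely many possible $\ell$-cluster configurations and hence stabilization of $[\ttt^{j}]\SP(A_n)$ for $j\le\ell$ along a subsequence, but the number of types grows with $\ell$, and nothing in the extraction prevents the diameter of $\delta A_n$ from growing without bound with $n$ --- which is precisely the scenario (deviations pushed to ever higher exponents) that must be excluded.

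The missing idea is a mechanism by which the order $\sqsubset$ controls the geometry of the clusters uniformly in $\ell$. In the paper this is the observation that if $\delta A_n$ decomposes into $k\ge 2$ translated pieces and $d_n$ is the minimal length of a path crossing two of them, then the path count agrees with the sum of the pieces' independent contributions in all degrees below $d_n$ and drops strictly below that sum in degree $d_n$ (a shortest path meeting two pieces is counted once rather than twice). Comparing with $\SP(A_m)\sqsubseteq\SP(A_n)$ for $m\ge n$ forces $d_m\le d_n$, so the minimal inter-piece distance is non-increasing and hence constant along a subsequence; local finiteness then leaves only finitely many relative positions of the two closest pieces, so along a further subsequence they merge into a single piece, and one inducts down to $k=1$, where $\SP(A_n)=\SP(R_1)$ is constant on an infinite subset --- the desired contradiction. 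Without some such monotonicity statement tying $\sqsubset$ to the cluster separation, the diagonal extraction cannot close the argument; you should either prove this distance-monotonicity for your clusters or restructure the end of the proof around it.
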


\begin{proof} We shall show that a larger subset of $\integers[[\ttt]]$ is well-ordered.

Let $\mathbb{S}$   denote
the set of all finite subsets of $\E(X)$.

Consider any \mbox{$S \in \mathbb{S}$}.  We denote by $\P(S)$   the set of all
those $X$-paths that  cross $S$  an \textit{odd} number of times.
For each $\ell \in [0{\uparrow}\infty[$\,, we denote by $\P_{\ell}(S)$   the set of all elements of $\P(S)$
whose length equals~$\ell$.
Since $S$ is finite and $X$ is locally finite,  $\P_\ell(S)$ is finite; $\vert\P_\ell(S)\vert$ is an even number since
$\P_\ell(S)$  is stable under path inversion.  Clearly,
$\vert \P_0(S) \vert = 0$ and $\vert \P_1(S) \vert = 2 \vert S \vert$.
   We  write
 $$\SP(S) \coloneq \Sigma(\P(S))=
 \textstyle\sum\limits_{\ell \in [0{\uparrow}\infty[} \abs{\P_{\ell}(S)}\ttt^\ell
\,\,\text{and} \,\,
 \SP(\mathbb{S}) \coloneq \{\SP( S ) : S\in \mathbb{S}\} \subseteq 2\ttt\integers[[\ttt]].\vspace{-1mm}$$

For any $A \in \BB(X)$,  we have $\delta A \in \mathbb{S}$,    and  $\P(\delta A)$,
the set  of $X$-paths that cross $\delta A$ an odd number of times,
equals \mbox{$\P(A ) \vee \P(A^\comp )$}; hence,
\mbox{$\SP(\delta A) = 2\SP(A)$.}  Thus, \mbox{$2 \SP(\BB(X)) \subseteq \SP(\mathbb{S})$}, and it suffices to
show that \mbox{$\SP(\mathbb{S})$}
 is  well\d1ordered.

Consider any map \mbox{$S_{-}:[0{\uparrow}\infty[ \,\,\to \,\mathbb{S}$}, \mbox{$n \mapsto S_n$},
such that the composite map
\mbox{$\SP(S_{-}):[0{\uparrow}\infty[ \,\,\to \,\integers[[\ttt]]$}, \mbox{$n \mapsto \SP(S_n)$},
is decreasing.    It suffices to show
 that there exists some infinite subset $\mathbf{N}$ of $[0{\uparrow}\infty[$\,
such that \mbox{$\{\SP(S_n) \mid n \in \mathbf{N}\}$} has exactly one element.
Without loss of generality, we may assume that, for each $n \in [0{\uparrow}\infty[\,$,   $S_n \ne \emptyset$.

Let $\mathbf{K}$ denote the set consisting of those $k \in [1{\uparrow}\infty[$\,
for which there exist \vspace{.5mm}
\begin{align}
&\text{an infinite subset $\mathbf{N}$ of  $[0{\uparrow}\infty[\,$,
a map  $[1{\uparrow}k] \to \mathbb{S} {-}\{\emptyset\}$, $i \mapsto R_i$, and a map}\label{eq:k}
\\[-2mm]&\text{\mbox{$\mathbf{N} \times [1{\uparrow}k] \to G$}, $(n,i)  \mapsto g_{n,i}$, such that,
for  each $n \in \mathbf{N}$,
  $S_{n} = \textstyle\bigcup\limits_{i=1}^{k} g_{n,i}R_i$.}\nonumber
\end{align}
In the case where $k=1$,
$$\{\SP(S_n) \mid n \in \mathbf{N}\} = \{\SP(g_{n,1}R_1) \mid n \in \mathbf{N}\} = \{\SP(R_1) \},$$
which gives the desired result.
We shall show that $\mathbf{K}\ne\emptyset$
and  that,  for each
$k\in\mathbf{K}$,  either $k=1$ or $k{-}1\in\mathbf{K}$. This implies that \mbox{$1 \in \mathbf{K}$},
which completes the proof.

We now show that \mbox{$\vert S_0 \vert \in \mathbf{K}$}, and, hence, \mbox{$\mathbf{K}\ne\emptyset$}.
Let us choose a finite $G$-transversal $R$ in the $G$-finite $G$-set~$\E(X)$.
Consider any \mbox{$n \in [0{\uparrow}\infty[$\,}.  Since \mbox{$n \ge 0$} and $\SP(S_{-})$  is decreasing,
\mbox{$\SP(S_n) \,\sqsubseteq\, \SP(S_0)$}.  Hence,
$$  2\vert S_n\vert  \ttt \,\sqsubseteq\, \SP(S_n) \,\sqsubseteq\, \SP(S_0) \,\sqsubset\,
2 (\vert S_0\vert +1) \ttt.$$
Thus, \mbox{$ \vert S_n \vert <   \vert S_0 \vert + 1$}.
Set \mbox{$k:= \vert S_0 \vert$}.  Then \mbox{$1 \le \vert S_n \vert \le k$},
and we may choose a surjective map
\mbox{$[1{\uparrow}k] \to S_n$},
\mbox{$i \mapsto s_{n,i}$}. For  each \mbox{$i \in [1{\uparrow}k]$},
 there exists a unique   \mbox{$r_{n,i} \in R$}
such that \mbox{$s_{n,i} \in Gr_{n,i}$}, and we may choose some \mbox{$g_{n,i}  \in G$}
  such that $g_{n,i} r_{n,i} = s_{n,i}$.
We have a map
 \mbox{$r_{n,-}:[1{\uparrow}k] \to R, i\mapsto r_{n,i}$}.
Since $$\vert\{r_{n,-} \mid n \in [0{\uparrow}\infty[\,\,\}\vert \le \vert R \vert^k < \omega_0,$$
there exists some infinite subset $\mathbf{N}$ of
\mbox{$[0{\uparrow}\infty[$} and some map \mbox{$r_{-}:[1{\uparrow}k] \to R$}, $i \mapsto r_i$,
 such that, for each \mbox{$n \in \mathbf{N}$},  $r_{n,-} = r_-$,  and, hence,
\vspace{-1.8mm}  $$\textstyle S_n = \bigcup \limits_{i=1}^{k} g_{n,i}\{r_{n,i}\}
= \bigcup \limits_{i=1}^{k} g_{n,i}\{r_i\}.\vspace{-1.8mm}$$
 We have \eqref{eq:k}, and $k \in \mathbf{K}$.

For any $R$,  $S \in \mathbb{S} {-}\{\emptyset\}$, we let  $\mathbf{d}(R,S)$ denote the
length of the minimum\d1length $X$-paths that cross  both $R$ and $S$.
Set \mbox{$d := \mathbf{d}(R,S) \in [1{\uparrow}\infty[\,$}.
The  $X$-distance, in the usual sense,  from $R$ to $S$ equals   \mbox{$\max\{d{-}2,0\}$}.
 It may be seen that
 \mbox{$\P_{d}(R) \cap \P_{d}(S)$} is nonempty and
consists of the minimum\d1length  $X$-paths
with the properties that exactly one edge (the first or last) lies  in~$R $ and exactly one edge
(the last or first) lies in~$S$.
For each \mbox{$\ell \in [0{\uparrow}d[\,$},
$$\P_\ell(R \cup S) = \P_\ell(R) \vee P_\ell(S) \text{ and }
 \abs{\P_\ell(R \cup S)} = \abs{\P_\ell(R)} + \abs{\P_\ell(S)},$$
while $$\P_d(R \cup S) \subseteq  \P_d(R) \cup \P_d(S)\text{ and }
 \abs{\P_d(R \cup S)} < \abs{\P_d(R)} + \abs{\P_d(S)}.$$
(If $d = 1$, then  \mbox{$R \cap S \ne \emptyset$} and  $\P_d(R \cup S) = \P_d(R) \cup \P_d(S)$,
while if \mbox{$d \ge 2$},  then \mbox{$R \cap S = \emptyset$} and
$\P_d(R \cup S) \subset \P_d(R) \cup \P_d(S)$.)

Now  suppose that we have some  $k \in \mathbf{K}$ with~$k\ge 2$; we shall show that \mbox{$k{-}1 \in \mathbf{K}$}.
Here, we have~\eqref{eq:k}.  Consider any  \mbox{$n \in \mathbf{N}$}.  We set
$$d_{n} \coloneq \min \{\mathbf{d}(g_{n,i}R_i,g_{n,j} R_{j})
 \mid \text{$i,j \in [1{\uparrow}k]$ with $i < j$} \} \in [1{\uparrow}\infty[\,.\vspace{-.5mm}$$

We first prove that  \mbox{$\{d_N \mid N \in \mathbf{N}\}$} is finite.
For each $\ell \in [0{\uparrow}\infty[\,$,  set\vspace{-2mm}  $$c_\ell  \coloneq
\textstyle\sum\limits_{i=1}^{k}
   \vert\P_\ell(R_i) \vert =  \textstyle\sum\limits_{i=1}^{k}
   \vert\P_\ell( g_{n,i}R_i) \vert.\vspace{-3mm}$$ Then\vspace{-1mm}
$$    \vert\P_{d_n}(S_n)\vert =
  \vert\P_{d_n}(\textstyle\bigcup\limits_{i=1}^{k}  g_{n,i}R_i)\vert
   \le  \vert  \textstyle\bigcup\limits_{i=1}^{k} \P_{d_n}( g_{n,i} R_i) \vert
  <   \textstyle\sum\limits_{i=1}^{k}
   \vert\P_{d_n}( g_{n,i}R_i) \vert
   =  c_{d_n}.$$
For each $\ell \in [0{\uparrow} d_{n}[\,$,
$$
 \vert\P_{\ell}(S_n)\vert  = \vert\P_\ell(\textstyle\bigcup\limits_{i=1}^k g_{n,i}R_i)\vert
 = \vert \hskip-2pt\textstyle\bigvee\limits_{i=1}^{k}  \P_\ell(g_{n,i}R_i) \vert
 =  \textstyle\sum\limits_{i=1}^{k}
   \vert\P_\ell( g_{n,i}R_i) \vert
   = c_\ell.
$$
Hence,\vspace{-4mm}  $$ \textstyle \sum\limits_{\ell =0}^{d_n-1}  \hskip-2pt c_\ell  \ttt^\ell
 \sqsubseteq\, \SP(S_n) \,\sqsubset\, \sum\limits_{\ell =0}^{d_n}
  c_\ell  \ttt^\ell .$$
Now consider any $m \in \mathbf{N}$ such that  $m \ge n$.  Then
\mbox{$\SP(S_m) \,\sqsubseteq\,  \SP(S_n)$}, since $\SP(S_{-})$ is decreasing.
Hence, \vspace{-1mm}
$$ \textstyle \sum\limits_{\ell =0}^{d_m-1}  \hskip-2pt c_\ell  \ttt^\ell
\,\sqsubseteq\,\SP(S_m)\,\sqsubseteq\, \SP(S_n) \,\sqsubset\, \sum\limits_{\ell =0}^{d_n}
  c_\ell  \ttt^\ell .$$
Thus, $d_m {-}1 < d_n$.  Hence, $d_m \le d_n$.  It follows that \mbox{$\{d_N \mid N \in \mathbf{N}\}$} is finite,
and we may assume it has exactly one element, $d_\ast$,
by replacing $\mathbf{N}$ with a suitable infinite subset.

Fix
 \mbox{$i_n,j_n \in [1{\uparrow}k]$} such that $i_n < j_n$
and $$\mathbf{d}(g_{n,i_n} R_{i_n},g_{n,j_n} R_{j_n}) = d_n = d_\ast.$$
Now \mbox{$\bigl\{(i_N,j_N) : N \in \mathbf{N} \bigr\}$} is finite, and we may
assume it has exactly one element, $(i_\ast, j_\ast)$, by replacing $\mathbf{N}$ with a suitable infinite subset.
By renumbering the $R_i$, we may assume that $(i_\ast, j_\ast) = (1,k)$.  Now \vspace{-1mm}
$$d_\ast = \mathbf{d}(g_{n,1} R_{1},g_{n,k} R_{k}) = \mathbf{d}(R_{1},g_{n,1}^{-1}g_{n,k} R_{k}).$$

 Since $X$ is locally finite and $R_{1}$ and $R_{k}$ are finite sets of edges,
there exist only finitely many elements in
the $G$-orbit of $R_{k}$ whose $X$-distance  from~$R_{1}$ equals
 \mbox{$\max\{d_\ast{-}2,0\}$}.
Thus,
\mbox{$\{g_{N,1}^{-1}g_{N,k} R_{k} \mid N \in \mathbf{N}\}$}
is finite, and we may assume that it has exactly one element, $R_\ast$,
by replacing $\mathbf{N}$ with a suitable infinite subset.
Now
 \mbox{$ g_{n,1}^{-1}g_{n,k}R_{k} =  R_\ast$},  and then
\mbox{$g_{n,1}  R_{1} \cup g_{n,k} R_{k}  = g_{n,1} (R_{1} \cup R_\ast)$.}
Here, we may replace  $R_{1}$ with $R_{1} \cup R_\ast$
and $k$ with $k{-}1$,  and we see that $k{-}1 \in \mathbf{K}$.

This completes the proof of Theorem~\ref{th:berg3}.
\end{proof}

The  following is the locally finite case of a result of Bowditch and Dunwoody~\cite[$\S8$]{BHB}, which
was based on work of Bergman  \mbox{\cite[Lemma~1]{GMB1}} and Dunwoody and Swenson  \mbox{\cite[Lem\-ma~3.3]{DS}.}

\begin{theorem}\label{thm:nest} Let $X$ be any connected, locally finite, $G$-finite $G$-graph, and
  $\AA$~be any Boolean $G$-subalgebra of $\BB(X)$.
Then
$\irr(\AA)$ is a  $\comp$-stable,  nested  $G$-subset of~$\AA$
such that \mbox{$\emptyset \not\in \irr(\AA)$} and $ \gen{\irr(\AA)}_{{\BB}} =\AA$.
\end{theorem}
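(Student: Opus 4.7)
The plan is to dispose of the easy claims quickly and then tackle nestedness by a case analysis based on two clean identities. First, $\comp$-stability and $G$-stability of $\irr(\AA)$ follow from the symmetries $\SP(A^\comp)=\SP(A)$ and $\SP(gA)=\SP(A)$ (the latter because $G$ acts on $X$ by graph automorphisms): if $C$ is a Boolean combination of elements of strictly smaller $\SP$, then so are $C^\comp$ and $gC$. That $\emptyset \notin \irr(\AA)$ is noted in the paper. For $\gen{\irr(\AA)}_{{\BB}} = \AA$, I would induct on the well\d1ordered set $\SP(\BB(X)) \subseteq \integers[[\ttt]]$ provided by Theorem~\ref{th:berg3}: any $\AA$-irreducible $C$ lies in $\irr(\AA)$ directly, while any $\AA$-reducible $C$ is a Boolean combination of elements $D \in \AA$ with $\SP(D) \sqsubset \SP(C)$, each of which lies in $\gen{\irr(\AA)}_{{\BB}}$ by induction.

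For nestedness, fix $e, f \in \irr(\AA)$ and set $C_1 \coloneq e \cap f$, $C_2 \coloneq e \cap f^\comp$, $C_3 \coloneq e^\comp \cap f$, $C_4 \coloneq e^\comp \cap f^\comp$, the goal being to show some $C_i = \emptyset$. For distinct $i,j \in \{1,2,3,4\}$ let $P_{ij}$ denote the series $\sum \ttt^{\length(p)}$ summed over all $X$-paths $p$ starting in $C_i$ and ending in $C_j$; the same local finiteness argument used to define $\SP$ makes $P_{ij}$ a well\d1defined element of $\integers[[\ttt]]$, and $P_{ij} = P_{ji}$ by path inversion. Partitioning each of the path sets $\P(e)$, $\P(f)$, $\P(C_1)$, $\P(C_2)$, $\P(C_3)$, $\P(C_4)$ by the pair of corners containing each path's endpoints yields $\SP(e) = P_{13} + P_{14} + P_{23} + P_{24}$, $\SP(f) = P_{12} + P_{14} + P_{23} + P_{34}$, and $\SP(C_i) = \sum_{j \ne i} P_{ij}$; a brief addition then produces the central identities
\begin{equation*}
\SP(C_1) + \SP(C_4) + 2P_{23} \,=\, \SP(e) + \SP(f) \,=\, \SP(C_2) + \SP(C_3) + 2P_{14}.
\end{equation*}

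Now suppose, for a contradiction, that all four $C_i$ are nonempty; since $X$ is connected, $P_{14}$ and $P_{23}$ are both $\sqsupset 0$. Irreducibility of $e$, applied to the disjoint decompositions $e = C_1 \vee C_2$ and $e^\comp = C_3 \vee C_4$ (using $\SP(e^\comp) = \SP(e)$), yields (I) $\max(\SP(C_1), \SP(C_2)) \sqsupseteq \SP(e)$ and (II) $\max(\SP(C_3), \SP(C_4)) \sqsupseteq \SP(e)$; irreducibility of $f$ similarly gives (III) $\max(\SP(C_1), \SP(C_3)) \sqsupseteq \SP(f)$ and (IV) $\max(\SP(C_2), \SP(C_4)) \sqsupseteq \SP(f)$. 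Using the symmetries $e \leftrightarrow e^\comp$ and $f \leftrightarrow f^\comp$ (which merely relabel the $C_i$ without changing $\SP(e)$ or $\SP(f)$), I may arrange $\SP(e) \sqsubseteq \SP(f)$ and, by~(III), $\SP(C_1) \sqsupseteq \SP(f)$. Then (IV) splits into two subcases. If $\SP(C_2) \sqsupseteq \SP(f)$, the identities give $\SP(C_4) + 2P_{23} \sqsubseteq \SP(e)$ and $\SP(C_3) + 2P_{14} \sqsubseteq \SP(e)$, forcing $\SP(C_3), \SP(C_4) \sqsubset \SP(e)$ and contradicting~(II). If instead $\SP(C_4) \sqsupseteq \SP(f)$, the first identity yields $2\SP(f) + 2P_{23} \sqsubseteq \SP(e) + \SP(f) \sqsubseteq 2\SP(f)$, so $2P_{23} \sqsubseteq 0$, contradicting $P_{23} \sqsupset 0$. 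The main obstacle is purely organizational: matching ``large'' corners to the right constraint via the available symmetries; the remaining ingredients (the well\d1ordered induction, the symmetries of $\SP$, and the path\d1bookkeeping behind the identities) are all routine.
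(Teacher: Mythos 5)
Your proof is correct and is essentially the paper's argument: the well-ordering of $\SP(\BB(X))$ (Theorem~\ref{th:berg3}) drives the generation claim, and nestedness is obtained by decomposing the path series over the four corners and playing the resulting additive identities against the inequalities $\SP(\text{half})\sqsubseteq\SP(\text{corner})$ that irreducibility forces. The only differences are organizational: the paper avoids your two-subcase endgame by first choosing $(A,B)\in\{e,e^\comp\}\times\{f,f^{\,\comp}\}$ so as to minimize $\SP(A\cap B)$, which yields $P_{14}\sqsubseteq P_{12}-P_{13}$ and, on interchanging $A$ and $B$, $P_{14}\sqsubseteq P_{13}-P_{12}$, hence $2P_{14}\sqsubseteq 0$ at once; and note that your normalization $\SP(e)\sqsubseteq\SP(f)$ is achieved by the swap $e\leftrightarrow f$ (a legitimate symmetry of the configuration, since it merely exchanges $C_2$ and $C_3$), not by the complementation symmetries you cite, which preserve both $\SP(e)$ and $\SP(f)$.
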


\begin{proof} It is clear that $\irr(\AA)$ is a $\comp$-stable  $G$-subset  of $\AA$
such that $\emptyset \not\in \irr(\AA)$.
By Theorem~\ref{th:berg3}, $\SP(\AA)$ is well-ordered, and then  a standard argument shows that
$\gen{\irr(\AA)}_{{\BB}} =\AA$.
It remains to show that $\irr(\AA)$ is nested.

Consider any $A'$, $B' \in\irr(\AA)$.
It suffices to show that $A'$ and $B'$ are nested.
Let us choose   $(A,B) \in \{A', A'^\comp\} \times \{B',B'^\comp\}$ to make $\SP(A \cap B)$
as $\sqsubset$-small as possible.  In particular, \mbox{$\SP(A \cap B) \sqsubseteq \SP(A \cap B^\comp)$},
and we see that
  $$A = (A \cap B) \cup (A \cap B^\comp) \in
\gen{ \{C \in \AA \mid \SP(C) \sqsubseteq \SP(A \cap B^\comp)\}}_{{\BB}}.$$
Since $A$  is  $\AA$-irreducible,  it is not the case that
 \mbox{$\SP(A \cap B^\comp) \,\sqsubset\,   \SP(A)$}.  Thus,
\begin{equation}\label{eq:symm2}
 \SP(A) \,\sqsubseteq\,   \SP(A \cap B^\comp).
\end{equation}
For any elements  \mbox{$C$, $D$} of $\BB(X)$, let us define \mbox{$\P(C,D)\coloneq \P(C) \cap \P(D^\comp)$} and
 \mbox{$\SP(C,D) \coloneq \Sigma(\P(C,D)) \in \integers[[\ttt]]$}.
If \mbox{$C \cap D = \emptyset$}, then \mbox{$\P(C,D)$} is  the set of all $X$-paths which begin in $C$ and end in $D$,
and, here, \mbox{$\SP(C,D) = \SP(D,C)$}.
Set
 \begin{align*}
&a\coloneq \SP(A\cap B, A^\comp\cap B), &&b\coloneq\SP(A\cap B, A^\comp\cap B^\comp),
&&c\coloneq \SP(A\cap B^\comp, A^\comp),
  \\& && &&d\coloneq  \SP(A\cap B^\comp, A \cap B).\\[-8mm]
\end{align*}
It is not difficult to see that\vspace{-2mm}
$$
 (a\,{+}\,b)\,{+}\,c   =   \SP(A, A^\comp)   =   \SP(A)
 \overset{\eqref{eq:symm2}}{\sqsubseteq}  \SP(A \cap B^\comp)
 = \SP(A \cap B^\comp, A^\comp \cup B)   = c\,{+}\,d.$$
 Hence, $b\sqsubseteq  d{-}a$.  By interchanging $A$ and $B$, we see that $b  \sqsubseteq  a{-}d$, also.
Thus,    $b  \sqsubseteq  0$.
Hence, there exists no $X$-path  which begins in $A \cap B$ and ends in $A^\comp  \cap B^\comp$;
 hence, $A \cap B$ or $A^\comp  \cap B^\comp$ is empty;
and, hence,  $A'$~and $B'$ are nested, as desired.
\end{proof}


\begin{remarks}  Throughout this section, we have considered
 connected, locally finite, $G$-finite $G$-graphs;
these include the Cayley graphs of finitely generated groups,
which are the graphs we shall be using.
 Both Bergman~\cite{GMB1} and Bowditch~\cite{BHB} consider
more general situations.
Bergman obtains similar results about connected, locally finite $G$-graphs.
 Bowditch obtains  results about countable groups.
We have not seen any way to use  these  generalizations for our narrow objective of
improving the proof of Theorem~\ref{thm:ast}.  In~\cite[II.2.20]{DD},
nested generating sets were constructed for Boolean algebras of arbitrary
connected graphs.  \hfill\qed
\end{remarks}

\section{Building trees from the Boolean algebra of a group}\label{sec:BB}

In this section we shall prove a substantial part of the finitely generable case of Theorem~\ref{thm:ast}.

Recall Definitions~\ref{defs3.4} and~\ref{defs3.5}.
 The following result  is implicit in the finitely generable case of the Almost Stability Theorem.
Dunwoody~\cite[4.7]{MJD} showed that
 \mbox{$G\substabs(\V(T))  \subseteq \Almosts(\FF)$}.

\begin{theorem} \label{thm:nested}  Suppose that  \mbox{$\rank(G) < \omega_{0}$}.
  For each $G$-finite $G$-subset $\FF$ of~$\BB(G)$,   there exists some   $G$-finite $G$-tree $T$
such that   \mbox{$G\substabs(\V(T)) = \Almosts(\FF)$} and  $\E(T)$ is $G$-quasifree.
\end{theorem}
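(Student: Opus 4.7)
The plan is to realize the desired $G$-tree as Dunwoody's tree \mbox{$\T(\irr(\AA))$} built from the irreducibles of the Boolean $G$-subalgebra \mbox{$\AA \subseteq \BB(G)$} generated by~$\FF$.  Since \mbox{$\rank(G) < \omega_{0}$} I would choose a finite generating set~$S$ of~$G$ and form the Cayley graph \mbox{$X \coloneq \X(G,S)$}; by Lemma~\ref{lem:equalB}, $X$ is a connected, locally finite, $G$-finite, $G$-free $G$-graph with \mbox{$\V(X) = G$} and \mbox{$\BB(X) = \BB(G)$}.  The $G$-stability of $\FF$ makes \mbox{$\AA \coloneq \gen{\FF}_{\BB}$} a Boolean $G$-subalgebra of $\BB(X)$, and Theorem~\ref{thm:nest} produces \mbox{$\EE \coloneq \irr(\AA)$}, a $\comp$-stable nested $G$-subset of $\AA$ with \mbox{$\emptyset \not\in \EE$} and \mbox{$\gen{\EE}_{\BB} = \AA$}.

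Next I would verify that $\EE$ is finitely separating and $G$-finite, so that Theorem~\ref{thm:Dunwoody} yields a $G$-finite $G$-tree \mbox{$T \coloneq \T(\EE)$} with \mbox{$\E(T) = \EE$}.  Finitely separating means that for each pair \mbox{$v, w \in G$} only finitely many \mbox{$A \in \EE$} have exactly one of $v, w$ in~$A$; fixing an $X$-path from $v$ to~$w$ shows that each such $A$ must have \mbox{$\delta A$} meeting the finite edge set of the path, so it suffices to prove that for every finite \mbox{$S' \subseteq \E(X)$} only finitely many \mbox{$A \in \EE$} satisfy \mbox{$\delta A \cap S' \ne \emptyset$}.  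I would extract this from Bergman's well\d1ordering of $\SP(\AA)$ (Theorem~\ref{th:berg3}) using that the leading term of $\SP(A)$ is \mbox{$2 \vert \delta A \vert \, \ttt$}, which forces coboundaries to grow along any infinite family of irreducibles; the $G$-finiteness of $\EE$ is a parallel counting argument applied to a finite $G$-transversal of the $G$-finite set~$\FF$.   The edge $G$-set \mbox{$\E(T) = \EE$} is then $G$-quasifree, because each \mbox{$e \in \EE$} has finite, nonempty coboundary $\delta e \subseteq \E(X)$ (connectedness of $X$ together with \mbox{$\emptyset \ne e \ne \V(X)$}), the $G$-stabilizer of $e$ permutes the finite set $\delta e$, and the Cayley $G$-action on $\E(X)$ is free, so the stabilizer injects into the finite symmetric group of $\delta e$.

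It remains to establish \mbox{$G\substabs(\V(T)) = \Almosts(\FF)$}.  By Theorem~\ref{thm:Dunwoody}(ii) each vertex of~$T$ is either an auxiliary vertex \mbox{$\tau e = \{e,e^\comp\}$} for some \mbox{$e \in \EE$} or has the form \mbox{$v^{\ast\ast} \cap \EE$} for some \mbox{$v \in G$}.  For the inclusion~$\subseteq$, $H$-stability of such a vertex propagates through \mbox{$\gen{\EE}_{\BB} = \AA$} to give that every element of \mbox{$\FF \subseteq \AA$} is almost a right $H$-set, that is, \mbox{$H \in \Almosts(\FF)$}; for the inclusion~$\supseteq$, \mbox{$H \in \Almosts(\FF)$} forces $H$ to act almost-trivially on every element of~$\EE$, and I would then locate an $H$-stable vertex via the natural map \mbox{$v \mapsto v^{\ast\ast} \cap \EE$} of Theorem~\ref{thm:Dunwoody}(ii), using an averaging-style argument to pass from almost-invariance of the generating set to exact invariance of a vertex.

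The main obstacle will be the finitely separating and $G$-finiteness step for $\irr(\AA)$: this is where the full strength of the Bergman--Bowditch--Dunwoody machinery of Section~\ref{sec:5} is needed, and it requires careful bookkeeping of irreducibles against both their $\SP$-values and their coboundary structures in the Cayley graph.  Once this is secured, the edge-stabilizer claim and the identification \mbox{$G\substabs(\V(T)) = \Almosts(\FF)$} are essentially bookkeeping through the tree construction of Section~\ref{sec:4}.
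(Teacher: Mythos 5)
Your overall strategy---Cayley graph, $\AA \coloneq \gen{\FF}_{{\BB}}$, Theorem~\ref{thm:nest}, then Dunwoody's construction and the identification of substabilizers---is the paper's, but there is a genuine gap at exactly the step you flag as the main obstacle. You work with the full set $\irr(\AA)$ and propose to deduce both its $G$-finiteness and the finitely separating property from Bergman's well-ordering of $\SP(\AA)$. Well-orderedness only forbids infinite strictly $\sqsubset$-decreasing sequences; it does not bound, for a fixed edge $e$ of $X$, the number of elements $A$ with $e \in \delta A$. Note that all $G$-translates of an irreducible share the same $\SP$-value, and an infinite nondecreasing family of $\SP$-values is perfectly compatible with every member's coboundary containing $e$; so the claim that the leading coefficient $2\vert\delta A\vert$ of $\SP(A)$ ``forces coboundaries to grow'' does not deliver the finiteness you need. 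It is also not established (and not needed) that $\irr(\AA)$ itself is $G$-finite.

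The paper sidesteps both issues. Since $\FF$ is $G$-finite and each element of a finite $G$-transversal of $\FF$ lies in the Boolean subalgebra generated by finitely many irreducibles, one chooses a $\comp$-stable, $G$-finite $G$-subset $\EE$ of $\irr(\AA)$ with $\FF \subseteq \gen{\EE}_{{\BB}}$, hence $\gen{\EE}_{{\BB}} = \AA$; nestedness and $\emptyset \not\in \EE$ are inherited from $\irr(\AA)$. Finite separation of this $\EE$ then follows from a purely combinatorial count that makes no further use of the well-ordering: for $A \in \EE$ and an edge $e$ of $X$, the set $\{g \in G \mid e \in \delta(g^{-1}A)\} = \{g \in G \mid ge \in \delta A\}$ is finite because $\delta A$ is finite and $G$ acts freely on $\E(X)$; the $G$-finiteness of $\EE$ then gives that only finitely many $B \in \EE$ satisfy $e \in \delta B$, and connectivity of $X$ finishes. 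Your quasifreeness argument and your outline of $G\substabs(\V(T)) = \Almosts(\FF)$ (almost-invariance of the edges in one direction, a bounded $H$-orbit in the tree and hence a fixed vertex in the other) match the paper once $\EE$ is replaced by this $G$-finite subset, but as written the two central finiteness claims are unsupported.
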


\begin{proof}    Let $S$ be any finite generating set of $G$, and set $X \coloneq \X(G,S)$.
By Lemma~\ref{lem:equalB}, $X$ is a connected, locally finite, $G$-finite, $G$-free $G$-graph,
  \mbox{$\V(X)= G$}, and $\BB(X) = \BB(G)$.

Set $\AA \coloneq \gen{\FF}_{{\BB}}$ in $\BB(G) = \BB(X)$.

By Theorem~\ref{thm:nest},
$\gen{\irr(\AA)}_{{\BB}} = \AA$, and
 $\irr(\AA)$ is a $\comp$-stable, nested $G$-subset of $\AA$ such that
$\emptyset \not\in \irr(\AA) $.

Since $\FF$ is $G$-finite, there exists some $\comp$-stable,  $G$-finite  $G$-subset $\EE$ of
 $\irr(\AA)$ such that  $\FF \subseteq \gen{\EE}_{{\BB}}$,
and then   $\EE$  is nested, \mbox{$\emptyset \not\in \EE $},   and  $\gen{\EE}_{{\BB}} =\AA$.

We shall now see that $\EE$ is finitely separating.
Consider any edge $e$ of $X$.
For each $A \in \EE$, since $ G_e = \{1\}$,
there exist only finitely many $g \in G$ such that $ge \in \delta(A)$, or, equivalently, $e \in \delta(g^{-1}A)$.
Since $\EE $ is $G$-finite, we then see that there  exist only finitely many
$B \in \EE $ such that  $e\in \delta B$, that is,
 $(\iota e)^{\ast\ast} \cap \EE =_{\text a} (\tau e)^{\ast\ast} \cap \EE$.
Since $X$ is connected, it follows that $\EE$ is finitely separating.

Now $\EE$ is a  $\comp$-stable,  finitely separating, nested $G$-subset of~$ \PP(G) $
and \mbox{$\emptyset \not\in\EE$.}
By Theorem~\ref{thm:Dunwoody}\eqref{it:2}, $\T(\EE)$ is a
$G$-tree with edge $G$-set $\EE$.  The edge $G$-set of $X$ is $G$-free,
and, hence,   $\BB(X) - \{\emptyset,\V(X)\}$ is $G$-quasifree.
Hence,   $\EE$ is $G$-quasifree, that is,  $\E(\T(\EE))$  is $G$-quasifree.
Since $\EE$ is $G$-finite, we see that $\T(\EE)$ is $G$-finite.

It remains to show that   $G\substabs(\V(\T(\EE))) = \Almosts(\FF)$.
Since $$\gen{\EE}_{{\BB}} = \AA = \gen{\FF}_{{\BB}},$$
it is not difficult to show that  $\Almosts(\EE)=\Almosts(\AA) = \Almosts(\FF),$
and it suffices to show that $G\substabs(\V(\T(\EE))) = \Almosts(\EE)$.
Notice that if $\EE = \emptyset$, then $\T(\EE)$
is a single vertex stabilized by all subgroups of $G$,
in which case it is clear that \mbox{$ G\substabs(\V(\T(\EE))) = \Almosts(\EE)$}.
Thus, we may assume that $\EE \ne \emptyset$.

We shall use the following observations.
Consider any $e$, $f \in \EE$.
Notice that, for each \mbox{$g \in G$},  $$(gf \in 1^{\ast\ast}) \Leftrightarrow
 (1 \in gf)   \Leftrightarrow (g^{-1} \in f)
\Leftrightarrow \bigl(g \in f^{-1}\coloneq \{g^{-1} \mid g \in f\} \in \PP(G)\bigr).$$
With careful interpretation, we may write \mbox{$G\{f\} \cap 1^{\ast\ast} = f^{-1} \{f\}$}
in $\PP(\PP(G))$. By Theorem~\ref{thm:Dunwoody}\eqref{it:2},\eqref{it:3},
 $    \iota e   =_{\text a} 1^{\ast\ast} \cap\, \EE $
in $\PP(\PP(G))$, and, hence,
\mbox{$  G\{f\} \cap \iota e  =_{\text a} G\{f\} \cap 1^{\ast\ast}.$}
  We record
\begin{equation}\label{eq:eq2}
 f^{-1} \{f\}    =_{\text a} G\{f\} \cap \iota e   \text{ in } \PP(\PP(G)).
\end{equation}

Now consider any $H \in  G\substabs(\V(\T(\EE)))$.
Then there exists some $e \in \EE$ such that
  $H$ stabilizes $\iota e$ or $\tau e$.  Consider any $f \in \EE$. Notice that
  \eqref{eq:eq2} implies that
$$  f^{-1}   =_{\text a} \{g  \in G  \mid gf \in \iota e\}   \text{ in } \PP(G),$$ since these are
right $G_f$-sets and $ G_f$ is finite.  Hence,
\mbox{$f    =_{\text a}  \{g \in G \mid f \in g\iota e\}$,}
which is a right $G_{\iota e}$-set. Similarly,
\mbox{$f    =_{\text a}  \{g \in G \mid f \in g\iota (e^\comp)\}$} and, hence,
$$f  =_{\text a} \{g \in G \mid f \in \iota ge \text{ and } f \in \iota (ge^\comp)\},$$
which is a right $G_{\tau e}$-set.  Thus, $f$ is almost  a right $H$-set.
Thus, $H \in \Almosts(\EE)$.

For the converse, we now consider any $H \in \Almosts(\EE)$.
Consider any $e \in \EE$ and any   finite $G$-transversal $F$
in the $G$-finite $G$-set $\EE$.
For each $f \in F$,  we have \mbox{$f  \in  \EE$,} and, hence,
there exists some right $H$-subset  $A_f$ of~$G$ such that
\mbox{$ f  =_{\text a} A_f$} in $\PP(G)$.
We may then form the  $H$-set
$$w \coloneq \textstyle \bigcup\limits_{f \in F}  (A_f^{-1} \{f\})
=_{\text a} \bigcup\limits_{f \in F}  (f^{-1}\{f\})
 \overset{\eqref{eq:eq2}}{=_{\text a}}
 \bigcup\limits_{f \in F} (G\{f\}  \cap \iota e) = \iota e\text{ in } \PP(\PP(G)).$$
Set $d \coloneq \abs{(\iota e)\triangledown (w)} \in [0{\uparrow}\infty[\,$.
For each $h \in H$,   $$d = \abs{(h\iota e)\triangledown (hw)}
= \abs{(h\iota e)\triangledown (w)} \text{ and }
 \abs{(h\iota e)\triangledown (\iota e)} \le \abs{(h\iota e)\triangledown ( w)} +
 \abs{ (w)\triangledown(\iota e)} = 2d.$$
By Theorem~\ref{thm:Dunwoody}\eqref{it:2},
 the $\T(\EE)$-distance between   $ \iota e$ and $h\iota e$    is at most $2d$.
Hence, the  subtree of $\T(\EE)$ spanned by $H\iota e$ has finite diameter.
Consider any $H$-subtree $T$ of $\T(\EE)$ of minimum possible diameter.
Then  $T$ has at most one edge,
for, otherwise,  deleting from $T$ all vertices of valence one and the edges incident thereto
leaves an $H$-subtree of smaller diameter.  It follows  that $H$ stabilizes some
vertex  of     $\T(\EE)$.   Thus, $H \in  G\substabs(\V(\T(\EE)))$.
\end{proof}

\section{Digression 2:  Stallings' ends theorem}\label{sec:ends}

In this section, we shall deduce a form of Stallings' celebrated Ends Theorem~\cite[4.1]{Stallings2},
a result which inspired much subsequent work in combinatorial group theory,
including all the theory discussed in this article.

In the case where $G$ is finitely generable and $S$ is any finite generating set of~$G$,
let $\mathbb{S}$ denote the set of finite subsets of   $\E(\X(G,S))$,
and, for each  $E \in \mathbb{S}$,
let $\phi(E)$ denote the set of infinite components of  $\X(G,S){-}E$.
Then \mbox{$(\phi(E) \mid E \in \mathbb{S})$} forms an inverse directed system, and,
by a 1945 argument of Freudenthal~\cite[6.16.1]{Freudenthal},  the resulting inverse limit
is independent of the choice of finite generating set $S$. The elements of this inverse limit
are called the \textit{ends} of the group~$G$.

We wish to consider the graph-theoretical conditions
\begin{enumerate}[\hskip.9cm(a)]
\item  $G$ is finitely generable and has more than one end.
\item  There exists some $G$-tree such that the edge $G$-set is $G$-quasifree and
no vertex is $G$-stable.
\end{enumerate}
and the   group-theoretical conditions
\begin{enumerate}[\hskip .9cm(a$'$)]
\item  $\BB(G)$ has some element $A$ such that both $A$ and $G{-}A$ are infinite.
\item   Either $G$ is countably infinite and locally finite, or there exists some finite subgroup $B$ of $G$ such that $G$
 is  a free product with amalgamation $C \ast_BD$ where $B < C$ and $B < D$
or $G$ is an HNN extension $C \ast_B \phi$ where $B \le C$ and $\phi:B \to C$ is a monomorphism.
\end{enumerate}

In 1949, Specker~\cite{Specker} showed that if $G$ is finitely generable, then (a)$\Leftrightarrow$(a$'$).
 Subsequently, it became a common practice to use some cohomological form of (a$'$) as a definition for
 `$G$ has more than one end'  even if $G$ is not finitely generable and, hence,
  ends of $G$ are not defined.

By Bass-Serre theory, (b)$\Leftrightarrow$(b$'$); see~\cite{Serre},~\cite[I.4.12]{DD}.

It is not difficult to show that (b)+(b$'$)$\Rightarrow$(a$'$); see~\cite[IV.6.10]{DD}.

In 1970, Stallings~\cite[4.1]{Stallings2}
proved  that  (a)$\Rightarrow$(b$'$); notice that
no finitely generable group is both infinite and locally finite.
He remarked that a communication from Dunwoody inspired his short proof of his key lemma~\cite[1.5]{Stallings2}
(which is actually the Cayley-graph case of a result of Bergman~\cite[Theorem~1]{GMB1}).
In 1979, Dunwoody~\cite[4.4]{MJD} proved directly that if $G$ is  finitely generable, then (a$'$)$\Rightarrow$(b);
this is the restatement of Stallings' result in which the graph-theoretic hypothesis (a)
is  replaced with the group-theoretic condition~(a$'$) and
 the group\d1theoretic conclusion (b$'$) is  replaced with the  graph-theoretic condition~(b).

\begin{stallings}\label{th:ends} If $G$ is finitely generable and there exists some element~$A$ in \mbox{$\BB(G)$}
such that $A$ and $G{-}A$ are infinite, then
  there exists some $G$-tree  such that no vertex is $G$-stable,
the edge $G$-set is $G$-quasifree, and the number of $G$-orbits  of edges equals $1$.
\end{stallings}

\begin{proof}
Set \mbox{$\FF \coloneq \{gA \mid g \in G\} \subseteq \BB(G)$.}
By Theorem~\ref{thm:nested},  there exists
some $G$-finite $G$-tree $T$ such that
  \mbox{$G\substabs(\V(T)) = \Almosts(\FF)$} and $\E(T)$ is $G$-quasifree.   Notice that
$G \not\in \Almosts(\{A\}) = \Almosts(\FF) = G\substabs(\V(T))$.
Now we collapse $G$-orbits of edges of $T$, one $G$-orbit at a time.  At some first stage,
a   $G$-stable vertex appears, and then the $G$-orbit of edges that has just been collapsed is the edge
$G$-set of
a $G$-tree which has the desired properties.
\end{proof}

\begin{remarks}
In 1968,  Stallings~\cite{Stallings1}
had proved a special case of (a)$\Rightarrow$(b$'$), and had written the following:
 ``Since ``ends" are, after all, a topological
kind of thing, there is no need to make a profuse apology for a topological kind of proof.
However, maybe there is some algebraic translation of this
which will go over to infinitely generated groups."
An algebraic translation which went over to all groups
was given in 1989  when Dicks and Dunwoody proved  in~\cite[IV.6.10]{DD} that (a$'$)$\Rightarrow$(b$'$). An
important advance in the theory had been made by Holt in 1981, who showed  in~\cite{Holt}
 that if  $G$ is locally finite, then (a$'$)$\Rightarrow$(b$'$); notice  that no locally
finite group is  an HNN extension or a proper free product with amalgamation.\qed
\end{remarks}

\section{The finitely generable case of the  Almost Stability Theorem}\label{sec:6}

Recall Definitions~\ref{defs:6.1}.
We may now prove the case of the Almost Stability Theorem~\ref{thm:ast} where $G$ is finitely generable.

\begin{theorem}\label{thm:astfg} Suppose that $\rank(G) < \omega_{0}$.
If  $E$ and $Z$ are any $G$-sets such that
$E$ is $G$-quasifree and  each element's $G$-sta\-bi\-li\-zer  stabilizes  some element of~$Z$,
then each $G$-sta\-ble almost equality class in the $G$-set  $\Map(E,Z)$
is the vertex $G$-set of some $G$-tree.
\end{theorem}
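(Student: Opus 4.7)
The plan is to apply Theorem~\ref{thm:nested} to a $G$-subset $\FF \subseteq \BB(G)$ encoding the pointwise evaluation data of $V$, and then identify the vertex $G$-set of the resulting tree with $V$. Fix $v_0 \in V$, and for each pair $(e, z) \in E \times Z$ set
$$C_{e, z} \coloneq \{g \in G : \langle g v_0, e\rangle = z\} \subseteq G.$$
For any $h \in G$, the symmetric difference $C_{e, z} \triangledown C_{e, z} h$ is contained in $\{g \in G : g^{-1} e \in v_0 \triangledown h^{-1} v_0\}$, which is finite since $v_0 \triangledown h^{-1} v_0 \subseteq E$ is finite ($h v_0 \in V$ because $V$ is $G$-stable) and the preimage under $g \mapsto g^{-1} e$ of a finite set is finite by the $G$-quasifreeness of $E$ (each $G_e$ being finite). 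Hence $C_{e, z} \in \BB(G)$, and I take $\FF$ to be the $G$-subset of $\BB(G)$ generated by the nonempty proper $C_{e, z}$.

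Next I would verify the substabilizer equality $\Almosts(\FF) = \{H \le G : H \text{ stabilizes some element of } V\}$: a subgroup $H$ stabilizes some $v \in V$ precisely when each $C_{e, z}$ is almost a right $H$-set, essentially because $v$ differs from $v_0$ on a finite set of coordinates in $E$, so the $z$-level sets transform under $H$-translation only up to a finite error. When $\FF$ happens to be $G$-finite (for instance when $V/G$ and the relevant orbits on $E \times Z$ are finite), Theorem~\ref{thm:nested} delivers a $G$-finite $G$-tree $T$ with $G\substabs(\V(T)) = \Almosts(\FF)$ and $G$-quasifree edge $G$-set. In general $\FF$ need not be $G$-finite, and one has to filter $\FF$ by an increasing sequence of $G$-finite subfamilies $\FF_1 \subseteq \FF_2 \subseteq \cdots$, apply Theorem~\ref{thm:nested} to each $\FF_n$, and assemble the candidate tree as a colimit of the resulting $G$-finite trees via the natural edge-orbit-collapsing tree morphisms.

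The concluding and most delicate step is to produce a $G$-equivariant bijection $V \leftrightarrow \V(T)$. By Theorem~\ref{thm:Dunwoody}(iii), each element of $V$ determines a vertex $v^{\ast\ast} \cap \EE$ of $T$, where $\EE \subseteq \FF$ is the $\comp$-stable nested generating subset furnished by the internal workings of Theorem~\ref{thm:nested}, and this gives a $G$-equivariant map $V \to \V(T)$; its injectivity is straightforward because $\FF$ separates distinct elements of~$V$. I expect the main obstacle to be \emph{surjectivity}, which is precisely the point at which the argument must strengthen the Bowditch-Dunwoody embedding result cited in the introduction. Given a vertex of $T$ encoded as a filter on $\EE$, one must reconstruct an actual element $v \in V$ realising it, i.e., assign a value $\langle v, e\rangle \in Z$ at each $e \in E$ consistently with the filter data, and verify that the resulting $v\colon E \to Z$ differs from $v_0$ on only finitely many coordinates. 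This reconstruction exploits condition~(b) of Remarks~\ref{rems:1}(ii) to choose the value at each $e$ (using that $G_e$ stabilizes some element of $Z$) and uses the finite coboundary of the filter in $T$ to control the discrepancy with $v_0$, thereby placing $v$ in the almost equality class $V$.
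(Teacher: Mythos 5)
Your setup is the right one: the sets $C_{e,z}$ are exactly the fibres $\langle{}_{-}v_0,e\rangle^{-1}(\{z\})$ that the paper uses, and the equality $G\substabs(V)=\Almosts(\FF)$ is indeed the pivot of the argument. But there are two genuine gaps. First, you treat the possible failure of $G$-finiteness of $\FF$ as a real case and propose a colimit of trees over a filtration by $G$-finite subfamilies; in fact $\FF$ \emph{is} $G$-finite, and proving this is precisely where the hypothesis $\rank(G)<\omega_0$ does its work. With $S$ a finite generating set, the set $E_G$ of $e\in E$ on which $\langle{}_{-}v_0,e\rangle$ is non-constant equals $\bigcup_{g\in G}\bigcup_{s\in S}g\bigl(v_0\triangledown(sv_0)\bigr)$, hence is $G$-finite, and connectedness of the Cayley graph together with finiteness of the broken-edge set shows each $\langle{}_{-}v_0,e\rangle$ has only finitely many fibres; so $\FF$ is a $G$-finite $G$-subset of $\BB(G)$ and Theorem~\ref{thm:nested} applies directly. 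The colimit you sketch is not a viable substitute: a direct limit of trees along edge-collapsing morphisms need not control $G\substabs$ of the limit, and nothing in the paper supports such an assembly. Relatedly, your verification of $\Almosts(\FF)\subseteq G\substabs(V)$ is only gestured at; this is the hard inclusion, it requires actually constructing a fixed point $w\in V$ for a given $H\in\Almosts(\FF)$, and it splits into the cases $H$ infinite and $H$ finite --- the latter being the only place where the hypothesis that each $G_e$ stabilizes some element of $Z$ is used.

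Second, the concluding step should not aim at a $G$-equivariant \emph{bijection} $V\leftrightarrow\V(T)$; no such bijection exists in general, since $T$ is $G$-finite while $V$ typically is not (and for the same reason the map $v\mapsto v^{\ast\ast}\cap\EE$ is not injective, contrary to your claim that $\FF$ separates points of $V$). The paper's route is different and avoids any reconstruction of elements of $V$ from filters: collapse $G$-compressible edge orbits of $T_1$ to obtain a $G$-incompressible tree $T_2$ with the same substabilizer set; from $G\substabs(V)=G\substabs(\V(T_2))$ obtain $G$-maps $\phi\colon V\to\V(T_2)$ and $\psi\colon\V(T_2)\to V$; incompressibility forces $\phi\circ\psi$ to be bijective, so $\psi$ embeds $\V(T_2)$ into $V$ and $T_2$ becomes a $G$-subtree of $\complete(V)$; finally adjoin the edges $(v,\phi(v))$ for $v\in V-\V(T_2)$ to hang every remaining element of $V$ as a pendant vertex. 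This retraction device is what upgrades the Bowditch--Dunwoody embedding statement to the full conclusion, and without it (or something equivalent) your proof does not close.
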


\begin{proof}  Let $V$ be any $G$-sta\-ble almost equality class in   $\Map(E,Z)$.
We shall prove a sequence of three equalities which will relate $V$ to a $G$-incompressible $G$-tree.

Let $S$ be any finite generating set of $G$, and set \mbox{$X \coloneq \X(G,S)$}.
Then $X$ is a connected, locally finite, $G$-finite, $G$-free $G$-graph,
\mbox{$\V(X)= G$}, and $\BB(X) = \BB(G)$.

As $V$ is nonempty, we may choose an element $v$ of $V$.

Consider any \mbox{$e \in E$}.  We then have a map \mbox{$\langle _{-}v,e\rangle \colon G \to Z$},
\mbox{$g \mapsto \langle gv,e\rangle$}, and we shall be interested in the set of
fibres thereof,  \mbox{$\{\langle _{-}v,e\rangle^{-1}(\{z\}) \mid z \in Z\}.$}
The set of edges of $X$ which are \textit{broken} by this same map \mbox{$\langle _{-}v,e\rangle \colon \V(X) \to Z$} is
$$
\delta(\langle _{-}v,e\rangle)  \coloneq \{ (g,s) \in \E(X) \mid   \langle \iota_{X}(g,s)v,e \rangle   \ne
  \langle \tau_{X}(g,s)v,e \rangle\}.$$
Thus, $$\bigl((g,s) \in \delta(\langle _{-}v,e\rangle)\bigr)  \Leftrightarrow   \bigl(\langle gv ,e \rangle   \ne
  \langle gsv,e \rangle\bigr) {\Leftrightarrow} \bigl(( e \in (gv) \triangledown (gsv) = g(v\triangledown(sv))\bigr).$$
 Hence, \vspace{-2mm}
$$\delta(\langle _{-}v,e\rangle)    =   \{ (g,s) \in \E(X) \mid  g^{-1}e \in v \triangledown(sv) \}.$$
For each $s \in S$,
$v \triangledown (sv)$ is  finite, since
$v =_{\text a} sv$ in $\Map(E,Z)$. Since
 $ G_e $  and $S$ are finite,  we see that $\delta(\langle _{-}v,e\rangle)$ is finite.
Since $X$ is connected,  the set of
fibres of~$\langle _{-}v,e\rangle$  is finite, and each fibre of~$\langle _{-}v,e\rangle$
is then an element  of~$\BB(X)=\BB(G)$.

Set $ E_G  \coloneq  \{e \in E : \langle _{-}v,e\rangle \text{ is not constant} \}$.
Then $$
E_G  = \{e \in E : \delta(\langle _{-}v,e\rangle)  \ne \emptyset\}
=   \textstyle\bigcup\limits_{g\in G}\bigcup\limits_{s\in S} (g(v \triangledown(sv))).
$$
Since  $\bigcup\limits_{s\in S} (v \triangledown(sv) ) $ is finite, we see that $E_G$  is $G$-finite.

For $g \in G$, $e \in E$, $z \in Z$, we have \mbox{$g(\langle _{-}v,e\rangle ^{-1}(\{z\}))
= \langle _{-}v,ge\rangle ^{-1}(\{gz\})$.}
Set
$\FF \coloneq \{ \langle _{-}v,e\rangle^{-1}(\{z\}) \mid z \in Z, e \in E\}$. Then
$\FF$ is a $G$-finite $G$-subset of~$\BB(G)$.

We shall now prove
\begin{equation}\label{eq:2aa}
G\substabs(V) \subseteq \Almosts(\FF),
\end{equation}
that is, for each \mbox{$H \in G\substabs(V)$}, each
 \mbox{$\langle _{-}v,e\rangle^{-1}(\{z\}) \in \FF$} is almost equal to  some right $H$-set.

\begin{proof}[Proof of~\eqref{eq:2aa}]
Here, $e \in E$, $z \in Z$, and $H$  stabilizes some element $w$ of $V$.
Since \mbox{$w =_{\text a} v$} and   $ G_e $ is finite,
we see that, for all but finitely many $ g \in G$ , we have \mbox{$
\langle  v,g^{-1}e\rangle  = \langle w ,g^{-1}e \rangle$,} that is,
\mbox{$
g^{-1}\langle  gv, e\rangle  = g^{-1}\langle gw ,e \rangle$,} that is,
\mbox{$
 \langle  gv, e\rangle  =  \langle gw ,e \rangle$.}
Thus \mbox{$
\langle _{-}v,e\rangle   =_{\text a} \langle _{-}w,e\rangle
$}.  In particular, \mbox{$
\langle _{-}v,e\rangle^{-1}(\{z\})  =_{\text a} \langle _{-}w,e\rangle^{-1}(\{z\})
$}, and the  latter set is easily seen to be a right $H$-set.
This completes the proof of~\eqref{eq:2aa}.
\end{proof}

We shall next prove
\begin{equation}\label{eq:2ab}
G\substabs(V) \supseteq \Almosts(\FF).
\end{equation}

\begin{proof}[Proof of~\eqref{eq:2ab}] Consider any \mbox{$H \in \Almosts(\FF)$}.
It suffices to construct some\linebreak  \mbox{$w \in \Map(E,Z)$} such that \mbox{$w =_{\text a} v$} and
$H$ stabilizes $w$.

Set $E_H \coloneq \{e \in E : \langle _{-}v,e\rangle\vert_H     \text{ is  not  constant}\}$,
an $H$-subset of $E_G$.  Thus, for \mbox{$h \in H$} and \mbox{$e \in E-E_H$,} we have
\mbox{$  \langle hv,e\rangle  =    \langle v,e\rangle  $,} and we see that
 $H$ stabilizes $\langle v, _{-}\rangle \vert_{E-E_H}$.

Consider any $e \in E_G$.  We  saw above that
$\langle _{-}v,e\rangle$ takes only finitely many values in~$Z$,
and we are assuming that, for each $z \in Z$,
$\langle _{-}v,e\rangle^{-1}(\{z\})$ is almost equal to a right $H$-subset of $G$.  Hence,
$\langle _{-}v,e\rangle\vert_H$ is
almost equal to a  constant map, and, also,
 for all but  finitely many $g$ in a right $H$-transversal in $G$, $\langle _{-}v,e\rangle\vert_{gH}$ is
 constant;  here,  $\langle g_{-}v,e\rangle\vert_{H}$ is
 constant,  $g^{-1}\langle g_{-}v,e\rangle\vert_{H}$ is
  constant, $\langle _{-}v,g^{-1}e\rangle\vert_{H}$ is
  constant, $g^{-1}e \in E{-}E_H$,  and $Hg^{-1} e \cap  E_H = \emptyset$.
It follows that $Ge \cap  E_H $ is $H$-finite.

We also saw above that $E_G$  is $G$-finite.
It now follows that $E_H$ is $H$-finite.

Let us  deal first with the case where  $H$ is infinite.  For each $e \in E$,
as $\langle _{-}v,e\rangle\vert_H$ is  almost
equal to a constant map and $H$ is infinite, there exists a unique $z_e \in Z$ such that,
for all but finitely many $h \in H$,  $\langle hv,e\rangle = z_e$.
For each $e \in E$ and \mbox{$h_0 \in H$}, we see that, for all but finitely many \mbox{$h  \in H$},
 $\langle h_0 h v, e\rangle = z_e$, and then
$$ \langle  h v, h_0^{-1} e\rangle = h_0^{-1}\langle h_0hv, e\rangle    = h_0^{-1} z_e;$$
thus, $z_{h_0^{-1}e} = h_0^{-1} z_e$.
Set \mbox{$w: E \to Z$, $e \mapsto \langle w, e \rangle := z_e$}; then $H$ stabilizes~$w$, since
  $$\langle h_0w, e \rangle = h_0\langle w, h_0^{-1}e \rangle = h_0z_{h_0^{-1} e} = z_e = \langle w, e \rangle.$$
For each \mbox{$e \in E{-}E_H$}, \mbox{$\langle v, e\rangle   = z_e = \langle w, e\rangle$};
thus,  \mbox{$\langle v, _{-}\rangle\vert_{E-E_H} =  \langle w, _{-}\rangle \vert_{E- E_H}$}.
 For each \mbox{$e \in E_H$},
for   all but finitely many \mbox{$h \in H$}, \mbox{$\langle hv, e\rangle  = z_e = \langle w, e\rangle$,} and here
$$\langle v, h^{-1}e\rangle  = h^{-1} \langle hv, e\rangle   = h^{-1} \langle w, e\rangle
= \langle  h^{-1}  w,  h^{-1}  e\rangle = \langle   w,  h^{-1}  e\rangle.$$
Since $E_H$ is $H$-finite, we see that $\langle v, _{-}\rangle\vert_{E} =_{\text a}  \langle w, _{-}\rangle \vert_{E}$.
Hence, \mbox{$w \in V$} and \mbox{$H \in G\substabs(V)$}.

It remains to deal with the case where
 $H$ is finite.  Here, the $H$-finite set $E_H$ is finite.
Since \mbox{$G\substabs(E) \subseteq  G\substabs(Z)$} by hypothesis, there exists
 some $G$-stable \mbox{$u \in \Map(E,Z)$.} Define
 $w$ to be the element of $\Map(E,Z)$ such that
$\langle w, _{-}\rangle\vert_{E-E_H} = \langle v, _{-}\rangle\vert_{E-E_H}$ and
\mbox{$\langle w, _{-}\rangle\vert_{E_H}= \langle u, _{-}\rangle\vert_{E_H}$.}
Since $H$ stabilizes  both $\langle v, _{-}\rangle\vert_{E-E_H} $  and
 \mbox{$\langle u, _{-}\rangle\vert_{E_H}$,} we see that $H$ stabilizes $w$.
 Since  $E_H$ is  finite,
 \mbox{$\langle w, _{-}\rangle\vert_{E} =_{\text a}  \langle v, _{-}\rangle \vert_{E}$.}
Hence $w \in V$ and  $H \in G\substabs(V)$.

This completes the proof of~\eqref{eq:2ab}.
\end{proof}

By combining~~\eqref{eq:2aa} and ~\eqref{eq:2ab}, we find that
\begin{equation}\label{eq:2a}
G\substabs(V) = \Almosts(\FF).
\end{equation}

By  Theorem~\ref{thm:nested}, since $\FF$ is a $G$-finite $G$-subset of $\BB(G)$,
 there exists some $G$-finite $G$-tree $T_1$
 such that $\E(T_1)$ is $G$-quasifree and
\begin{equation}\label{eq:4a}
\Almosts(\FF) = G\substabs(\V(T_1)).
\end{equation}

By
successively collapsing   $G$-orbits of any $G$-com\-pressible edges of $ T_1$,
we arrive at a $G$-incompressible
$G$-tree $T_2$ such that
\begin{equation}\label{eq:6a}
G\substabs(\V(T_1)) = G\substabs(\V(T_2)).
\end{equation}

 In summary,
$$
G\substabs(V)   \hskip-2pt  \overset{\eqref{eq:2a}}{=} \hskip-2pt  \Almosts(\FF)
\hskip-2pt  \overset{\eqref{eq:4a}}{=}\hskip-2pt
G\substabs(\V(T_1)) \hskip-2pt  \overset{\eqref{eq:6a}}{=}\hskip-2pt   G\substabs(\V(T_2)).
$$

 As $ G\substabs(V)  = G\substabs(\V(T_2))$,
there exist  $G$-maps $\phi \colon V {\to} \V(T_2)$ and  \mbox{$\psi \colon  \V(T_2) {\to}   V$.}
Since $\V(T_2)$ is $G$-incompressible,
the $G$-map \mbox{$\phi \circ \psi \colon \V(T_2) \to \V(T_2)$}   must be bijective.
Hence $\psi$ is injective, and we may identify
 $\V(T_2)$ with a $G$-subset of~$V$, and $T_2$ with a $G$-subtree of the $G$-graph $\complete(V)$.
The $G$-subgraph $T_3$ of $\complete(V)$ with vertex $G$-set $V$ and edge $G$-set
$$\E(T_3):= \E(T_2) \cup \{(v, \phi(v)) \mid  v \in   V{-}\V(T_2)\}$$
is  a maximal subtree of   $\complete(V)$,
as desired.
\end{proof}

\section{Preliminary results about trees}\label{sec:8}

In the remainder of this article we shall describe some simplifications which may be made in the proof
of the general case of the Almost Stability Theorem.  We will not
 simplify the proofs of the preliminary results about trees.
We collect together the statements of these here, for the convenience of the reader.
The proofs currently known are rather technical and will not be given  here.

\begin{definitions}  Let $\overline T = (\overline V, \overline E, \overline \iota, \overline \tau)$ be
any $G$-tree such that $\overline E$ is $G$-quasifree.
Let $F$ be any $G$-forest with $G$-quasifree edge $G$-set such that
the $G$-set of components of $F$ is $\overline V$.  Thus,
 $F = \bigvee\limits_{w \in  {\overline V}}  T_w$,   for each   $w \in \overline V$,
$T_w$ is a $G_w$-tree with $G_w$-quasifree edge $G_w$-set, and,  for each  $g \in G$,  $g(T_w) = T_{gw}$.

We shall now extend $F$ to a $G$-graph $F \vee \overline E$ by adding $\overline E$ to the
edge $G$-set of $F$ and extending the incidence maps $\iota$ and $\tau$  to
$\overline E$  as follows.  Let $S$ be any $G$-transversal in $\overline E$.
Consider any $\overline e \in S$.  Then
$G_{\overline e}$ is a finite subgroup of $G_{\overline \iota(\overline e)}$,
and, hence, $G_{\overline e}$ stabilizes
some vertex of the $G_{\overline \iota(\overline e)}$-tree $T_{\overline \iota (\overline e)}$.
We take some $G_{\overline{e}}$\,-stable
 vertex of $G_{\overline \iota (\overline e)}$ to be $\iota \overline e$.
For each $g \in G$, we define $\iota(g\overline e) \coloneq g(\iota (\overline e))$, which  is well-defined.
This defines $\iota \colon \overline E \to \V(F)$.  We define $\tau\colon \overline E \to \V(F)$
in a similar manner.  This completes the definition of $F \vee \overline E$.

Collapsing the edges of the subforest $F$ in $F \vee \overline E$ leaves the tree $\overline T$. It follows that
$F \vee \overline E$ is a $G$-tree with $G$-quasifree edge $G$-set.  We say that
$F \vee \overline E$ is a $G$-tree \textit{obtained from $\overline T$ by $G$-equivariantly  blowing up
each $w \in \overline V$ to $T_w$}.
\hfill\qed
\end{definitions}

\begin{definitions}  Let $T$ be any $G$-finite $G$-tree with $G$-quasifree edge $G$-set.
For each $n \in [1{\uparrow}\infty[$\,, set $\E_n(T) \coloneq \{e \in \E(T) : \vert G_e \vert = n \}$, and set
  $$\textstyle \size (T) \coloneq
 \vert G \leftmod \!\E(T) \vert -  \vert G \leftmod\! \V(T) \vert \, + \hskip-8pt \sum\limits_{n\in [1{\uparrow}\infty[}
\hskip -3pt \vert G \leftmod \!\E_n(T) \vert \ttt^n \,\,\in \,\, \integers[ \ttt ].$$
\vskip-.8cm \hfill\qed \vskip.3cm
\end{definitions}

\begin{lemma}\label{lem:fg} Let $T$ be any $G$-tree with $G$-quasifree edge $G$-set,
 $w$ be any vertex of~$T$, and  $H$ be any subgroup of $G_w$.
If $\rank(G \text{ {\normalfont rel} } H) < \omega_{0}$, then the following hold.
\begin{enumerate}[\normalfont (i)]
\item $\rank (G_w \text{ {\normalfont rel} } H) < \omega_{0}$.
\item For each $v \in \V(T) - Gw$,   $\rank(G_v) < \omega_{0}$.
\item There exists some $G$-finite $G$-incompressible $G$-tree $\overline T$
such $\E(\overline T)$ is $G$-quasi\-free and $G\substabs(\V(\overline T)) = G\substabs(\V(T))$.
\end{enumerate}
\end{lemma}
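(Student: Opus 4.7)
The plan is to exploit the $G$-action on $T$ together with the $G$-quasifreeness of $\E(T)$ to reduce questions about vertex stabilizers to combinatorial bookkeeping of words and paths in a finite subtree.

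\emph{For (i).} Fix a finite $S \subseteq G$ with $\langle S \cup H \rangle = G$, and let $K \subseteq T$ be the finite subtree spanned by $\{w\} \cup Sw$. Every $g \in G_w$ admits a factorization as a word in $S \cup S^{-1} \cup H$; tracing the associated walk in $T$ starting at $w$, the $H$-letters leave the walker at $w$, while the $S^{\pm 1}$-letters translate by copies of $K$. The finitely many edge stabilizers of edges of $K$ incident to $Gw$ are finite by $G$-quasifreeness, and choosing, for each $u \in K \cap Gw$, an element $g_u \in G$ with $g_u w = u$ yields finitely many ``return correctors'' $g_u^{-1} s g_{u'}$ lying in $G_w$. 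An inductive tightening of the walk along $K$, one letter at a time, exhibits $g$ as a product in $H$ and these finitely many elements of $G_w$, so $\rank(G_w\text{ rel }H)<\omega_0$.

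\emph{For (ii).} Fix $v \in \V(T) - Gw$, and let $p$ be the reduced $T$-path from $v$ to the nearest vertex $u \in Gw$. The finite edge stabilizers of the edges of $p$ already lie in $G_v$, and the finite subgroup $G_v \cap G_u$ (finite because it fixes every edge of $p$) captures the contribution from $H$-letters. Running an argument parallel to (i) along a finite subtree containing $p$ and finitely many translates under $S$, one sees that any $g \in G_v$, written as a word over $S \cup S^{-1} \cup H$, must restore the walker to $v$; since no nontrivial element of $H$ fixes $v$, the word pattern is severely restricted by the tree geometry and yields a finite generating set for $G_v$ drawn from the edge stabilizers along $p$ together with finitely many displacement correctors. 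Hence $\rank(G_v) < \omega_0$.

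\emph{For (iii).} Using (i) and (ii), for each $G$-orbit representative vertex choose a finite subtree witnessing the generation, and $G$-translate to obtain a $G$-finite $G$-subforest of $T$. Fill in minimal paths in $T$ between its components to get a $G$-finite connected $G$-subtree of $T$ with $G$-quasifree edge $G$-set. Verify that this subtree has the same $G\substabs(\V(\cdot))$ as $T$: one inclusion is obvious from being a $G$-subset; the other uses (i) and (ii) to show that any subgroup $K$ stabilizing a vertex of $T$ also stabilizes a vertex of the subtree (since $K$ is built from data already present in the subtree). Finally, collapse $G$-orbits of $G$-compressible edges to obtain the desired $G$-incompressible $G$-tree $\overline T$.

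The main obstacle is the tightening argument in (i), and its refinement for (ii): one must show uniformly that finite walks suffice to reduce any group element to the proposed finite alphabet, and this requires delicate case analysis of how $H$-letters and $S^{\pm 1}$-letters interact across edges of $K$. The finiteness of edge stabilizers is used essentially at every reduction step. This combinatorial tightening is precisely what makes these lemmas technical, and explains why the detailed proofs are deferred to~\cite{DD}.
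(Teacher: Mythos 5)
Your treatment of (i) and (ii) sketches a combinatorial word\d1tightening argument and then defers its technical core to~\cite{DD}; that is consistent with the paper, which for these two parts gives no argument at all and simply cites~\cite[III.8.1]{DD}. I will only note that ``an inductive tightening of the walk \dots exhibits $g$ as a product \dots'' is exactly the statement that has to be proved, not asserted, so as written this part of your proposal has no independent content beyond the citation.

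The genuine gap is in (iii). You build your $G$-finite $G$-subforest by choosing a finite subtree ``for each $G$-orbit representative vertex'' and translating. But $T$ is not assumed to be $G$-finite: it may have infinitely many $G$-orbits of vertices, in which case your union of translates is not $G$-finite, and the subsequent step of ``filling in minimal paths between components'' is neither $G$-equivariantly controlled nor needed. Worse, the key assertion that the resulting subtree has the same substabilizers as $T$ is carried only by the phrase ``since $K$ is built from data already present in the subtree,'' which is not an argument: nothing in your construction explains why the stabilizer of a vertex $v$ far from the chosen subforest should fix a vertex of it.

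The paper's proof of (iii) is short, avoids both problems, and does not use (i) or (ii) at all. Take a finite $S \subseteq G$ with $1 \in S$ and $\langle H \cup S\rangle = G$, and a \emph{single} finite subtree $T_0$ of $T$ containing $Sw$ (hence $w$). Then $G(T_0)$ is a $G$-finite $G$-subforest, and it is connected: the component of $w$ is stabilized by $H$ (since $H \le G_w$) and by each $s \in S$ (since $sw$ lies in $T_0$, hence in that component), so it is stabilized by all of $G$ and therefore equals $G(T_0)$. For an arbitrary $v \in \V(T)$, the group $G_v$ stabilizes both $v$ and the subtree $G(T_0)$, hence fixes the unique vertex of $G(T_0)$ closest to $v$; this closest-point projection is what yields $G\substabs(\V(G(T_0))) = G\substabs(\V(T))$. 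One then collapses $G$-orbits of $G$-compressible edges, citing~\cite[III.7.2]{DD}. If you want to repair your version, replace the per-orbit subforest by this single finite subtree $T_0 \supseteq \{w\}\cup Sw$ and supply the projection argument in place of the appeal to ``data already present.''
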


\begin{proof} (i) and (ii) hold by~\cite[III.8.1]{DD}, for example.

(iii). Let $S$ be any finite subset of $G$ such that $1 \in S$ and $H \cup S$ generates $G$.
Let $T_0$ be any finite subtree of $T$ containing $Sw$.  Then $G(T_0)$ is a $G$-finite
$G$-subforest of~$T$.  Moreover,   $G(T_0)/\E(G(T_0))$
consists of a single  $G$-orbit in which the image of $w$ is stabilized by $H \cup S$; that is, $G(T_0)$ has only one
component.  Thus,  $G(T_0)$ is a $G$-finite $G$-subtree of $T$.

For each $v \in \V(T)$, $G_v$ stabilizes both  $v$ and $\{G(T_0)\}$.  It follows that $G_v$ stabilizes the (unique)
vertex of $G(T_0)$ which is closest to $v$.  Hence
$$G\substabs(\V(G(T_0))) = G\substabs(\V(T)).$$ Now successively collapsing $G$-orbits of $G$-compress\-ible
edges in $G(T_0)$ leaves a $G$-finite $G$-incompressible $G$-tree $\overline T$
such that $\E(\overline T)$ is $G$-quasifree and
$$G\substabs(\V(\overline T)) = G\substabs(\V(G(T_0))) = G\substabs(\V(T));$$
see~\cite[III.7.2]{DD}.
\end{proof}

\begin{lemma}\label{lem:sizes}  Let $T_1$ and $T_2$ be any $G$-finite, $G$-incompressible
$G$-trees with $G$-quasifree edge $G$-sets.
If   $G\substabs(\V(T_2)) \subseteq G\substabs(\V(T_1))$, then the  following hold.
\begin{enumerate}[\normalfont (i)]
\item\label{it:1size} $3\vert  G \leftmod T_2 \vert \le \vert G \leftmod T_1  \vert.$
\item\label{it:3size} $\size (T_2) \sqsubseteq \size (T_1)$ in $\integers[ \ttt ]$.
\item\label{it:2size} If $\size (T_2) = \size (T_1)$, then
$G\substabs(\V(T_2)) = G\substabs(\V(T_1))$.
\end{enumerate}
\end{lemma}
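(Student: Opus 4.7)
The plan is to convert the hypothesis into a $G$-equivariant morphism $\Phi\colon T_2 \to T_1$ (sending edges to paths) and extract the three conclusions from a careful orbit-count comparison along $\Phi$.

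To begin, the inclusion $G\substabs(\V(T_2)) \subseteq G\substabs(\V(T_1))$ gives a $G$-map $\phi\colon \V(T_2) \to \V(T_1)$ by the $G$-map criterion recalled in Definitions~\ref{defs3.4}. Since $T_1$ is a tree, $\phi$ extends to a $G$-equivariant morphism $\Phi$ by sending each edge $e$ of $T_2$ to the unique reduced $T_1$-path $\gamma_e$ joining $\phi(\iota e)$ to $\phi(\tau e)$. Partition $\E(T_2)$ into the $G$-subsets $E_0 \coloneq \{e : \phi(\iota e)=\phi(\tau e)\}$ and $E_1 \coloneq \E(T_2)-E_0$. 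The pervasive technical observation is that $G_e$ fixes both endpoints of $\gamma_e$ and hence fixes the whole reduced path, so $G_e \le G_f$ whenever $f$ is an edge of $T_1$ appearing on $\gamma_e$.

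For (i), I would contract the $G$-subforest on $E_0$ inside $T_2$ to produce a $G$-tree whose edge $G$-set is $E_1$ and whose vertex $G$-set consists of the components of $T_2-E_1$; the induced $G$-map from this quotient into $T_1$, together with $G$-incompressibility of both $\V(T_1)$ and $\V(T_2)$, forces its vertex-orbit count and $\vert G\leftmod E_1\vert$ to be bounded in terms of $\vert G\leftmod T_1\vert$, while a separate crude estimate on $\vert G\leftmod E_0\vert$ (using that each component of the $E_0$-subforest sits inside some fiber $\phi^{-1}(w)$ for $w \in \V(T_1)$) yields the stated factor-of-three bound. For (ii), I would track the coefficients of $\size(T_1)-\size(T_2)$ in $\integers[\ttt]$: each $G$-orbit in $E_0$ removes both an edge-orbit and a vertex-orbit on the $T_2$ side after the contraction, so such contributions cancel in the constant coefficient; and for each $e \in E_1$ whose image path $\gamma_e$ has length $\ell$ crossing edges $f_1,\dots,f_\ell$ of $T_1$, the inclusions $G_e \le G_{f_i}$ show that each higher-degree coefficient of $\size(T_2)$ is dominated by the corresponding coefficient of $\size(T_1)$ once all lower coefficients coincide, which is precisely $\sqsubseteq$.

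For (iii), the equality $\size(T_2) = \size(T_1)$ forces every one-sided estimate in the previous step to be an equality, so $E_0 = \emptyset$, every $\gamma_e$ has length $1$, and every inclusion $G_e \le G_{f}$ at play is an equality; then $\Phi$ is a $G$-equivariant graph morphism bijective on $G$-orbits of both vertices and edges, which $G$-incompressibility of $T_1$ promotes to a $G$-equivariant tree isomorphism $T_2 \to T_1$, from which $G\substabs(\V(T_2)) = G\substabs(\V(T_1))$ follows at once. The main obstacle I expect is the careful bookkeeping in (ii): one must simultaneously account for edges of $T_2$ whose images are possibly long $T_1$-paths (contributing to multiple higher-degree terms on the $T_1$ side) and for the contraction of $E_0$ on the vertex side, and verify that the net comparison respects the fine-grained lexicographic $\sqsubseteq$-order on $\integers[\ttt]$.
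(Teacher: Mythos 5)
First, note that the paper does not actually prove this lemma: Section~\ref{sec:8} announces that the preliminary results about trees are stated without proof, and the argument given merely quotes~\cite[III.7.5]{DD} for the inequality $\vert G \leftmod\E(T_1)\vert + \vert G\leftmod\V(T_1)\vert \ge \vert G\leftmod\V(T_2)\vert$ and for parts (ii) and (iii), then derives (i) by arithmetic. So you are attempting a from-scratch proof of precisely the technical result the paper defers to the original source. Your setup is fine as far as it goes: the $G$-map $\phi$ exists by the substabilizer criterion of Definitions~\ref{defs3.4}, and $G_e$ does fix the reduced path $\gamma_e$ pointwise, so $G_e \le G_f$ for every edge $f$ on $\gamma_e$.

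The decisive step, however, is a genuine gap. The inclusion $G_e \le G_{f_i}$ only gives that $\vert G_e\vert$ divides $\vert G_{f_i}\vert$, so an edge orbit of $T_2$ contributing to the coefficient of $\ttt^n$ in $\size(T_2)$ gets matched with edge orbits of $T_1$ contributing to coefficients of degree a (possibly larger) multiple of $n$, and several orbits of $T_2$ may hit the same orbit of $T_1$; no coefficient-wise domination follows. Indeed the claimed domination, and parts (i) and (ii) exactly as printed, are false: take $G = A \ast B$ with $A$, $B$ finite nontrivial, let $T_2$ be the Bass-Serre tree (two vertex orbits, one $G$-free edge orbit) and $T_1$ a single $G$-stable vertex; both trees satisfy all the hypotheses and $G\substabs(\V(T_2)) \subseteq G\substabs(\V(T_1))$ holds trivially, yet $\size(T_2) = -1 + \ttt \sqsupset -1 = \size(T_1)$ and $3\vert G\leftmod T_2\vert = 9 > 1 = \vert G\leftmod T_1\vert$. (The statement carries a sign slip: the directions actually used in Sections~\ref{sec:10} and~\ref{sec:11} are $3\vert G\leftmod T_2\vert \ge \vert G\leftmod T_1\vert$ and $\size(T_2) \sqsupseteq \size(T_1)$, and your argument should have flagged this.) Even for the corrected inequalities, a single map $\Phi$ with local bookkeeping does not suffice: the proof of~\cite[III.7.5]{DD} is a delicate induction on orbit counts that uses the $G$-incompressibility of both trees essentially. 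Finally, your part (iii), which promotes size equality to a $G$-isomorphism $T_2 \to T_1$, both overshoots the stated conclusion and rests on the same unsupported accounting.
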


\begin{proof} By~\cite[III.7.5]{DD}, \mbox{$ \vert G \leftmod\!\E(T_1) \vert +  \vert G \leftmod\!\V(T_1) \vert \ge
  \vert G \leftmod\!\V(T_2) \vert$} and (ii) and (iii) hold.  By (ii),
 \mbox{$ \vert G \leftmod\!\E(T_1) \vert -  \vert G \leftmod\!\V(T_1) \vert \ge
\vert G \leftmod\!\E(T_2) \vert -  \vert G \leftmod\!\V(T_2) \vert$.}
By multiplying the former inequality by 2 and adding the result to the latter inequality, we see that (i) holds.
\end{proof}

\section{Notation used in the proof of the general case}\label{sec:9}

Throughout the remainder of the article, the following will apply.

\begin{notation}\label{not:gen}   Let $E$ and $Z$  be  any $G$-sets, and
 $ V$ be any  $G$-stable almost equality class in the $G$-set  $\Map(E,Z)$.

Suppose that  there exists some $G$-stable element in $\Map(E,Z)$
and that $E$ is $G$-quasifree.

By Remarks~\ref{rems:1}(i),  the connected $G$-graph $\complete(V)$ has $G$-quasifree edge $G$-set.

Here $V$ is nonempty.  Let us fix $v_0 \in V$.

For any subset $E'$ of $E$,  we have $E = E' \vee (E{-}E')$, and we identify
$$\Map(E,Z) = \Map(E',Z) \times \Map(E {-}E',Z) = \hskip-10pt
 \textstyle\bigvee\limits_{w \in \Map(E {-}E',Z)} \hskip-3pt (\Map(E',Z) \times  \{w\}),$$
where, for each  $w \in \Map(E {-}E',Z)$, we write $\Map(E',Z) \times  \{w\}$ for the fibre over $w$ of the
restriction map   $$\Map(E,Z) \to \Map(E{-}E',Z), \quad v \mapsto \langle v, _{-} \rangle  \vert_{{E-E'}}.$$
For each \mbox{$v \in \Map(E,Z)$,} we identify $$\{v\} = \{\langle v, _{-} \rangle  \vert_{{ E'}}\} \times
\{\langle v, _{-} \rangle  \vert_{{E-E'}}\},$$ and by abuse of notation we shall write
$$ v  =  \langle v, _{-} \rangle  \vert_{{ E'}}  \times
 \langle v, _{-} \rangle  \vert_{{E-E'}}.$$
We denote by $\pi_{{E'}}$ the self-map of $\Map(E,Z)$ defined by
$$\pi_{{E'}}(v) \coloneq  \langle v, _{-} \rangle  \vert_{{ E'}}
\times \langle v_0, _{-} \rangle  \vert_{{E-E'}} ;$$ thus, the image of
$\pi_{{E'}}$  equals the fibre over $\langle v_0, _{-} \rangle  \vert_{{E-E'}}$.

We denote by $\overline \V(E')$ the image of $V$ under the restriction/projection map
$\Map(E,Z) \to \Map(E',Z)$. Then $\overline \V(E')$ is a $G_{{E'}}$-stable almost equality class
in $\Map(E',Z)$.  Similarly, we also have a $G_{{E'}}$-stable almost equality class
$\overline \V(E{-}E')$ in $\Map(E{-}E',Z)$, and we have the identifications
 $$ V  = \overline \V(E') \times \overline \V(E{-}E') =
\textstyle\bigvee\limits_{w \in \overline \V(E{-}E')}(\overline \V(E') \times \{w\}).$$
We may construct $V$ as $G_{{E'}}$-set
by  blowing up  each $w \in  \overline \V(E{-}E')$ to the $(G_{ E'})_w$-set
$\overline \V(E') \times \{w\} \subseteq V$.
The restriction map
$\Map(E,Z) \to \Map(E',Z)$ carries
\mbox{$\overline \V(E') \times \{w\}$} bijectively to $\overline V(E')$
respecting the $(G_{E'})_{w}$-action.   For some purposes,
 we shall be able to identify $\overline \V(E') \times \{w\}$ with
the  $(G_{ E'})_w$-stable almost equality class $\overline \V(E')$ in $\Map(E',Z)$.

Set $w_0 \coloneq \langle v_0, _{-} \rangle  \vert_{{E-E'}}\in \overline V(E-E')$. We write
$$
\V(E') \coloneq  \overline \V(E') \times \{ w_0 \} = \{v \in V :   \langle v, _{-} \rangle
  \vert_{{E-E'}} =   \langle v_0, _{-} \rangle  \vert_{{E-E'}} \},$$
the fibre over $w_0$.
Then $v_0 \in \V(E') \subseteq V$.  Also,
$\pi_{{E'}}$ maps $V$ to  $\V(E')$ fixing each element of $\V(E')$ and
respecting the $(G_{E'})_{w_0}$-action.

Consider any subgroup $H$  of $G$.

We define $E_H \coloneq   \{e \in E :
\langle _{-}v_0,e \rangle\vert_{H}  \text{ is not constant} \}$, an $H$-subset of $E$.
Notice that $H$ stabilizes  $\langle v_0,_{-}\rangle \vert_{{E-E_H}}$, and $E_H$ is the smallest
 $H$-subset of $E$ with this property.
Also, $$\V(E_H) =  \{ v \in V :
  \langle v, _{-} \rangle  \vert_{E-E_H} =    \langle v_0, _{-} \rangle  \vert_{E-E_H} \}
= \overline V(E_H) \times \{\langle v_0, _{-} \rangle  \vert_{E-E_H}\},$$
and $\V(E_H)$ is an $H$-subset of $V$  that is isomorphic to the $H$-stable almost equality class
$\overline \V(E_H)$ in $\Map(E_H,Z)$.

We wish to  show that some maximal subtree of $\complete(V)$ is  $G$-stable.
It suffices to
show there exists some $G$-subtree $T_{G}$ of $\complete(V)$ with vertex $G$-set $\V(E_{G})$,
for then  $V$ itself is the vertex $G$-set of the $G$-subtree of
$\complete(V)$ with  edge $G$-set
$\E(T_{G}) \cup \{ (v,\pi_{{E_G}}(v)) \mid v \in V{-}\V(E_{G})\}$.

Let $\G$ denote the class   consisting of all those groups   for which the
Almost Stability Theorem~\ref{thm:ast} holds.
\hfill\qed
\end{notation}

To show that $G \in \G$, we may assume that
 {\normalfont Notation~\ref{not:gen} } holds, and it suffices to
show there exists some $G$-subtree  of $\complete(V)$ with vertex $G$-set $\V(E_{G})$.

\section{Finitely generable extensions}\label{sec:10}

The following  result is a modified version of~\cite[III.7.6]{DD}
differing mainly in the additional hypothesis that
\mbox{$G \in \G$.}
The important points are that this weaker form now suffices for our purposes  and the proof
is  simplified in two places by the additional assumption.

\begin{theorem}\label{thm:first} Let {\normalfont Notation~\ref{not:gen}} hold, and
suppose that the following hold:  \mbox{$G \in \G$};
\mbox{$\rank (G \text{\normalfont { rel }} H) < \omega_{0}$;}
for each \mbox{$g \in G{-}H$,} $gE_{H} \cap E_{H} = \emptyset$;
 and,  there exists
 some  $H$-subtree $T_{H}$ of $\complete(V)$   with vertex $H$-set $\V(E_{H})$\,.
Then there exists some $G$-subtree $T_{G}$ of $\complete(V)$ with vertex $G$-set $\V(E_{G})$
such that $T_{H} \subseteq T_{G}$.
\end{theorem}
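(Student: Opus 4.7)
The plan is to extend the given $H$-tree $T_H$ to a $G$-tree on $\V(E_G)$ by combining it with a $G$-tree obtained from the Almost Stability Theorem for $G$ (available because $G \in \G$) by a $G$-equivariant blow-up construction of the kind developed in Section~\ref{sec:8}. The disjointness hypothesis $gE_H \cap E_H = \emptyset$ for $g \in G{-}H$ is precisely what makes such a blow-up coherent.

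First, I would unpack the disjointness hypothesis: it gives $GE_H = \bigvee_{gH \in G/H} gE_H$ as a $G$-set, so that $H$ is the full setwise stabilizer of $E_H$ in $G$, and the disjoint union $F \coloneq \bigvee_{gH \in G/H} gT_H$ is a well-defined $G$-subforest of $\complete(V)$ whose vertex $G$-set is $G\V(E_H) \subseteq \V(E_G)$ and whose $G$-set of components is $G/H$. Each translate $gT_H$ is a $gHg^{-1}$-subtree on $g\V(E_H)$, and different cosets contribute disjoint translates, so the $G$-translates of $T_H$ never interfere.

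Next, I would invoke the hypothesis $G \in \G$. The restriction-projection \mbox{$\Map(E,Z) \to \Map(E_G,Z)$} identifies $\V(E_G) \subseteq V$ with a $G$-stable almost equality class $\overline \V(E_G)$ in $\Map(E_G,Z)$; since $E_G$ is a $G$-subset of the $G$-quasifree $G$-set $E$, it too is $G$-quasifree, and the hypothesis that each element's $G$-stabilizer in $E$ stabilizes some element of $Z$ passes to $E_G$. Applying the Almost Stability Theorem for $G$ to this data yields a $G$-tree $T'$ with vertex $G$-set $\V(E_G)$ and $G$-quasifree edge $G$-set, which however in general will not contain the prescribed $T_H$. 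To rectify this, I would look at the $H$-subtree $T''$ of $T'$ spanned by $\V(E_H)$; by the disjointness of the first step, the $G$-orbit of $T''$ is a genuine $G$-subforest of $T'$. I would then $G$-equivariantly collapse this subforest and blow each collapsed component back up using $gT_H$ (coherently indexed by $gH \in G/H$), applying the blow-up construction of Section~\ref{sec:8}. Lemma~\ref{lem:fg}(iii), together with the hypothesis $\rank(G\text{ rel }H) < \omega_0$, lets me pass beforehand to a $G$-finite $G$-incompressible model on which the modification is controlled, and Lemma~\ref{lem:sizes} certifies that the $G$-substabilizer set is preserved so that the output is indeed a $G$-tree with vertex $G$-set $\V(E_G)$.

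The main obstacle is the replacement of the spanned subtree $T''$ by the prescribed $T_H$ in a $G$-equivariant way. Without the disjointness hypothesis the translates $gT''$ could overlap for $g \in G{-}H$ and no consistent substitution would exist, whereas with disjointness the blow-up construction of Section~\ref{sec:8} applies verbatim because $H$ equals the full setwise stabilizer of both $E_H$ and $\V(E_H)$. A secondary technical issue is ensuring that collapsing $T''$ inside $T'$ and then blowing back up with $T_H$ does not alter the vertex $G$-set; this is controlled by the $G$-incompressibility reduction from Lemma~\ref{lem:fg}(iii) and the size-comparison in Lemma~\ref{lem:sizes}, which together force the substitution to leave $G\substabs(\V(T'))$ unchanged.
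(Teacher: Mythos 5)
The overall shape is right at the very start (the disjointness hypothesis does make $G(T_H)$ a $G$-subforest of $\complete(V)$, and $G \in \G$ does supply a $G$-tree with vertex $G$-set $\V(E_G)$), but the central step of your argument has a genuine gap. You propose to take the $G$-tree $T'$ on $\V(E_G)$, form the subtree $T''$ of $T'$ spanned by $\V(E_H)$, and claim that ``by disjointness'' the $G$-orbit of $T''$ is a $G$-subforest of $T'$ that can be swapped out for $G(T_H)$. The hypothesis $gE_H \cap E_H = \emptyset$ controls translates of $E_H$ inside $E$; it says nothing about where the sets $g\V(E_H)$ sit inside the arbitrary tree $T'$, and the spanned subtrees $gT''$ will in general share vertices and edges, because the geodesics in $T'$ joining points of $g\V(E_H)$ pass through vertices of $\V(E_G){-}G\V(E_H)$ that can equally well lie on geodesics joining points of $g'\V(E_H)$ for $g'H \ne gH$. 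Even if the translates were disjoint, $T''$ has in general strictly more vertices than $\V(E_H)$, so collapsing it and blowing back up by $T_H$ (whose vertex set is exactly $\V(E_H)$) destroys the property that the vertex set is $\V(E_G)$; and Lemma~\ref{lem:sizes} compares two given $G$-finite $G$-incompressible trees, so it cannot ``certify'' that an ad hoc substitution preserves $G\substabs$.

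The paper's proof goes the other way around: it builds the tree \emph{around} the prescribed forest rather than retrofitting the forest into a tree obtained independently. It forms $Y \coloneq G(W)\vee G(T_H)$, where $W$ is a transversal of the infinite-stabilizer vertices of $\V(E_G)$ outside $G(\V(E_H))$ --- a set your proposal never accounts for, and whose $G$-finiteness is itself a delicate bootstrap via Lemma~\ref{lem:sizes}(i). It then introduces the double duals $e^{\ast\ast}$ of the edges of $Y$ and the $G$-quasifree, finitely separating $G$-set $\mathcal{R}$ of witnesses to non-nestedness, and applies $G \in \G$ a \emph{second} time, to the image of $\V(Y)\to\Map(\mathcal{R},\integers_2)$, to obtain a tree $T_{\text{bottom}}$ whose fibres are unions of components of $Y$; on each fibre the double duals are nested, so Corollary~\ref{cor:Dunwoody} produces a tree receiving that fibre edge-bijectively. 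Two equivariant blow-ups (by these fibre trees, then by the tree on $\V(E_G)$) yield a tree containing $Y$ from the outset, and only then does the incompressibility and size argument identify its vertex set with $\V(E_G)$. This second application of the Almost Stability Theorem, to the auxiliary coefficient set $\mathcal{R}$ with values in $\integers_2$, is the key idea your proposal is missing.
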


\begin{proof}
Let $S$ be a finite subset of $G$ such that $H \cup S$ generates $G$.

Set $V_{{\infty}} \coloneq \{v \in \V(E_{G}) - G(\V(E_H)):  G_{{v}} \text{ is infinite}\}$.
For the moment, let $W$ be any  finite  subset of $V_{{\infty}}$.
In~\eqref{eq:infty}, we shall see that $V_{{\infty}}$ is $G$-finite, and   then take
$W$ to be a $G$-transversal in $V_{{\infty}}$.

 It is not difficult to show that, for each $g \in G{-}H$,  $\pi_{{E_H}}$
 sends every element  of $g\V(E_{H})$ to the single point $\langle gv_0,_{-} \rangle \vert_{E_H}
\times \langle v_0,_{-} \rangle \vert_{E-E_H}$.
Recall that  $\pi_{{E_H}}$ fixes each element of~$\V(E_{H})$.  We may then use
the set map $\pi_{{E_H}}$ to construct a graph map $G(T_H) \to T_H$ which collapses each edge in
$(G{-}H)(T_H)$ and acts as the identity map on~$T_H$.   Thus, whenever two vertices of $T_H$
are joined by a $(G{-}H)(T_H)$-path, the two vertices must be equal.
It then follows that  $G(T_{H})$ is a $G$-subforest of $\complete(V)$.
Set \mbox{$Y \coloneq   G(W) \vee G(T_{H})$,} also a $G$-subforest of   $\complete(V)$.

For any subset $E'$ of $E$,  we have the restriction map $\V(Y) \to \Map(E',Z)$,
$v  \mapsto  \langle v,_{-} \rangle \vert_{{\mkern-2mu E'}} $. \vspace{.5mm}

  The map \mbox{$\V(Y) \to \PP(\PP(\V(Y)))$,
$v \mapsto v^{\ast\ast} \coloneq \{ \varepsilon \in \PP(\V(Y)) \mid v \in \varepsilon\}$,}
will be identified with the map
\mbox{$\V(Y) \to \Map(\PP(\V(Y)),\integers_2)$},
$v \mapsto \langle v,_{-} \rangle \vert_{{\PP(\V(Y))}}$,
where, for each \mbox{$(v,\varepsilon ) \in \V(Y) \times \PP(\V(Y))$},
we set $$\langle v, \varepsilon  \rangle \coloneq \begin{cases}
1 \in \integers_2  &\text{if $v \in \varepsilon$,}\\
 0 \in \integers_2 &\text{if $v \in \varepsilon^\comp \coloneq \V(Y){-}\varepsilon $.}
\end{cases}$$
For each subset $\mathcal{E}$ of $\PP(\V(Y))$, we have
the restriction map   $\V(Y) \to \Map(\mathcal{E}, \integers_2)$,
$v \mapsto \langle v,_{-} \rangle \vert_{{\mathcal{E}}} $.

For each $e \in \E(T_H)$, we set
\begin{equation}\label{eq:dd}
  e^{\ast\ast} \hskip-1.5pt\coloneq  \hskip-1.5pt \{v {\in} \V(Y)  \hskip-1.5pt \mid \hskip-1.5pt
 \text{the reduced   $T_{H}$-path  from
$ \pi_{{E_H}}\!(v)$    to $\tau_{{T_H}}\!(e)$ crosses $e$} \}.
\end{equation}
Here, $\iota_{{T_H}}(e) \in e^{\ast\ast}$.
For each $g \in G{-}H$, $g\V(E_{H})$ is mapped to a single point in $\V(E_{H})$
by $ \pi_{{E_H}}$. It follows that
$\delta_{Y}(e^{\ast\ast})  = \{e\}$.
For each $h \in H$, we have $(he)^{\ast\ast} =  h(e^{\ast\ast})$.
For each $g \in G{-}H$, we  write $(ge)^{\ast\ast} \coloneq g(e^{\ast\ast})$, and this is well-defined.
For each subgraph $Y'$ of $Y$, we define $\EE(Y') \coloneq \{e^{\ast\ast} \cap \V(Y') \mid e \in \E(Y')\}$.
For each  $e \in \E(Y)$,
 $\delta_{Y}(e^{\ast\ast})  = \{e\}$. It follows that
$\EE(Y')$ is isomorphic as $G$-set to~$\E(Y')$.

We shall now see the following.
\begin{equation}\label{eq:sep}
\textit{ $\EE(Y)$   is finitely seperating for $\V(Y)$.}
\end{equation}

\begin{proof}[Proof of~\eqref{eq:sep}]
Consider  $v$, $w \in \V(Y)$.  We shall show
\mbox{$\langle v,_{-}\rangle  \vert_{\EE(Y)} =_{\text a} \langle w,_{-}\rangle   \vert_{\EE(Y)}$.}

For all but finitely many $g$ in a right $H$-transversal in $G$,
\mbox{$\langle v,_{-}\rangle  \vert_{{\mkern-2mu gE_H}} =  \langle w,_{-}\rangle   \vert_{{\mkern-2mu gE_H}}$,} and here
\mbox{$\langle v,g_{-} \rangle \vert_{{E_H}} {=} \langle w,g_{-}\rangle \vert_{{E_H}}$}, hence
\mbox{$g^{-1}\langle v,g_{-} \rangle \vert_{{E_H}} = g^{-1}\langle w,g_{-}\rangle \vert_{{E_H}}$,} hence\newline
 $\langle g^{-1}v,_{-} \rangle\vert_{{E_H}} = \langle g^{-1}w,_{-}\rangle \vert_{{E_H}},$
 hence \mbox{$ \langle g^{-1}v,_{-} \rangle\vert_{{\EE (T_H)}} =  \langle g^{-1}w,_{-}\rangle \vert_{{\EE(T_H)}}$}
by~\eqref{eq:dd},
hence  $g\langle g^{-1}v,_{-} \rangle\vert_{{\EE(T_H)}} = g\langle g^{-1}w,_{-}\rangle \vert_{{\EE (T_H)}},$
hence  \mbox{ $ \langle  v,g_{-} \rangle\vert_{{\EE (T_H)}} =  \langle  w,g_{-}\rangle \vert_{{\EE (T_H)}}$,}
and hence $ \langle  v, _{-} \rangle\vert_{{\mkern-2mu g \EE(T_H)}}
=  \langle  w,_{-}\rangle \vert_{{\mkern-2mu g \EE(T_H)}}$.

For all $g  \in G $,
 $ \langle g^{-1}v, _{-} \rangle \vert_{{\EE (T_H)}}$ and $\langle g^{-1}w, _{-} \rangle \vert_{{\EE(T_H)}}$
  differ\vspace{.5mm}  only on the elements of $\EE(T_{H})$ corresponding
 to the elements of $\E(T_{H})$ crossed by the
reduced $T_{H}$-path    from    $ \pi_{{E_H}}(g^{-1}v)$ to  $ \pi_{{E_H}}(g^{-1}w)$,   hence
 $\langle g^{-1}v,_{-} \rangle\vert_{{\EE (T_H)}} =_{\text a} \langle g^{-1}w,_{-}\rangle \vert_{{\EE (T_H)}},$
hence \mbox{$g\langle g^{-1}v,_{-} \rangle\vert_{{\EE (T_H)}}\hskip-.4pt
=_{\text a}\hskip-.4pt g\langle g^{-1}w,_{-}\rangle \vert_{{\EE (T_H)}}$,}
hence $ \langle  v,g_{-} \rangle\vert_{{\EE (T_H)}} \hskip-.4pt=_{\text a}
 \hskip-.4pt \langle  w,g_{-}\rangle \vert_{{\EE (T_H)}},$
and  hence $ \langle  v, _{-} \rangle \vert_{{\mkern-2mu g \EE(T_H)}}
=_{\text a}  \langle  w,_{-}\rangle  \vert_{{\mkern-2mu g \EE(T_H)}}$.\vspace{.5mm}

This completes the proof of ~\eqref{eq:sep}.
\end{proof}

Consider any $e$, $f \in \E(Y)$ with $e \ne f$.  Since $\delta_{Y}(e^{\ast\ast}) = \{e\}$ and
$\delta_{Y}(f^{\ast\ast}) = \{f\}$, it
 follows that exactly one of the four sets  $$ e^{\ast\ast} \cap f^{\ast\ast},\,\,\,
e^{\ast\ast} \cap f^{\ast\ast\comp},\,\,\,  e^{\ast\ast\comp} \cap f^{\ast\ast},\,\,\,
e^{\ast\ast\comp} \cap f^{\ast\ast\comp}$$
has empty coboundary in $Y$;  we denote that set by $r_{e,f}$.
Thus  $r_{e,f}$ is the vertex set of a union of components of $Y$; also,
$ e^{\ast\ast} $ and $f^{\ast\ast}$ are nested in $\V(Y)$ if and only if $r_{e,f} = \emptyset$.

Set $\mathcal{R} \coloneq \{ r_{e,f} \mid e,f \in \E(Y), e \ne f\} - \{\emptyset\}$.

We shall now prove the following crucial facts.
\begin{align}
&\textit{$\mathcal{R}$ is $G$-quasifree.}\label{eq:R1}
\\&\textit{$\mathcal{R}$ is finitely separating for $\V(Y)$.}\label{eq:R2}
\end{align}

\begin{proof} [Proof of~\eqref{eq:R1} and~\eqref{eq:R2}]
We  form a
$G$-subgraph $X$ of  $\complete(V)$
by adding to $Y$ a $G$-finite $G$-set of edges  that will be specified.  We begin as follows.

Recall that $v_{0} \in \V(E_{H})$, that $S$ is a finite subset of $G$ such that $H \cup S$ generates~$G$,
and that $W$ is a  finite subset of  $V_{{\infty}}$.
We take as our first approximation
$$X \coloneq Y \cup \{(gv_{0},gsv_{0}) \mid g \in G, s \in S - G_{{v_0}}\}
\cup \{ (gv_{0},gw) \mid g \in G,  w \in W\},$$ a $G$-subgraph of $\complete(V)$
obtained by adding to $Y$
 a $G$-finite $G$-set of edges.   In $X/\E(X)$,
each element of $T_{H}$ is identified with $v_{0}$, and the image of $v_{0}$ is stabilized
by $H \cup S$ and, hence, is stabilized by $G$;
also,  each element of $W$ is identified with~$v_{0}$, and each element of $G(W)$
 is then identified with $v_{0}$.  Hence,
 $X/\E(X)$ consists of a single $G$-orbit with a single point, and, therefore, $X$ is connected.

We shall now show that $\EE(Y) \subseteq \BB(X)$.
By \eqref{eq:sep},  $\EE(Y)$ is finitely separating for $\V(Y)\,\, ( = \V(X))$.  Hence,
 for each edge $(v,w)$ in $\E(X)$, there exist only finitely many  $e \in \E(Y)$
such that \mbox{$(v,w) \in \delta_{X}(e^{\ast\ast})$.}
Thus, for each edge $(v,w)$ in \mbox{$\E(X){-}\E(Y)$,} there exist only finitely many  $e \in \E(Y)$
such that \mbox{$(v,w) \in \delta_X(e^{\ast\ast})$.}  Hence, for each edge $(v,w)$ in $\E(X){-}\E(Y)$,
and each   $e \in \E(Y)$, there exist only finitely many $g \in G$ such that
\mbox{$(v,w) \in \delta_X(ge^{\ast\ast})$,} or, equivalently, \mbox{$g^{-1}(v,w) \in \delta_X(e^{\ast\ast})$.}
Since \mbox{$\E(X){-}\E(Y)$} is $G$-finite, and $\delta_Y(e^{\ast\ast}) = \{e\}$, we see that
$  \delta_X(e^{\ast\ast}) $ is finite.
It follows that  $\EE(Y) \subseteq \BB(X)$.

In particular,  for each $e \in \E(Y)$,    $X - \delta_X(e^{\ast\ast})$ has only a finite number of
 components.

Also, $\mathcal{R} \subseteq \langle \EE(Y) \rangle_{{\BB}} \subseteq \BB(X)$.
Since $X$  is connected with $G$-quasifree edge $G$-set,
while $\emptyset \not\in \mathcal{R}$ and $\V(X) \not\in \mathcal{R}$,
we see that each element  of $\mathcal{R}$ has nonempty, finite
coboundary in $X$, and, hence,~\eqref{eq:R1} holds.

We have seen that  each edge in $\E(X) {-} \E(Y)$ lies in
 $\delta_{X}(e^{\ast\ast})$ for only finitely many  $e \in \E(Y)$.
 Since $\E(X) {-} \E(Y)$ is $G$-finite, we see that the $G$-set
$$E' \coloneq \{e \in \E(Y) \mid \delta_{X}(e^{\ast\ast}) \cap (\E(X) {-} \E(Y)) \ne \emptyset \}$$ is $G$-finite.
Notice that $E' = \{e \in \E(Y) \mid \delta_{X}(e^{\ast\ast}) \ne \{e\}  \}$.

In particular,  the  $G$-subset of $\E(Y)$ consisting of
those $e \in \E(Y)$ such that $X - \delta_{X}(e^{\ast\ast})$ has more than
two components is $G$-finite, and, for any such $e$, we may connect every component of
 $X - \delta_{X}(e^{\ast\ast})$ to every other component using a finite set of edges of $\complete(V)$.
   Thus adding to $X$   a suitable $G$-finite $G$-set of edges of $\complete(V)$ ensures
  that,  for each $e \in \E(Y)$,    $X - \delta_{X}(e^{\ast\ast})$ has exactly two components.
This is the final form we want for $X$, and we may assume that this is the $X$ that we had from the start,
and the definition for $E'$ now refers to the new~$X$.

We next prove that $\mathcal{R}$ is $G$-finite, and for this it suffices to prove the
$G$-finiteness  of  the $G$-set consisting
of all the pairs \mbox{$(e,f) \in \E(Y) \times \E(Y)$} such that
$e^{\ast\ast}$ and $f^{\ast\ast}$ are not nested for $\V(X)$.

Consider any such $(e,f)$. In particular, $e \ne f$.

Consider first the case where  $\delta_X(e^{\ast\ast}) = \{e\}$.
 Since  $X - \delta_{X}(f^{\ast\ast})$
has two components, we see that
\mbox{$X - (\delta_{X}(f^{\ast\ast}) \cup \delta_{X}(e^{\ast\ast}))$} has at most three components, and, hence,
$e^{\ast\ast}$ and $f^{\ast\ast}$ are nested for $\V(X)$.

Thus, we may assume that $e$ and $f$  lie in the  $G$-finite $G$-set $E'$, and then it remains to show that
for a given $e$ there are only finitely many possibilities for $f$.
Let $A$ be any finite, connected subgraph of $X$ containing $\delta_{X}(e^{\ast\ast})$.
For any $f \in E'$, for all but finitely many $g \in G$,
$\delta_X(f^{\ast\ast}) \cap gA = \emptyset$, and then
\mbox{$\delta_X((g^{-1}f)^{\ast\ast}) \cap  A = \emptyset$.}
By the $G$-finiteness of $E'$, for all but finitely many $f \in E'$,
 $\delta_{X}(f^{\ast\ast}) \cap A = \emptyset$.
If $\delta_{X}(f^{\ast\ast}) \cap A = \emptyset$, then the connected graph
 $A $
lies entirely in either $f^{\ast\ast}$ or
$f^{\ast\ast\comp}$.  Since $A \supseteq \delta_{X}(e^{\ast\ast})$,
 then  $f^{\ast\ast\comp}\cap \delta_{X}(e^{\ast\ast}) = \emptyset $
or $f^{\ast\ast}\cap \delta_{X}(e^{\ast\ast}) = \emptyset $, respectively.
If $f^{\ast\ast\comp}\cap \delta_{X}(e^{\ast\ast}) = \emptyset $, then, since  $f^{\ast\ast\comp}$ is the
vertex set of a connected subgraph of~$X$, $f^{\ast\ast\comp}$ lies entirely in either
$e^{\ast\ast}$ or
$e^{\ast\ast\comp}$, and, hence, $e^{\ast\ast}$ and $f^{\ast\ast}$ are nested.  A
similar  argument applies if $f^{\ast\ast}\cap \delta_{X}(e^{\ast\ast}) = \emptyset $.

Thus, $\mathcal{R}$ is $G$-finite.  For any edge $(v,w)$ of $X$ and any $r \in \mathcal{R}$,
 there exist only finitely many
$g \in G$ such that $g(v,w) \in   \delta_{X}(r)$, or, equivalently, \mbox{$(v,w) \in \delta_{X}(g^{-1}r)$.}
By the $G$-finiteness of $\mathcal{R}$,
$\langle v,_{-} \rangle \vert_{\mathcal{R}} =_{\text a} \langle w,_{-} \rangle \vert_{\mathcal{R}}$.
Since $X$ is connected, it follows that $\mathcal{R}$ is finitely separating for $\V(X)\,\,(= \V(Y))$.

This completes the proof of~\eqref{eq:R2}.
 \end{proof}

By~\eqref{eq:R1}, $\mathcal{R}$ is $G$-quasifree, and, by~\eqref{eq:R2},
the image of the map $$\V(Y) \to \Map(\mathcal{R},\integers_2), \quad
v \mapsto \langle v,_{-}\rangle\vert_{{\mathcal{R}}},$$
 lies in
 a $G$-stable almost equality class.
Since $G \in \G$, the latter $G$-stable almost equality class is then the vertex $G$-set of some
$G$-tree with $G$-quasifree edge $G$-set, and we let $T_\text{bottom}$ denote such a  $G$-tree.
 Here  we have  a $G$-map
$$\V(Y) \to \V(T_{\text{bottom}}) \subseteq \Map(\mathcal{R},\integers_2),
\quad v \mapsto \langle v,_{-} \rangle \vert_{{\mathcal{R}}}.$$

Consider any $u \in    \V(T_{\text{bottom}})$.
Let $V_{{u}} \subseteq \V(Y)$ denote the fibre over $u$.  Since each element of $\mathcal{R}$ is the vertex set of a union of components of $Y$, we see that
$V_{{u}}$ is the vertex set of some subforest $Y_{{u}}$ of $Y$ that is a union of components of $Y$\!.  In particular,
 each component of $Y$ lies entirely in a  fibre, and we have a fibration of $Y$ into unions of components.

We shall now see the following.
\begin{align}\label{eq:u}
&\textit{There exists some $G_{{u}}$-tree $T_{{u}}$ and some $G_{{u}}$-graph
 map $Y_{{u}} \to T_{{u}}$ which}\\[-.1cm]
&\textit{is bijective on edges.}\nonumber
\end{align}

\begin{proof}[Proof of~\eqref{eq:u}]
By \eqref{eq:sep},  $\EE (Y_{{u}})$ is finitely separating for $\V(Y_{{u}})$.

We claim that  $\EE (Y_{{u}})$ is nested for $\V(Y_{{u}})$.
Consider any $e$, $f \in \E(Y_{{u}})$ with $e \ne f$.  We shall show that $r_{e,f} \cap \V(Y_{{u}}) = \emptyset$.
This is clear if   $r_{e,f} = \emptyset$.  Thus we may suppose that $r_{e,f} \ne \emptyset$,
and, hence $r_{e,f} \in \mathcal{R}$. For each $v \in V_{{u}}$, we see that
 \mbox{$\langle v, _{-} \rangle \vert_{{\mathcal{R}}} = \langle u, _{-} \rangle \vert_{{\mathcal{R}}}  = u$.}
In particular, $\langle v,  r_{e,f} \rangle  = \langle u, r_{e,f} \rangle$.
In particular,  $ \langle \iota_{Y}(e),  r_{e,f} \rangle =\langle u, r_{e,f} \rangle$.
Now recall that $ \langle \iota_{Y}(e),  r_{e,f} \rangle =0$.  Hence
\mbox{$ \langle v,  r_{e,f} \rangle  = \langle u, r_{e,f} \rangle = \langle \iota_Y(e),  r_{e,f} \rangle =0.$}
Thus $r_{e,f} \cap \V(Y_{{u}}) = \emptyset$.  This proves the claim.

Hence   $\EE(Y_{{u}})$ is a nested, finitely separating,
 $G_{{u}}$-subset of $\PP(\V(Y_{{u}}))$.  It follows from Corollary~\ref{cor:Dunwoody} that
$T_{{u}} \coloneq \U(\EE(Y_{{u}}))$ is
a  $G_{{u}}$-tree with $G_{{u}}$-edge set \mbox{$\EE(Y_{{u}}) \simeq \E(Y_{{u}})$} and
there exists a natural
$G_{{u}}$-map $\V(Y_{{u}}) \to \V(\T_{{u}})$.
For each edge $e$ of $Y_{{u}}$, it follows from~\eqref{eq:dd} that
\mbox{$(\langle \iota e,_{-}\rangle\vert_{{\EE(Y)}}) \triangledown
 (\langle \tau e,_{-}\rangle\vert_{{\EE(Y)}}) = \{e^{\ast\ast}\}$.} It can then be seen that
we have a $G_{{u}}$-graph map $Y_{{u}} \to T_{{u}}$ that is bijective on edges;
 it is surjective on vertices if $Y_{{u}}$ is nonempty.  This completes the proof of~\eqref{eq:u}.
\end{proof}

We now create a $G$-tree  denoted $T_\text{middle}$  by $G$-equivariantly  blowing
up each vertex $u$ of $T_{\text{bottom}}$
to the $G_{{u}}$-tree $T_{{u}}$.  Then
$T_\text{middle}$ is a $G$-tree with $G$-quasifree edge $G$-set, and there
is specified  a $G$-graph map $Y \to T_\text{middle}$ which is injective on edges.

Since \mbox{$G \in \G$},
there exists some $G$-tree $T_{G}$ with vertex $G$-set $\V(E_{G})$ and $G$-quasifree edge $G$-set.

We now create a $G$-tree  denoted $T_\text{top}$  by $G$-equivariantly  blowing
 up each vertex $v$ of $T_\text{middle}$ to the $G_{v}$-tree~$T_{G}$  using the
incidence maps for $Y$ to make each element of $\E(Y) \subseteq \E(T_\text{middle})$
incident to appropriate copies  of elements of $\V(Y) \subseteq \V(T_{G})$.
 Then $T_\text{top}$ is a $G$-tree with $G$-quasifree edge $G$-set,
$T_\text{top}$   contains the $G$-forest $Y$ as a $G$-subgraph,
and there is specified a $G$-map $\V(T_\text{top}) \to \V(E_{G})$
 which is the identity map on
$\V(Y)$.

We now make some  adjustments to $T_\text{top}$.

Recall that $Y  =  G(W) \vee\, G(T_{H})$.
We may choose a finite subtree $T_0$  of $T_\text{top}$ which contains the  finite  set
 $\{ v_0\} \cup S v_0  \cup W$, and set $T^+ \coloneq G(T_0) \cup Y$.
Then $T^+$ is a connected $G$-subgraph of $T_\text{top}$.
Now $T^+$ is a  $G$-tree with $G$-quasifree edge $G$-set,
 $T^+$ contains the $G$-forest \mbox{$Y $} as a $G$-subgraph,
 $T^+ - Y$ is $G$-finite,
and there is specified a $G$-map $\V(T^+) \to \V(E_G)$ which is the identity map on
$\V(Y)$.

Then $ T^+\!/\E(Y)$   is a  $G$-finite  $G$-tree with $G$-quasifree edge $G$-set.
While it remains possible, we
successively collapse $G$-orbits of edges of $ T^+$
which become $G$-com\-pressible edges in $T^+\!/\E(Y)$; we thus eventually obtain
 a quotient $G$-tree of $T^+$,  denoted~$T$.
Then $T$ is a  $G$-tree with $G$-quasifree edge $G$-set,
 $T$ contains the $G$-forest~\mbox{$Y$} as a $G$-subgraph,
  $T{-}Y$ is $G$-finite,  $T\!/\E(Y)$ is $G$-incom\-pressible,
and there is specified a $G$-map $\V(T) \to \V(E_{G})$ which is the identity map on
$\V(Y)$.

Recall that $V_{{\infty}} \coloneq \{v \in \V(E_G) - G(\V(E_H)):  G_v \text{ is infinite}\}$
and that  $W $ is  an arbitrary   finite subset of $V_{{\infty}}$.
We shall now prove the following.
\begin{equation}\label{eq:infty}
\textit{$V_{{\infty}}$ is $G$-finite.}
\end{equation}

\begin{proof}[Proof of~\eqref{eq:infty}]  Letting $\overline v_0$ denote
the component of $Y$ containing $v_0$, we may write
 $Y/\E(Y) = G(\overline v_0) \vee G(W)$.

Now
$ T\!/\E(Y)$  is a  $G$-finite, $G$-incompressible $G$-tree
with $G$-quasifree edge $G$-set,
 \mbox{$G(\overline v_0) \vee G(W) \subseteq \V(T\!/\E(Y))$},
and there is specified a $G$-map
$$\V(T\!/\E(Y)) -  G \overline v_0 \to \V(E_{G})$$ which is the identity map on
$G(W)$.

 Let $W'$ be an arbitrary  finite
 subset of $V_\infty$  which contains a $G$-transversal in
 the intersection of $V_\infty$ with the $G$-finite image of
the $G$-map $$\V(T\!/\E(Y)) -  G \overline v_0 \to \V(E_{G}).$$
Then $G(W') \supseteq G(W)$.

The entire foregoing
argument applies with $W'$ in place of $W$, and we get a   $G$-finite,  $G$-incompressible
$G$-tree  $ T'\!/\E(Y)$ with $G$-quasifree edge $G$-set,
 such that \mbox{$G (\overline v_0) \vee G(W') \subseteq \V( T'\!/\E(Y))$}
and there is specified a $G$-map
$$\V(T'\!/\E(Y)) -  G \overline v_0 \to \V(E_{G})$$ which is the identity map on
$G(W')$.

By the choice of $W'$,
each  infinite subgroup of $G$ that stabilizes an element of $\V(T/\E(Y))$
stabilizes  an element of  \mbox{$ G \overline v_0  \vee G(W')  \subseteq \V( T'\!/\E(Y))$.}
Each finite subgroup of $G$  stabilizes  an element of  $\V( T'\!/\E(Y))$.
  Hence $$G\substabs(\V(T\!/\E(Y))) \subseteq G\substabs(\V( T'\!/\E(Y))).$$
By Lemma~\ref{lem:sizes}\eqref{it:1size}, $3\vert G\leftmod  (T\!/\E(Y))\vert
\ge \vert G \leftmod   ( T'\!/\E(Y))  \vert$.
Since $\V(T'\!/\E(Y)) \supseteq G(W')$, we see that
\mbox{$3\vert G\leftmod (T\!/\E(Y)) \vert \ge \vert  G\leftmod G(W')  \vert$.}
Thus, we have  a finite upper bound on the number of $G$-orbits in $V_\infty$.
This completes the proof of~\eqref{eq:infty}.
\end{proof}

By~\eqref{eq:infty},  we may assume that   $W$ is taken to be a $G$-transversal in $V_\infty$ from the start.
Then \mbox{$V_\infty = G(W) \subseteq \V(T)$,} and
 any infinite subgroup of $G$ which stabilizes an element of $\V(E_{G})$ stabilizes an element of $\V(T)$.
Each finite subgroup of $G$  stabilizes  an element of  $\V(T)$.
Thus, $$G\substabs(\V(E_{G})) \subseteq G\substabs(\V(T)),$$ and, hence,
there exists a $G$-map $\phi \colon \V(E_{G}) \to \V(T)$ which is the identity on $ \V(E_{H})$.
We already have a $G$-map   $\psi \colon \V(T) \to \V(E_{G})$ which is the identity  \vspace{-.2mm} on $\V(E_{H})$.
Since $T\!/\E(Y)$ is $G$-incompressible, the composite
\mbox{$\V(T) \overset{\psi}{\to} \V(E_{G}) \overset{\phi}{\to} \V(T)$}
is  bijective, and we may identify $\V(T) $  with a $G$-subset of $\V(E_{G})$
respecting the embeddings of $\V(E_{H})$ in $\V(T) $ and $\V(E_{G})$.  We may then
expand $T$ to a $G$-subtree $T_{G}$ of $\complete(V)$ with
vertex $G$-set $\V(E_{G})$ and edge $G$-set
$$\E(T) \cup \{(v,\phi(v)) \mid v \in \V(E_{G})- \V(T)\}.$$
This completes the proof of Theorem~\ref{thm:first}.
\end{proof}

\section{Countably generable extensions}\label{sec:11}

The following is~\cite[III.8.3]{DD}.

\begin{lemma}\label{lem:3} Let {\normalfont Notation~\ref{not:gen}} hold.
If $\rank (G \text{\normalfont { rel }} H)< \omega_{0}$,  then $E_G - G(E_H)$ is $G$-finite.
\end{lemma}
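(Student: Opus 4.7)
The plan is to identify $E_G - G(E_H)$ as the set of $e \in E$ for which $\langle _{-}v_0,e\rangle\colon G\to Z$ is non-constant on $G$ yet constant on every left $H$-coset, and then use a Cayley-graph argument with a finite generating set to bound it inside a $G$-finite $G$-subset of $E$.

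First I would unpack $e \in G(E_H)$. Writing $e = gf$ with $f \in E_H$, the condition that $\langle _{-}v_0,f\rangle\vert_H$ is non-constant translates, via the identity $g\langle hv_0, g^{-1}e\rangle = \langle ghv_0,e\rangle$, into the existence of $h_1,h_2 \in H$ with $\langle gh_1 v_0,e\rangle \ne \langle gh_2 v_0,e\rangle$. Hence $e \in G(E_H)$ precisely when $\langle _{-}v_0,e\rangle$ fails to be constant on some left coset $gH$, so that $e \in E_G - G(E_H)$ precisely when $\langle _{-}v_0,e\rangle$ is non-constant on $G$ but constant on every left $H$-coset.

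Next, using $\rank (G \text{\normalfont { rel }} H) < \omega_{0}$, I would fix a finite $S \subseteq G$ such that $H \cup S$ generates $G$, and consider the Cayley graph $\X(G, H \cup S)$, which is connected. For any $e \in E_G - G(E_H)$, the non-constancy of $\langle _{-}v_0,e\rangle$ on $G$ must be witnessed across some edge of this Cayley graph, giving $g \in G$ and $s \in H \cup S$ with $\langle gv_0,e\rangle \ne \langle gsv_0,e\rangle$. The case $s \in H$ is ruled out by the coset-constancy established above (since $gs \in gH$), forcing $s \in S$ and $e \in (gv_0)\triangledown (gsv_0) = g(v_0\triangledown sv_0)$.

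Therefore $E_G - G(E_H) \subseteq G\bigl(\bigcup_{s \in S}(v_0\triangledown sv_0)\bigr)$. Since $V$ is an almost equality class in $\Map(E,Z)$, each $v_0\triangledown sv_0$ is finite; as $S$ is finite, the union $F \coloneq \bigcup_{s \in S}(v_0\triangledown sv_0)$ is a finite subset of $E$, and $GF$ is a $G$-finite $G$-subset of $E$. Because $E_G - G(E_H)$ is $G$-stable, being the difference of two $G$-sets, and sits inside $GF$, it is itself $G$-finite. No step is genuinely delicate; the only point that needs care is the direction-of-action bookkeeping in the first paragraph, ensuring that the non-constancy datum for $g^{-1}e$ on $H$ correctly translates into non-constancy of $\langle _{-}v_0,e\rangle$ on the left coset $gH$.
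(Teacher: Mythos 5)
Your proof is correct and is essentially the paper's argument: both reduce the claim to showing $E_G - G(E_H) \subseteq G\bigl(\bigcup_{s\in S}(v_0\triangledown (sv_0))\bigr)$ with $S$ a finite set such that $H\cup S$ generates $G$. The only difference is presentational --- the paper observes that the stabilizer of $\langle v_0,{}_{-}\rangle\vert_{E-E'}$ (with $E'\coloneq G(E_H\cup F)$) is a subgroup containing $H\cup S$ and hence is all of $G$, whereas you phrase the same fact as connectivity of the Cayley graph $\X(G,H\cup S)$ together with your coset-constancy characterization of $G(E_H)$.
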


\begin{proof} Let $S$ be a finite subset of $G$ such that $H \cup S$ generates $G$, and
set \mbox{$F \coloneq  \textstyle \bigcup\limits_{s\in S} ( v_0\triangledown(sv_0))$.}
For each $s$ in the finite set $S$, $sv_0 =_{\text{a}} v_0$, and, hence, $F$~is a finite
subset of $E_G$. Set \mbox{$E' \coloneq G(E_H \cup F)$.}  Then $E'$ is a $G$-subset of $E_G$.
Also, $\langle v_0,_{-}\rangle \vert_{{E-E'}}$ is stabilized by each
$g \in S \cup H$, and, hence, is stabilized by $G$.  Thus $E_G \subseteq E'$,
and then $E_G - G(E_H) \subseteq G(F) $ and  $E_G - G(E_H)$ is $G$-finite, as desired.
\end{proof}

The following is part of the proof of~\cite[III.8.5]{DD}.

\begin{proposition}\label{prop:L} Let {\normalfont Notation~\ref{not:gen}} hold.

Suppose that, for each $g \in G{-}H$, $gE_{H} \cap E_{H} = \emptyset$.

Suppose that   $H \le K \le G$  and
  \mbox{$\rank (K \text{\normalfont { rel }} H) < \omega_{0}$.}

Then there exists some  $L$  such that  \mbox{$H \le K \le L \le G$,}
  \mbox{$\rank (L \text{\normalfont { rel }} H) \le \omega_{0}$,}
and, for each $g \in G{-}L$, $gE_{L} \cap E_{L} = \emptyset$.
\end{proposition}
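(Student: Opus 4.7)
The plan is to build $L$ as the union of an ascending chain of subgroups $K = L_0 \le L_1 \le L_2 \le \cdots$ of~$G$, each satisfying $\rank(L_n \text{\normalfont { rel }} H) < \omega_0$, arranged so that the set $M_n \coloneq \{g \in G : gE_{L_n} \cap E_{L_n} \ne \emptyset\}$ is contained in~$L_{n+1}$.  Setting $L \coloneq \bigcup_n L_n$ then yields $\rank(L \text{\normalfont { rel }} H) \le \omega_0$, and, since $E_L = \bigcup_n E_{L_n}$, any $g$ witnessing $gE_L \cap E_L \ne \emptyset$ already witnesses $gE_{L_n} \cap E_{L_n} \ne \emptyset$ for some~$n$, whence $g \in M_n \subseteq L_{n+1} \subseteq L$.

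The main step, given $L_n$, is to show that $M_n$ is the union of $L_n$ together with only \emph{finitely many} $L_n$-double cosets, so that $L_{n+1}$ can be produced by adjoining one element from each such coset.  I would apply Lemma~\ref{lem:3} with $L_n$ in the role of $G$ to get a finite $L_n$-transversal $F_n$ in $E_{L_n} - L_n(E_H)$, giving the disjoint decomposition $E_{L_n} = L_n(E_H) \vee L_n(F_n)$.  Using the hypothesis $gE_H \cap E_H = \emptyset$ for $g \in G - H$ together with $H \le L_n$, I would check that $L_n e \cap E_H = He$ for every $e \in E_H$, so that the $L_n$-orbits inside $L_n(E_H)$ correspond bijectively to the $H$-orbits in~$E_H$.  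A standard orbit argument (using the finiteness of the stabilizers $G_e$, since $E$ is $G$-quasifree) then shows that for $g \in M_n$ the double coset $L_n g L_n$ is determined, up to a finite stabilizer ambiguity, by the ordered pair of $L_n$-orbits $(L_n e, L_n ge)$ in~$E_{L_n}$.

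A case analysis on this pair then runs as follows.  If both orbits lie in $L_n(E_H)$, writing $ge_1 = ke_2$ with $k \in L_n$ and $e_i \in E_H$ yields $(k^{-1}g)E_H \cap E_H \ni e_2$, hence $k^{-1}g \in H$ and $g \in L_n$, contributing no new double coset.  If both orbits lie in $L_n(F_n)$, there are only $|F_n|^2$ possible pairs.  The crucial mixed case has $O_1 = L_n e$ with $e \in E_H$ and $O_2 = L_n f$ with $f \in F_n$ (or the symmetric version); here I would establish the key observation that, for every $f \in E$, the set $Gf \cap E_H$ is empty or a single $H$-orbit.  Indeed, $gf \in E_H$ is equivalent to $f \in g^{-1}E_H$, and since the translates $g'E_H$ indexed by $g'H \in G/H$ are pairwise disjoint by hypothesis, $f$ lies in at most one of them; consequently any two $g_1, g_2$ with $g_i f \in E_H$ satisfy $g_1^{-1}H = g_2^{-1}H$, forcing $g_1 f$ and $g_2 f$ to be $H$-translates.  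This bounds the mixed case by at most $2|F_n|$ (up to stabilizer ambiguity) further double cosets.

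I expect the sole real obstacle to lie in this mixed-case uniqueness, which is where the disjointness hypothesis genuinely enters; granted it, the stabilizer ambiguity in the remaining cases is controlled by $\sum_{f \in F_n}|G_f|$, which is finite because $F_n$ is finite and each $G_f$ is finite.  Once the finiteness of $L_n \backslash M_n / L_n$ is in hand, choosing one representative from each nontrivial double coset and adjoining these finitely many elements to $L_n$ produces $L_{n+1}$ with $\rank(L_{n+1} \text{\normalfont { rel }} L_n) < \omega_0$, hence $\rank(L_{n+1} \text{\normalfont { rel }} H) < \omega_0$ by induction; passing to the union then yields the desired~$L$.
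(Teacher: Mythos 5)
Your proposal is correct and follows essentially the same route as the paper: an ascending chain $K=L_0\le L_1\le\cdots$ with $\{g\in G\mid gE_{L_n}\cap E_{L_n}\ne\emptyset\}\subseteq L_{n+1}$, built by invoking Lemma~\ref{lem:3} for a finite transversal of $E_{L_n}-L_n(E_H)$ and then using the disjointness of the translates $gE_H$ together with the finiteness of edge stabilizers to bound the new double cosets. Your ``key observation'' that $Gf\cap E_H$ is empty or a single $H$-orbit is exactly the paper's remark that an $H$-transversal in $E_H$ is automatically a $G$-transversal in $G(E_H)$, and the three cases of your orbit-pair analysis match the paper's three cases $gS_m\cap(S_m\cup S_H)\ne\emptyset$, $gS_H\cap S_m\ne\emptyset$, $gS_H\cap S_H\ne\emptyset$.
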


\begin{proof} We recursively construct an ascending
 sequence $L_{{\bbl1 0{\uparrow}\infty\bbl1 }}$ of subgroups of $G$ such that,
for each $n \in [0{\uparrow}\infty[$\,,
the following hold.
\begin{enumerate}[(1)]
\item $\rank(L_{{n}} \text{ rel } H) < \omega_{0}$.
\item  $\{g \in G \mid g E_{{L_{n}}} \cap E_{{L_{n}}} \ne \emptyset\} \subseteq L_{{n+1}}$.
\end{enumerate}

We set $L_{0} \coloneq K$. Here (1)  holds with $n=0$.

Suppose that we are given some $m \in [0{\uparrow}\infty[$\, and $L_m$
satisfying (1) with $n=m$.
Let $S_{{H}}$ be an $H$-transversal in $ E_{{H}}$.
Let $S_{m}$ be an $L_{m}$-transversal in \mbox{$E_{{L_m}}{-}L_{m}(E_{{H}})$.}
Set \mbox{$F \coloneq \{g \in G \mid g S_{{m}} \cap (S_{{H}} \cup S_{{m}}) \ne \emptyset\}$.}
Set $L_{{m+1}} \coloneq \gen{L_{m} \cup F} \le G$.

Since, for all $g \in G{-}H$, $gE_{{H}} \cap E_{{H}} = \emptyset$,
 we see that $S_{{H}}$ is a   $G$-transversal in $G(E_{{H}})$.
By Lemma~\ref{lem:3}, $S_{m}$ is finite.  Hence \mbox{$G(S_{m}) \cap S_{{H}}$} is finite.
Recall that  $E$ is $G$-quasifree.  Hence
 $F$ is finite.  Thus (1) holds with $n=m+1$.

Consider any $g \in G$ such that $g E_{{L_m}} \cap E_{{L_m}} \ne \emptyset$.
We wish to show that \mbox{$g \in L_{{m+1}}$.}
Notice that $S_{m} \cup S_{{H}}$ is an
$L_{m}$-transversal in $E_{{L_m}}$.  Hence, on replacing $g$ with an element of $L_{m} g L_{m}$,
we may assume that $g(S_m \cup S_{{H}}) \cap (S_m \cup S_{{H}}) \ne \emptyset$.
If \mbox{$g S_{m} \cap (S_{m} \cup S_{{H}}) \ne \emptyset$}, then  \mbox{$g \in F \subseteq L_{m+1}$.}
If  $g S_{{H}}  \cap  S_{m}   \ne \emptyset$, then  $g \in F^{-1} \subseteq L_{m+1}$.
If \mbox{$g S_{{H}} \cap  S_{{H}} \ne \emptyset$,} then
$g E_{{H}} \cap  E_{{H}} \ne \emptyset$ and $g \in H \le L_{{m+1}}$.
Thus (2) holds with $n=m$.

 This completes the recursive construction of~$L_{{\bbl1 0{\uparrow}\infty\bbl1 }}$.

Set $L \coloneq \bigcup\limits_{n \in [0{\uparrow}\infty[} \hskip-6pt L_{n}$.

Then $K =L_{0} \le L$.

By (1),  $\rank(L \text{ rel } H) \le \omega_{0}$.

We have $E_{L} = \bigcup\limits_{n \in [0{\uparrow}\infty[}  E_{{L_{n}}}$.
For any $g \in G$ such that  $g E_{{L}} \cap E_{{L}} \ne \emptyset$, there exist
$m,\, n \in [0{\uparrow}\infty[$\, such that \mbox{$g E_{L_m} \cap E_{L_n} \ne \emptyset$}, and then, by (2),
\mbox{$g \in L_{\max\{m,n\}+1} \le L$.}

 Thus, $L$ has all the desired properties.
\end{proof}

In the remainder of the section we build a corresponding tree $T_L$.

The following is a modification of~\cite[III.8.2]{DD}.

\begin{lemma}\label{lem:K} Let {\normalfont Notation~\ref{not:gen} } hold, and suppose that $H \le K \le G$.

Suppose that \mbox{$\rank (G \text{\normalfont { rel }} H) < \omega_{0}$,}
and that, for each \mbox{$g \in G{-}H$,} $gE_H \cap E_H = \emptyset$.

Suppose that
 \mbox{$\rank (K \text{\normalfont { rel }} H) < \omega_{0}$.}

Suppose that, for each $L$ with
 $K \le L \le G$ and   $\rank (L \text{\normalfont { rel }} K) < \omega_{0}$,
\begin{enumerate}[\normalfont (a)]
\item $L \in \G$, and,
\item  if $LE_K = E_L$, then, for all $g \in L-K$, $gE_K \cap E_K = \emptyset$.
\end{enumerate}

Suppose that $T_H$ is an  $H$-subtree of $\complete(V)$   with vertex $H$-set $\V(E_H)$.

Suppose that $T_K$ is a   $K$-subtree of $\complete(V)$   with vertex $K$-set~$\V(E_K)$ such that \mbox{$T_H \subseteq T_K$.}

Then there exists some $G$-subtree $T_G$ of $\complete(V)$ with vertex $G$-set $\V(E_G)$
such that $T_K \subseteq T_G$.
\end{lemma}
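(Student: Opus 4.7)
The plan is to reduce the extension from $K$ to $G$ into two pieces: a countably generable extension from $K$ up to an intermediate subgroup $L$, and a final extension from $L$ to $G$ for which the displacement hypothesis of Theorem~\ref{thm:first} is immediately at hand. Since $\rank(K \text{ rel } H) < \omega_{0}$, Proposition~\ref{prop:L} produces some $L$ with $K \le L \le G$, $\rank(L \text{ rel } H) \le \omega_{0}$, and $gE_L \cap E_L = \emptyset$ for every $g \in G - L$. It therefore suffices to construct an $L$-subtree $T_L \supseteq T_K$ of $\complete(V)$ with vertex $L$-set $\V(E_L)$, since a last application of Theorem~\ref{thm:first} (with $L$ in the role of $H$) will then deliver the desired $T_G \supseteq T_L \supseteq T_K$: hypothesis~(a) with the choice $L=G$ gives $G \in \G$ (noting $\rank(G \text{ rel } K) < \omega_{0}$); $\rank(G \text{ rel } L) < \omega_{0}$ is inherited from $\rank(G \text{ rel } H) < \omega_{0}$; and the displacement condition is exactly what Proposition~\ref{prop:L} supplied.

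To build $T_L$, I would construct a countable ascending chain $K = L_0 \le L_1 \le \cdots$ with $L = \bigcup_{n} L_n$, together with inductively defined $L_n$-subtrees $T_{L_n}$ of $\complete(V)$ with vertex $L_n$-sets $\V(E_{L_n})$, by applying Theorem~\ref{thm:first} at each rung, and then set $T_L \coloneq \bigcup_{n} T_{L_n}$. The chain must be chosen so that, for each $n$, $\rank(L_{n+1} \text{ rel } K) < \omega_{0}$ (so that hypothesis~(a) certifies $L_{n+1} \in \G$) and the invariant $L_n E_K = E_{L_n}$ is preserved (so that hypothesis~(b) is applicable). The invariant would be maintained by interleaving the addition of a single generator of $L$ over $K$ with a Proposition~\ref{prop:L}-style saturation, using Lemma~\ref{lem:3} and the $G$-quasifreeness of $E$ to keep each enlargement of finite rank. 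Granted this invariant, Theorem~\ref{thm:first} applies at each step, because the displacement condition $gE_{L_n} \cap E_{L_n} = \emptyset$ for $g \in L_{n+1} - L_n$ follows from hypothesis~(b) applied to $L_{n+1}$: if $g\ell_2 u_2 = \ell_1 u_1$ with $\ell_i \in L_n$ and $u_i \in E_K$, then $(\ell_1^{-1} g \ell_2)E_K$ meets $E_K$, so (b) places $\ell_1^{-1} g \ell_2$ in $K \le L_n$, forcing $g \in L_n$, a contradiction.

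The main obstacle is the construction of this chain: maintaining the invariant $L_n E_K = E_{L_n}$ while keeping every extension of finite rank and exhausting all of $L$ demands a careful back-and-forth interleaving of generator additions with saturation operations, adapting the recursive procedure of Proposition~\ref{prop:L}. Without this invariant, hypothesis~(b) cannot be invoked to supply the displacement property at each rung, and the inductive construction of $T_{L_{n+1}}$ from $T_{L_n}$ via Theorem~\ref{thm:first} would break down.
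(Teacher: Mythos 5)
Your overall architecture (run up an ascending chain from $K$, apply Theorem~\ref{thm:first} at each rung, then finish with one more application of Theorem~\ref{thm:first} to the saturated subgroup $L$ supplied by Proposition~\ref{prop:L}) is coherent in outline, and both the final step and your deduction of the rung-by-rung displacement condition from hypothesis~(b) are correct \emph{granted} the invariant $L_nE_K=E_{L_n}$. But that invariant is precisely the crux of the lemma, and the remedy you offer does not supply it. A ``Proposition~\ref{prop:L}-style saturation'' produces a subgroup $M$ with $gE_M\cap E_M=\emptyset$ for all $g\notin M$; this is a different property from $ME_K=E_M$ and does nothing to force it. Lemma~\ref{lem:3} gives only that $E_M-M(E_K)$ is $M$-finite, not empty; if you enlarge $M$ to absorb the stray orbits into $M(E_K)$, the set $E_M$ grows as well, so you face a fixed-point problem with no evident termination (and a countable saturation would in any case leave $\rank(M \text{ rel } K)=\omega_0$, putting $M$ outside the scope of hypotheses (a) and (b), which you need at every rung). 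Worse, if the invariant did hold along a chain with union $L$, then $LE_K=E_L$ would follow in the limit; the $L$ produced by Proposition~\ref{prop:L} has no reason to satisfy this, so the plan of first fixing $L$ and then exhausting it by an invariant-preserving chain is internally inconsistent.

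The paper attacks exactly this difficulty from the opposite direction. It builds a \emph{descending} chain $G=G_0\ge G_1\ge\cdots$, where $G_{n+1}$ is the $G_n$-stabilizer of $\langle v_0, _{-} \rangle\vert_{E-G_n(E_K)}$ and $E_{n+1}\coloneq G_n(E_K)$, together with $G$-trees $T^{(n)}$ on $\V(E_G)$ containing $G_n$-subtrees $T_{E_n}$ on $\V(E_n)$, constructed by repeated equivariant blowing up using Theorem~\ref{thm:astfg} and Theorem~\ref{thm:first}. The chain is shown to be eventually constant by comparing the sizes of associated $G$-finite incompressible trees via Lemmas~\ref{lem:fg} and~\ref{lem:sizes}; at stabilization one gets $G_nE_K=E_{G_n}=E_n$, which is what unlocks hypothesis~(b), and a final application of Theorem~\ref{thm:first} splices in a $G_n$-tree containing $T_K$. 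This accessibility-based stabilization argument is the real content of the proof, and it is exactly the step your proposal identifies as ``the main obstacle'' but leaves unresolved.
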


\begin{proof}    We  recursively define a descending sequence
$G_{\bbl1 0{\uparrow} \infty\bbl1 }$ of subgroups of $G$ containing $K$
as follows.  We set   $G_0 \coloneq G$, and, given $n \in [0{\uparrow}\infty[$\, and $G_n$, we define
 $G_{n+1}$ to be the $G_n$-stabilizer of   $\langle v_0, _{-} \rangle\vert_{{E{-}G_n(E_K)}} \in \Map(E{-}G_n(E_K),Z)$.

We set $E_0 = E_G$ and, for each $n \in [0{\uparrow}\infty[$, we set
 $E_{n+1} = G_n(E_K)$.  Then
$E_{\bbl1 0{\uparrow} \infty\bbl1 }$ is a descending sequence of subsets of $E_G$ containing $E_K$.

Consider   $n \in [0{\uparrow}\infty[$. Then $E_{n+1} = G_n(E_K) \subseteq E_{G_n}$.  It may be shown that
$$G_{n} = \{g \in G  : \langle gv_0, _{-} \rangle\vert_{{E-E_n}} = \langle v_0, _{-} \rangle\vert_{{E-E_n}}\},$$
and then that $E_{G_n} \subseteq E_n$.   For each $g \in G{-}G_n$,
$\langle gv_0, _{-} \rangle\vert_{{E-E_n}} \ne \langle v_0, _{-} \rangle\vert_{{E-E_n}}$,
and, hence,    $g\V(E_n) \cap \V(E_n) = \emptyset$, since
\mbox{$V(E_{n}) = \overline \V(E_{n}) \times \{\langle{v_0,_{-}\rangle\vert_{{E-E_{n}}}}\}$.}
In particular, $G_{V(E_n)} = G_n$.

We shall now show the following.
\begin{align}\label{eq:Tn}
&\textit{For each $n \in [0{\uparrow}\infty[$\,, the chain of subsets}
\\[-1.5mm]&\textit{$\V(E_H) \subseteq \V(E_n) \subseteq \V(E_G) \subseteq V$ extends to a chain of subgraphs}  \nonumber
\\[-1.5mm]&\textit{$T_H \subseteq T_{E_n}
\subseteq T^{(n)} \subseteq \complete(V)$ such that  $T^{(n)}$ is  a $G$-tree with}  \nonumber
\\[-.5mm]&\textit{vertex  $G$-set $\V(E_G)$, and
$T_{E_n}$ is a  $G_n$-tree with vertex $G_n$-set $\V(E_n)$.}  \nonumber
\end{align}

\begin{proof}[Proof of~\eqref{eq:Tn}] Notice that $\V(E_0) = \V(E_G)$.
By (a), $G \in \G$.  By Theorem~\ref{thm:first},  there exists some $G$-subtree
 of $\complete(V)$ with vertex $G$-set $\V(E_{G})$ containing~$T_{H}$.
Here we have the desired conditions for $n=0$.

Suppose then that we are given
$n \in [0{\uparrow} \infty[$\,  and $T^{(n)}$ and $T_{E_n}$.  Notice that $G_{{T_{E_n}}} = G_n$.

We have
\begin{align*}
\V(E_n)  &= \overline V(E_n) \times \{  \langle{v_0,_{-}\rangle\vert_{{E-E_{n}}}} \}
=  \overline \V(E_{n+1}) \times \overline \V(E_n{-}E_{n+1})  \times \{  \langle{v_0,_{-}\rangle\vert_{{E-E_{n}}}} \}\\
& =
\textstyle\bigvee\limits_{w \in \overline \V(E_n{-}E_{n+1})\times \{  \langle v_0,_{-}\rangle\vert_{E-E_{n}}\}}
 \hskip-50pt(\overline \V(E_{n+1}) \times \{w\}).
\end{align*}

Now
\mbox{$\overline \V(E_n{-}E_{n{+}1})$} is a $G_n$-stable almost equality class
in $\Map(E_n{-}E_{n{+}1}, Z)$. By (a),
\mbox{$G_n \in  \G$.}
Hence, there exists some $G_n$-tree with vertex $G_n$-set
$\overline \V(E_n{-}E_{n+1})$ and  $G_n$-quasifree edge $G_n$-set.
Equivalently, there exists some $G_n$-tree $\overline T$ with vertex $G_n$-set
$\overline \V(E_n{-}E_{n+1}) \times \{  \langle v_0,_{-}\rangle\vert_{E-E_{n}}\}$
and with $G_n$-quasifree edge $G_n$-set.

Let $w_0\coloneq  \langle{v_0,_{-}\rangle\vert_{{E-E_{n+1}}}} \in \overline \V(E{-}E_{n+1}).$

We now take   $w \in \V(\overline T) = \overline \V(E_n{-}E_{n+1}) \times \{  \langle v_0,_{-}\rangle\vert_{E-E_{n}}\}$
and consider two cases, where in Case 1   \mbox{$w \not\in G_n (w_0)$} and  in Case 2    $w = w_0$,
and here $(G_n)_{w_0} = G_{n+1} \ge K$.

By Lemma~\ref{lem:fg}, in Case 1,   $\rank((G_n)_w) < \omega_{0}$,
while in Case 2, we have $\rank( G_{n+1} \text{ rel } K) < \omega_{0}.$

In Case 1,  by Theorem~\ref{thm:astfg},
 $(G_n)_w \in \G$,
and  there then exists some
$(G_n)_w$-tree
with $(G_n)_w$-quasifree edge $(G_n)_w$-set and
vertex $(G_n)_w$-set $\overline \V(E_{n+1})$.  Equivalently,
there exists some
$(G_n)_w$-tree $T_w$ with $(G_n)_w$-quasifree edge $(G_n)_w$-set and
vertex $(G_n)_w$-set $\overline \V(E_{n+1}) \times \{w\}$.

In Case 2, by (a), $G_{n+1}  \in \G$.
By Theorem~\ref{thm:first},  there exists some $G_{n+1}$-subtree $T_{w_{0}}^-$
 of $\complete(V)$ with vertex $G_{n+1}$-set $ \V(E_{G_{n+1}})$ such that \mbox{$T_H \subseteq T_{w_{0}}^-$.}
Then $T_{w_{0}}^-$ can  be extended to  some $G_{n+1}$-subtree, denoted $T_{w_{0}}$ and $T_{E_{n+1}}$,
 of $\complete(V)$ with vertex $G_{n+1}$-set $ \V(E_{n+1}) = \overline \V(E_{n+1}) \times \{w_0\}$.

We now $G_n$-equivariantly blow up each vertex $w$ of $\overline T$ to $T_w$ and get a  $G_n$-tree~$T$
with $G_n$-vertex set $\V(E_n)$ having a $G_{n+1}$-subtree $T_{E_{n+1}}$ with vertex
$G_{n+1}$-set $\V(E_{n+1})$  such that  $T_H \subseteq T_{E_{n+1}}$.

The $G$-tree $T^{(n)}$ has a  $G_n$-subtree    $T_{E_{n}}$
with vertex $G_n$-set \mbox{$\V(E_n) = \V(T)$.}   We now build $T^{(n+1)}$  from  $T^{(n)}$
by $G$-equivariantly removing the edges in $T_{E_{n}}$ and replacing them
with the edges of the new $G_n$-tree $T$\!, which has the same vertex $G_n$-set as $T_{E_{n}}$.
This completes the proof of~\eqref{eq:Tn}.
\end{proof}

We next show the following.
\begin{equation}\label{eq:stable}
\textit{The descending sequence of subgroups $G_{\bbl1 0{\uparrow} \infty\bbl1 }$ is eventually constant.}
\end{equation}

\begin{proof}[Proof of~\eqref{eq:stable}]
  Here we may assume that all the terms of $G_{\bbl1 0{\uparrow} \infty\bbl1 }$ are  infinite subgroups.

 The $G$-set  $\V(T^{(0)}\!/\E(G(T_H)))$ is   obtained from   $\V(E_G)$ by
identifying all
the elements of $g\V(E_H)$ with each other, for each $g \in G$.
Since $\rank(G \text{ rel } H) < \omega_{0}$, by Lemma~\ref{lem:fg}(iii)
there exists some
$G$-finite $G$-incompressible $G$-tree  $\overline T^{(\omega_{0})}$
such that $\E(\overline T^{(\omega_{0})})$ is $G$-quasifree and
 $$G\substabs(\V(\overline T^{(\omega_{0})}))
 = G\substabs(\V(T^{(0)}\!/\E(G(T_H)))).$$

Consider any $n \in [0{\uparrow}\infty[$.

 The $G$-set  $\V(T^{(n)}\!/\E(G(T_{E_n})))$ is   obtained from   $\V(E_G)$ by
identifying all the elements of $g\V(E_n)$ with each other, for each $g \in G$.
Since $\rank(G \text{ rel } H) < \omega_{0}$, by Lemma~\ref{lem:fg}(iii) there exists some
$G$-finite $G$-incompressible $G$-tree  $\overline T^{(n)}$
such that $\E(\overline T^{(n)})$ is $G$-quasifree and  $$G\substabs(\V(\overline T^{(n)}))
 = G\substabs(\V(T^{(n)}\!/\E(G(T_{E_n})))).$$

Since $\V(E_H) \subseteq   \V(E_n)$, we see that
  there exists a natural $G$-map
 $$\V(T^{(0)}\!/\E(G(T_{H}))) \to \V(T^{(n)}\!/\E(G(T_{E_n}))).$$
Hence, $$G\substabs(\V(T^{(0)}\!/\E(G(T_{H}))))  \subseteq  G\substabs(\V(T^{(n)}/\E(G(T_{E_{n}})))),$$
 and this is equivalent to
$$G\substabs(\V(\overline T^{(\omega_{0})}))  \subseteq   G\substabs(\V(\overline T^{(n)})).$$
By Lemma~\ref{lem:sizes}\eqref{it:1size}, $3\vert G\leftmod   \overline T^{(\omega_{0})} \vert
\ge \vert G \leftmod    \overline  T^{(n)}   \vert$, and we have now shown that,
as $n$ varies over $[0{\uparrow}\infty[$\,,
$\vert G \leftmod     \overline T^{(n)}   \vert$ has a finite bound.
By definition, $\size (\overline T^{(n)})$  is an  element  of
 $\integers[\ttt]$  with non-negative coefficients,
and we now see   that the
sum of the coefficients has a finite
bound.

 Since $V(E_{n+1}) \subseteq \V(E_n)$,  we see that
there exists a natural $G$-map    $$ \V(T^{(n+1)}\!/\E(G(T_{E_{n+1}}))) \to \V(T^{(n)}\!/\E(G(T_{E_{n}}))).$$
Hence,
 $$ G\substabs(\V(T^{(n+1)}/\E(G(T_{E_{n+1}})))) \subseteq G\substabs(\V(T^{(n)}/\E(G(T_{E_{n}})))),$$
  and this is equivalent to
\mbox{$G\substabs(\V(\overline T^{(n+1)}))  \subseteq   G\substabs(\V(\overline T^{(n)}))$.}
Now, by Lemma~\ref{lem:sizes}\eqref{it:3size}, we see that
\mbox{$\size (\overline T^{(n+1)}) \sqsupseteq \size (\overline T^{(n)})$.}
Thus, $\size (\overline T^{(n)})$  increases with $n \in [0{\uparrow}\infty[$\,.
It is not difficult to use the foregoing boundedness restraint to  show that there exists some
$n\in[0{\uparrow}\infty[$\,  such that $\size (\overline T^{(n+1)}) = \size (\overline T^{(n)})$.

By Lemma~\ref{lem:sizes}\eqref{it:2size}, \mbox{$G\substabs(\V(\overline T^{(n+1)}))  = G\substabs(\V(\overline T^{(n)}))$,}
and  this is equivalent to  $$ G\substabs(\V(T^{(n+1)}/\E(G(T_{E_{n+1}}))))= G\substabs(\V(T^{(n)}/\E(G(T_{E_{n}})))).$$
Since $G_{n+1}$ is infinite and the edge $G$-set of $T^{(n+1)}/\E(G(T_{E_{n+1}}))$
is $G$-quasifree, we see that $T^{(n+1)}/\E(G(T_{E_{n+1}}))$ has at most one $G_{n+1}$-stable vertex.
Since  $G_{n+1}$  stabilizes the image of $\V(E_{n+1})$, we see that
the image of $\V(E_{n+1})$ is the  unique $G_{n+1}$-stable vertex of $T^{(n+1)}/\E(G(T_{E_{n+1}}))$.
Since  $T^{(n)}/\E(G(T_{E_{n}}))$ has a $G_{n}$-stable vertex, namely the image of $\V(E_{n})$,
we see that $G_n$ stabilizes some  vertex of $T^{(n+1)}/\E(G(T_{E_{n+1}}))$,
and, as such a vertex is then $G_{n+1}$-stable, it must be the image of $\V(E_{n+1})$.
Thus, $G_{n} \le G_{V(E_{n+1})} = G_{n+1}$.
This completes the proof of~\eqref{eq:stable}.
\end{proof}

By~\eqref{eq:stable}, there exists some  \mbox{$n \in [1{\uparrow}\infty[$}  such that  $G_{n}  = G_{n-1}$.
Hence $$G_nE_K = E_{n+1} \subseteq E_{G_{n}} \subseteq E_n =  G_{n-1}E_K = G_n E_K.$$
Thus \mbox{$G_nE_K = E_{G_n} = E_n$}. By~(b),  for all $g \in G_n{-}K$, $g E_K \cap E_K = \emptyset$.
By (a) and Theorem~\ref{thm:first}, there exists a $G_n$-subtree $T$ of $\complete(V)$
with vertex $G_n$-set $\V(E_n)$ such that~$T_K \subseteq T$.

We now build $T_G$  from  $T^{(n)}$
by $G$-equivariantly removing the edges in $T_{E_{n}}$ and replacing them
with the edges of the new $G_n$-tree $T$, which has the same vertex $G_n$-set as $T_{E_{n}}$.
This completes the proof.
\end{proof}

The following is a modification of~\cite[III.8.4]{DD}.

\begin{theorem}\label{thm:countable} Let {\normalfont Notation~\ref{not:gen}} hold.

Suppose that   $\rank (G \text{\normalfont { rel }} H)\le \omega_{0}$,
and that, for each $g \in G{-}H$, $gE_H \cap E_H = \emptyset$.

Suppose that, for each subgroup $K$ of $G$, if $H \le K$
 and \mbox{$\rank (K \text{\normalfont { rel }} H) < \omega_{0}$,}
then $K \in \G$.

Suppose that  $T_{H}$
is some  $H$-subtree of $\complete(V)$   with vertex $H$-set $\V(E_{H})$\,.

Then there exists some $G$-subtree $T_{G}$ of $\complete(V)$ with vertex $G$-set $\V(E_{G})$
such that $T_{H} \subseteq T_{G}$.
\end{theorem}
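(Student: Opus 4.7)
The plan is to establish Theorem~\ref{thm:countable} by a countable induction along a generating sequence for $G$ over $H$, iteratively invoking Lemma~\ref{lem:K} to extend the tree one step at a time, with Proposition~\ref{prop:L} supplying the separation hypothesis~(b) that Lemma~\ref{lem:K} requires.

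Since $\rank(G\text{\normalfont{ rel }}H)\le\omega_{0}$, I would fix a (possibly finite) enumeration $g_1,g_2,\ldots$ of elements of $G$ with $H\cup\{g_1,g_2,\ldots\}$ generating $G$, and recursively construct an ascending chain $H=K_0\le K_1\le K_2\le\cdots$ of subgroups of $G$, each finitely generable over $H$ and containing $g_n$ at stage $n$ (so $\bigcup_n K_n=G$), together with an ascending chain of subtrees $T_H=T_{K_0}\subseteq T_{K_1}\subseteq T_{K_2}\subseteq\cdots$ of $\complete(V)$ in which $T_{K_n}$ is a $K_n$-subtree with vertex $K_n$-set $\V(E_{K_n})$. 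The sought tree will be $T_G\coloneq\bigcup_n T_{K_n}$.

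The step from $(K_{n-1},T_{K_{n-1}})$ to $(K_n,T_{K_n})$ is carried out by applying Lemma~\ref{lem:K} with $H$ as given, with the ``$K$'' of that lemma equal to $K_{n-1}$, and with the ``$G$'' of that lemma equal to $K_n$. The rank hypotheses, the separation $gE_H\cap E_H=\emptyset$ for $g\in K_n-H$, and the condition that each finitely generable (over $H$) intermediate subgroup $L$ lies in $\G$ are immediate from the hypotheses of Theorem~\ref{thm:countable}; the delicate point is Lemma~\ref{lem:K}'s condition~(b). I plan to arrange condition~(b) by choosing $K_n$ through a Proposition~\ref{prop:L}-style closure of $\gen{K_{n-1},g_n}$: starting with $M_0\coloneq\gen{K_{n-1},g_n}$, set $M_{j+1}\coloneq\gen{M_j\cup F_j}$ where $F_j$ is the finite set of $g\in G$ for which $gS_j\cap(S_{K_{n-1}}\cup S_j)\ne\emptyset$ for fixed $K_{n-1}$- and $M_j$-transversals $S_{K_{n-1}}\subseteq E_{K_{n-1}}$ and $S_j\subseteq E_{M_j}-M_j(E_{K_{n-1}})$; by Lemma~\ref{lem:3} and the $G$-quasifreeness of $E$, each $F_j$ is finite. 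The main obstacle is that such a closure need not terminate in finitely many rounds (Proposition~\ref{prop:L} indeed produces only a countably generable $L$); I would address this by interleaving the enumeration with finitely many closure steps and appealing to the $\size$-monotonicity of Lemma~\ref{lem:sizes}, applied to successive $G$-finite incompressible quotient trees arising in the construction, so as to force termination at each stage and retain $K_n$ finitely generable over $H$.

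Having built the chains, setting $T_G\coloneq\bigcup_n T_{K_n}$ yields a subgraph of $\complete(V)$ that is the ascending union of subtrees and is therefore itself a tree. Its vertex set is $\bigcup_n\V(E_{K_n})=\V(E_G)$: the inclusion $\subseteq$ is direct from $E_{K_n}\subseteq E_G$, and for any $v\in\V(E_G)$ the finite set $v\triangledown v_0$ lies in $E_G=\bigcup_n E_{K_n}$, hence in $E_{K_n}$ for all sufficiently large $n$, which places $v\in\V(E_{K_n})$. Finally, $T_G$ is $G$-stable, since any $g\in G$ lies in some $K_m$ and then permutes each $T_{K_n}$ with $n\ge m$, and $T_G$ contains $T_H$ by construction.
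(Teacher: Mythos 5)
Your skeleton coincides with the paper's: enumerate a generating sequence for $G$ over $H$, build an ascending chain $H=G_0\le G_1\le\cdots$ of subgroups each finitely generable over $H$ with $g_n\in G_n$, extend the tree one step at a time via Lemma~\ref{lem:K} (applied with its ``$G$'' equal to $G_{n+1}$ and its ``$K$'' equal to $G_n$), and take the union. You have also correctly isolated the delicate point, namely securing Lemma~\ref{lem:K}'s condition~(b). But that is exactly where your argument has a genuine gap. As you concede, the Proposition~\ref{prop:L}-style closure $M_0\le M_1\le\cdots$ need not stabilize after finitely many rounds, so it cannot produce a subgroup that is finitely generable over $H$ and closed under the separation condition; and your proposed repair --- ``interleaving finitely many closure steps and appealing to the $\size$-monotonicity of Lemma~\ref{lem:sizes} applied to successive $G$-finite incompressible quotient trees'' --- is not an argument. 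At the stage where $K_n$ must be chosen there are as yet no trees attached to the groups $M_j$ to which Lemma~\ref{lem:sizes} could be applied (the trees are only built \emph{after} the subgroups are fixed, via Lemma~\ref{lem:K}), and even granting such trees, stabilization of $\size$ yields equality of substabilizer sets, not termination of the group-theoretic closure $M_j\mapsto\gen{M_j\cup F_j}$.

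The paper's resolution is different and is the main new idea of this section: one does not need $K_n$ to satisfy the unconditional separation $gE_{K_n}\cap E_{K_n}=\emptyset$ for all $g\in G-K_n$, but only the \emph{conditional} property (condition~(3) of the paper's construction): for every $K$ with $K_n\le K\le G$, $\rank(K\text{ rel }H)<\omega_0$ and $K(E_{K_n})=E_K$, one has $kE_{K_n}\cap E_{K_n}=\emptyset$ for all $k\in K-K_n$. This weaker property \emph{can} be achieved by a finitely-generable-over-$H$ subgroup: among all subgroups $K\supseteq G_{n-1}\cup\{g_n\}$ with $\rank(K\text{ rel }H)<\omega_0$, choose $G_n$ minimizing $\vert K\leftmod(E_K-K(E_H))\vert$, which is finite by Lemma~\ref{lem:3}; a short chain of orbit-count inequalities then forces equality and hence the required disjointness of translates. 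That minimization step, together with the observation that the conditional property is all that Lemma~\ref{lem:K}(b) demands of the intermediate subgroups $L$ with $\rank(L\text{ rel }G_n)<\omega_0$, is precisely what is missing from your proposal; without it the induction cannot get from $G_{n-1}$ to $G_n$. The remainder of your write-up (the union of the ascending chain of subtrees is a tree, its vertex set is $\V(E_G)$ because $v\triangledown v_0$ is finite and $E_G=\bigcup_nE_{G_n}$, and it is $G$-stable) is correct and matches the paper.
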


\begin{proof}  Let $g_{\bbl1 1{\uparrow}\infty\bbl1 }$ be a sequence in $G$ such that
$H \cup g_{[1{\uparrow}\infty[}$ generates $G$.

We now recursively construct an ascending sequence of subgroups $G_{\bbl1 0{\uparrow}\infty\bbl1 }$ of $G$
  such that $G_0 = H$, and, for each $n \in [1{\uparrow}\infty[$\,, the following hold.
\begin{enumerate}[(1)]
\item $g_n \in G_n$.
\item $\rank(G_n \text{ rel } H) < \omega_{0}$.
\item Whenever $G_n \le K \le G$ and $\rank(K \text{ rel } H) < \omega_{0}$
and $K(E_{G_n}) = E_K$,  then, for each $k \in K{-} G_n$, $kE_{G_n} \cap E_{G_n} = \emptyset$.
\end{enumerate}

Suppose that we are given $n \in [1{\uparrow}\infty[$ and $G_{n-1}$.
Let $\mathbf{K}$ denote the set of those subgroups $K$ of $G$ such that $K$ contains
$G_{n-1} \cup \{g_{n}\}$ and $\rank(K \text{ rel } H) < \omega_{0}$.
By Lemma~\ref{lem:3}, for each $K \in \mathbf{K}$,
$ E_K{ - }K(E_H) $ is $K$-finite. Hence,  \mbox{$K \leftmod(E_K {-} K(E_H))$ } achieves a minimum value
as $K$ ranges over $\mathbf{K}$.  We take $G_{n}$ to be an element of $\mathbf{K}$ where this
minimum is achieved.   Then $G_{n-1} \le G_{n}$, $g_{n} \in G_{n}$, and
$\rank(G_n \text{ rel } H) < \omega_{0}$.  Consider any
subgroup $K$ of $G$ such that $K$ contains $G_{n}$ and $\rank(K \text{ rel } H) < \omega_{0}$
and $K(E_{G_{n}}) = E_K$.  Then $K \in \mathbf{K}$.  By   minimality for $G_n$,
\begin{align*}
\vert G_{n} \leftmod (E_{ G_{n}}  { -}  G_{n}(E_H)) \vert  &\le \vert K \leftmod (E_K {-} K(E_H)) \vert
 \\&= \vert K \leftmod (K (E_{G_{n}})  {-} (K G_n)(E_H)) \vert
\\&\le \vert K \leftmod (K (E_{G_{n}} { - }G_n(E_H))) \vert
 \\&\le \vert G_{n} \leftmod (E_{ G_{n}}  { -}  G_{n}(E_H)) \vert.
\end{align*}
We  have equality throughout, and then $$ K (E_{G_{n}}) { - }(KG_n)(E_H)   = K(E_{G_{n}}  {-} G_n(E_H)),$$ and,
for each $k \in K{-}G_{n}$,
 $k(E_{G_{n}} - G_{n}(E_H)) \,\,\cap\,\, (E_{G_{n}} - G_{n}(E_H)) = \emptyset$, while, by
hypothesis, $kG_{n}(E_H) \cap G_{n}(E_H) = \emptyset$.   Let $S$ be a right $G_n$-transversal in $K$.
Then
\begin{align*}
 K (E_{G_{n}}) &= (K (E_{G_{n}})  {-} (KG_n)(E_H))\,\, \vee\,\, (KG_n)(E_H)
\\&= \textstyle\bigvee\limits_{s \in S} s(E_{G_{n}} - G_{n}(E_H))\,\, \vee\,\, \bigvee\limits_{s \in S} s G_{n}(E_H)
= \textstyle\bigvee\limits_{s \in S} s E_{G_{n}}.
\end{align*}
Hence, for each $k \in K{-}G_{n}$, $kE_{G_n} \cap E_{G_n} = \emptyset$.
This completes the recursive construction of $G_{\bbl1 0{\uparrow}\infty\bbl1 }$.

We next recursively construct an ascending
 sequence $T_{\bbl1 0{\uparrow}\infty\bbl1 }$ of subtrees of $\complete(V)$ containing $T_H$ such
that, for each $n \in [0{\uparrow}\infty[$\,, $T_n$ is a $G_n$-subtree of $\complete(V)$
with vertex $G_n$-set $\V(E_{G_{n}})$.

We take $T_0 \coloneq T_H$. Suppose that we are given $n \in [0{\uparrow}\infty[$\,\, and $T_n$.
By Lemma~\ref{lem:K}, there exists some $G_{n+1}$-subtree of $\complete(V)$ with
vertex $G_{n+1}$-set $\V(E_{G_{n+1}})$ such that $T_n \subseteq T_{n+1}$.
This completes the recursive construction of $T_{\bbl1 0{\uparrow}\infty\bbl1 }$.

We now take $T_G \coloneq \bigcup\limits_{n \in [0{\uparrow}\infty[} T_n$.
\end{proof}

We shall use two different forms of this result.

\begin{corollary}\label{cor:countable}
Let {\normalfont Notation~\ref{not:gen}} hold.

Suppose that   $\vert G \vert \le \omega_{0}$.

Then there exists some $G$-subtree $T_{G}$ of $\complete(V)$ with vertex $G$-set $\V(E_{G})$.
\end{corollary}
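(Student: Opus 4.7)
The plan is to derive Corollary~\ref{cor:countable} as an essentially immediate consequence of Theorem~\ref{thm:countable}, applied with the subgroup $H$ taken to be the trivial subgroup $\{1\}$ of $G$. The corollary is really just a packaging of the countable case in a form that eliminates the auxiliary subgroup $H$ and the hypothesis on finitely generable subgroups.

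First I would verify each of the four hypotheses of Theorem~\ref{thm:countable} in this setting. The rank hypothesis \mbox{$\rank(G \text{ rel } \{1\}) \le \omega_{0}$} is immediate from the assumption $\vert G \vert \le \omega_{0}$. Next, unwinding the definition of $E_H$ in Notation~\ref{not:gen}, the map \mbox{$\langle _{-}v_0, e\rangle\vert_{\{1\}}\colon \{1\} \to Z$} is automatically constant for every $e \in E$, so $E_{\{1\}} = \emptyset$, and the disjointness condition \mbox{$gE_{\{1\}} \cap E_{\{1\}} = \emptyset$} for each $g \in G{-}\{1\}$ is trivially satisfied. Third, any subgroup $K$ of $G$ with \mbox{$\rank(K \text{ rel } \{1\}) < \omega_{0}$} is by definition finitely generable, and hence satisfies $K \in \G$ by Theorem~\ref{thm:astfg} (the finitely generable case already established).

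It remains to exhibit a suitable $T_{\{1\}}$. Since $E_{\{1\}} = \emptyset$, the definition of $\V(E')$ in Notation~\ref{not:gen} yields
$$\V(E_{\{1\}}) = \{v \in V : \langle v, _{-}\rangle = \langle v_0, _{-}\rangle\} = \{v_0\},$$
so the subgraph of $\complete(V)$ consisting of the single vertex $v_0$ with no edges is a $\{1\}$-subtree of $\complete(V)$ whose vertex set is $\V(E_{\{1\}})$. All hypotheses of Theorem~\ref{thm:countable} are now satisfied, and the theorem produces the desired $G$-subtree $T_G$ of $\complete(V)$ with vertex $G$-set $\V(E_G)$.

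Since every nontrivial content has been placed in the earlier results, there is no serious obstacle in this step; the only thing to watch out for is the verification that $E_{\{1\}} = \emptyset$ (and the consequent identification of $\V(E_{\{1\}})$ with $\{v_0\}$), which is what makes the trivial-subgroup base case of the transfinite setup legitimate.
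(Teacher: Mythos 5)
Your proposal is correct and follows exactly the paper's own route: apply Theorem~\ref{thm:countable} with $H=\{1\}$, noting that $E_{\{1\}}=\emptyset$, that finitely generable subgroups lie in $\G$ by Theorem~\ref{thm:astfg}, and that the single vertex $v_0$ serves as $T_{\{1\}}$. Your additional verifications (the identification $\V(E_{\{1\}})=\{v_0\}$ and the trivially satisfied disjointness condition) are details the paper leaves implicit, and they check out.
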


\begin{proof}
We take $H=\{1\}$ in Theorem~\ref{thm:countable}.  Here,
 $E_H = \emptyset$ and, for each  subgroup $K$ of $G$, if  $\rank (K) < \omega_{0}$,
then $K \in \G$ by Theorem~\ref{thm:astfg}.
\end{proof}

\begin{corollary}\label{cor2} Let {\normalfont Notation~\ref{not:gen}} hold.

Suppose that   $\rank (G \text{\normalfont { rel }} H)\le \omega_{0}$,
and that, for each $g \in G{-}H$, $gE_H \cap E_H = \emptyset$.

Suppose that every subgroup of $G$ lies in  $\G$.

Suppose that  $T_{H}$
is some  $H$-subtree of $\complete(V)$   with vertex $H$-set $\V(E_{H})$\,.

Then there exists some $G$-subtree $T_{G}$ of $\complete(V)$ with vertex $G$-set $\V(E_{G})$
such that $T_{H} \subseteq T_{G}$.  \hfill\qed
\end{corollary}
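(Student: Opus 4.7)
The plan is to observe that this corollary is essentially a direct specialization of Theorem~\ref{thm:countable}. Going through the hypotheses of that theorem: the assumption that Notation~\ref{not:gen} holds is common to both statements; the assumption \mbox{$\rank(G \text{ rel } H) \le \omega_0$} appears verbatim in both; the assumption that $gE_H \cap E_H = \emptyset$ for each $g \in G{-}H$ appears verbatim in both; and the existence of $T_H$ with vertex $H$-set $\V(E_H)$ is assumed in both. The conclusion sought is also identical.

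The only hypothesis of Theorem~\ref{thm:countable} that is not stated verbatim in Corollary~\ref{cor2} is the requirement that, for each subgroup $K$ of $G$ satisfying \mbox{$H \le K$} and \mbox{$\rank(K \text{ rel } H) < \omega_0$}, one has $K \in \G$. Under the hypothesis of Corollary~\ref{cor2} that every subgroup of $G$ lies in $\G$, this condition is trivially satisfied (we simply drop the finite-relative-rank restriction). Hence Theorem~\ref{thm:countable} applies and directly delivers a $G$-subtree $T_G$ of $\complete(V)$ with vertex $G$-set $\V(E_G)$ containing $T_H$, which is exactly what we want.

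No step in this plan involves real work beyond matching hypotheses, so there is no genuine obstacle; the corollary exists simply to record the form of Theorem~\ref{thm:countable} that will feed into the transfinite induction of Section~\ref{sec:12}, where the hypothesis ``every subgroup of $G$ lies in $\G$'' will be available as an inductive assumption (restricted to proper subgroups of the group currently under consideration), in contrast to Corollary~\ref{cor:countable}, where the countability of $G$ combined with Theorem~\ref{thm:astfg} is used to supply the required membership in $\G$ for finitely generable subgroups.
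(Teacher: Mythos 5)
Your proposal is correct and matches the paper exactly: the corollary is stated with a terminal \qed precisely because it is the special case of Theorem~\ref{thm:countable} in which the hypothesis ``$K \in \G$ for every $K$ with $H \le K$ and $\rank(K \text{ rel } H) < \omega_0$'' is supplied trivially by the blanket assumption that every subgroup of $G$ lies in $\G$. Your closing remark about its role in the transfinite induction of Section~\ref{sec:12}, in contrast with Corollary~\ref{cor:countable}, is also accurate.
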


\section{The proof}\label{sec:12}

\begin{proof}[Proof of the Almost Stability Theorem~$\ref{thm:ast}$]
We may assume that   Notation~\ref{not:gen}  holds and it suffices to show that
 there exists some $G$-subtree of $\complete(V)$ with vertex $G$-set $\V(E_G)$.

By Corollary~\ref{cor:countable},   we may assume that $\omega_{0} < \vert G \vert$.

By transfinite induction, we may assume that,   for each   subgroup $H$ of $G$, if
$\vert H \vert < \vert G \vert$, then $H \in \G$.

 Set $\gamma \coloneq \vert G \vert$ and choose a bijective map  $\gamma \to G$, $ \beta \mapsto g_{\beta}$.
We shall recursively construct   an ascending chain of subgroups
\mbox{$(G_{{\beta}}  \mid \beta \le \gamma)$} of~$G$ and, at the same time, an
 ascending chain of subtrees $(T_{{\beta}}  \mid \beta \le \gamma)$
of $\complete(V)$.
For each $\beta \le \gamma$,  we shall set $E_{{\beta }} \coloneq E_{{G_{{\beta}}}}$,
$V_{{\beta}} \coloneq \V(E_{{\beta}})$, and the following will hold.
\begin{enumerate}[(1)]
\item $\{g_{\alpha} \mid \alpha < \beta \} \subseteq G_\beta$.
\item $\vert G_{{ \beta}} \vert \le \max\{\omega_{0}, \vert  \beta  \vert \}$.
\item For each $g \in G{ - }G_{{ \beta}}$, $gE_{\beta} \cap E_{\beta} = \emptyset$.
\item  $\V(T_{ \beta}) = V_{{ \beta}}$ and $G_\beta \E(T_\beta) = \E(T_\beta)$.
\end{enumerate}

Suppose  that  we are given some \mbox{$\beta \le \gamma$} and   a chain of subgroups
\mbox{$(G_{{\alpha}}  \mid \alpha < \beta)$} and a chain of subtrees
\mbox{$(T_{{\alpha}}  \mid \alpha < \beta)$}
satisfying  (1)--(4) at each step.

\medskip

\noindent \textbf{Case 1.} $\beta = 0$.

We define $G_{{0}} \coloneq \{1\}$ and $T_{{0}} = \{v_{{0}}\}$.
Here $E_{{0}} = \emptyset$, $V_{{0}} = \{v_{{0}}\}$  and conditions (1)--(4) hold in Case 1.

\medskip

\noindent \textbf{Case 2.} $\beta$ is a successor ordinal.

By Proposition~\ref{prop:L}, there exists some subgroup  $G_{\beta}$ of $G$ with the properties that
\mbox{$G_{{\beta-1}} \cup \{g_{\beta-1}\} \subseteq G_{\beta}$} and
 $\rank(G_{{\beta}} \text{ rel } G_{{\beta-1}}) \le \omega_{0}$ and,
for each $g \in G{-}G_{{\beta}}$,   $g E_{{\beta}} \cap E_{{\beta}} = \emptyset$.
Hence, (3) holds.

Then $\{g_{\alpha} \mid \alpha < \beta \} = \{g_{\alpha} \mid \alpha < \beta {-}1\} \cup \{g_{{\beta -1}}\}
\subseteq G_{{\beta-1}} \cup \{g_{{\beta -1}}\} \subseteq G_{\beta}$,
and (1) holds.

Since  $\rank(G_{{\beta}} \text{ rel } G_{{\beta-1}}) \le \omega_{0}$, we have
$\vert G_{{\beta}} \vert \le \max\{ \omega_{0}, \vert G_{{\beta-1}}  \vert \}$.  Now\linebreak
 $ \vert G_{{\beta}} \vert \le \max\{ \omega_{0}, \vert G_{{\beta-1}}  \vert \}
\le\max\{ \omega_{0},  \vert \beta{-}1  \vert   \} \le  \max\{ \omega_{0},  \vert  \beta  \vert \},$
  and (2) holds.

Since \mbox{$\vert \beta {-} 1 \vert \le \beta {-}1 < \beta \le \gamma$}, we also have
  \mbox{$\vert G_{{\beta}}\vert \le
\max\{ \omega_{0}, \vert  \beta{-}1 \vert   \} < \gamma$,} and then
 every subgroup of $G_{{\beta}}$ lies in $\G$,
 by the transfinite induction hypothesis.
By Corollary~\ref{cor2}, there exists some $G_{\beta}$-subtree $T_{\beta}$ of $\complete(V)$
with vertex $G_{\beta}$-set $V_{{\beta}}$ such that the   $T_{{\beta - 1}} \subseteq T_{\beta}$.
Hence, (4) holds.

Now conditions (1)--(4) hold in Case 2.

\medskip

\noindent \textbf{Case 3.} $\beta$ is a limit ordinal.

Here, we set
 $G_{{\beta}} \coloneq \bigcup\limits_{\alpha < \beta} G_{{\alpha}}$ and
$T_{{\beta}} \coloneq \bigcup\limits_{\alpha < \beta} T_{{\alpha}}$.

Notice that $E_{{\beta}} = \bigcup\limits_{\alpha < \beta} E_{\alpha}$ and
$V_{{\beta}} = \bigcup\limits_{\alpha < \beta} V_{{\alpha}}$.  Hence (4) holds.

For each $\alpha < \beta$, we have $\alpha{+}1 < \beta$ and
$g_{{\alpha}} \in G_{{\alpha+1}} \subseteq G_{{\beta}}$.  Hence (1) holds.

Notice that $\omega_0 \le \vert{\beta}\vert$. Thus
$$ \textstyle \vert G_{{\beta}} \vert  = \vert \bigcup\limits_{\alpha < \beta}   G_{{\alpha}} \vert
\le \sum\limits_{\alpha < \beta}  \vert G_{{\alpha}} \vert
\le  \sum\limits_{\alpha < \beta}    \max\{ \omega_{0},  \vert \alpha \vert\} \le  \sum\limits_{\alpha < \beta}
   \vert \beta \vert  \le    \vert \beta \vert^2
=     \vert \beta \vert;$$
see, for example,~\cite[Theorem~3.5]{Jech} and ~\cite[Lemma~5.2]{Jech}.  Hence (2) holds.

For each $g \in G$, if $g E_{\beta} \cap E_{\beta} \ne \emptyset$, then there exist
$\alpha_{{1}} < \beta $ and $\alpha_{2} < \beta$ such that $g E_{{\alpha_1}}  \cap E_{{\alpha_2}} \ne \emptyset$,
and then $g \in G_{{\max\{\alpha_1 ,\alpha_2\}}}\le G_{\beta}$. Hence (3) holds.

Thus conditions (1)--(4) hold in Case 3.

\medskip

This completes the recursive construction.

By (1), $G_{{\gamma}} = G$.  By (4), $T_{{\gamma}}$ is a $G$-subtree of $\complete(V)$ with
vertex $G$-set~$\V(E_{G})$.  This completes the proof.
\end{proof}

\newpage

\section{Arbitrary extensions}\label{sec:13}

With a similar argument, we get the relative version,~\cite[III.8.5]{DD}.

\begin{theorem} Let {\normalfont Notation~\ref{not:gen}} hold.

Suppose that, for each $g \in G{-}H$, $gE_H \cap E_H = \emptyset$,
and that there exists some $H$-subtree $T_{H}$ of $\complete(V)$   with vertex $H$-set $\V(E_{H})$\,.
Then there exists some $G$-subtree $T_{G}$ of $\complete(V)$ with vertex $G$-set $\V(E_{G})$
such that $T_{H} \subseteq T_{G}$.
\end{theorem}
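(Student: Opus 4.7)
The plan is to mirror the transfinite induction of Section~\ref{sec:12} almost verbatim, but starting the recursion from $(G_0, T_0) = (H, T_H)$ rather than $(\{1\}, \{v_0\})$, and tracking rank relative to $H$ instead of absolute cardinality. A crucial simplification over Section~\ref{sec:12} is that the Almost Stability Theorem~\ref{thm:ast} has now been fully established, so every group lies in $\G$; thus the hypothesis of Corollary~\ref{cor2} that ``every subgroup of $G$ lies in $\G$'' is automatic and need not be carried as part of the induction.

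If $\rank(G \text{ rel } H) \le \omega_{0}$, the conclusion is immediate from Corollary~\ref{cor2}. So assume $\rank(G \text{ rel } H) > \omega_{0}$, set $\gamma \coloneq \rank(G \text{ rel } H)$, and fix $\{g_\beta \mid \beta < \gamma\} \subseteq G$ such that $H \cup \{g_\beta \mid \beta < \gamma\}$ generates $G$. I would recursively build an ascending chain $(G_\beta \mid \beta \le \gamma)$ of subgroups of $G$ containing $H$ and an ascending chain $(T_\beta \mid \beta \le \gamma)$ of subtrees of $\complete(V)$ containing $T_H$, maintaining, with $E_\beta \coloneq E_{G_\beta}$ and $V_\beta \coloneq \V(E_\beta)$, the conditions: (1) $\{g_\alpha \mid \alpha < \beta\} \subseteq G_\beta$; (2) $\rank(G_\beta \text{ rel } H) \le \max\{\omega_0, \abs{\beta}\}$; (3) for each $g \in G{-}G_\beta$, $gE_\beta \cap E_\beta = \emptyset$; (4) $T_\beta$ is a $G_\beta$-subtree of $\complete(V)$ with $\V(T_\beta) = V_\beta$ and $T_H \subseteq T_\beta$.

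The base case takes $G_0 \coloneq H$ and $T_0 \coloneq T_H$, where (3) and (4) are precisely the given hypotheses. At a successor $\beta = \alpha+1$, I apply Proposition~\ref{prop:L} with the triple $(H, K, G)$ there replaced by $(G_\alpha, \langle G_\alpha \cup \{g_\alpha\}\rangle, G)$—invoking (3) at stage $\alpha$ as the disjointness hypothesis—to obtain $G_\beta$ with $G_\alpha \cup \{g_\alpha\} \subseteq G_\beta$, $\rank(G_\beta \text{ rel } G_\alpha) \le \omega_0$, and (3) at stage $\beta$; then I apply Corollary~\ref{cor2} with $(G, H, T_H)$ there replaced by $(G_\beta, G_\alpha, T_\alpha)$ to extend $T_\alpha$ to the required $G_\beta$-subtree $T_\beta$. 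At a limit $\beta$, I take unions: $G_\beta \coloneq \bigcup_{\alpha < \beta} G_\alpha$ and $T_\beta \coloneq \bigcup_{\alpha < \beta} T_\alpha$, with (3) holding because any offending $g$ already violates (3) at some earlier $\alpha$. Finally, (1) at $\beta = \gamma$ forces $G_\gamma = G$, and (4) delivers the desired $G$-subtree $T_G \coloneq T_\gamma$ with vertex $G$-set $\V(E_G)$ extending $T_H$.

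The only points requiring any verification are the cardinal arithmetic in (2) at limit ordinals and the transfer of Proposition~\ref{prop:L} and Corollary~\ref{cor2} to the intermediate extension $G_\alpha \le G_\beta \le G$; both are routine and proceed exactly as in Section~\ref{sec:12}, since the disjointness condition (3) at stage $\alpha$ plays the role of the hypothesis ``$gE_H \cap E_H = \emptyset$ for $g \in G{-}H$'' of Notation~\ref{not:gen} applied to the subgroup $G_\alpha$ in place of $H$. The main ``obstacle'' is really just bookkeeping: making sure the inductive data are packaged so that Proposition~\ref{prop:L} and Corollary~\ref{cor2} apply at each successor step, which is exactly what condition (3) is engineered to guarantee.
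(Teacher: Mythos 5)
Your proposal is correct and follows essentially the same route as the paper's own proof: a transfinite recursion seeded with $(G_0,T_0)=(H,T_H)$, using Proposition~\ref{prop:L} at successor stages to obtain $G_\beta$ with the disjointness property and Corollary~\ref{cor2} (whose hypothesis that every subgroup lies in $\G$ is now automatic from the already-proved Theorem~\ref{thm:ast}) to extend the tree, and unions at limits. The only differences are cosmetic — you index by $\rank(G\text{ rel }H)$ rather than by $\vert G\vert$ with an enumeration of $G$, and you retain a cardinality condition that is never used and that the paper simply drops.
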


\begin{proof}   Set $\gamma \coloneq \vert G \vert$, and choose a bijective map  $\gamma \to G$, $ \beta \mapsto g_{\beta}$.
 We shall recursively construct   an ascending chain of subgroups
\mbox{$(G_{{\beta}}  \mid \beta \le \gamma)$} of~$G$ and, at the same time, an
 ascending chain of subtrees $(T_{{\beta}}  \mid \beta \le \gamma)$
of $\complete(V)$.
For each $\beta \le \gamma$,  we shall write $E_{{\beta }} \coloneq E_{{G_{{\beta}}}}$,
$V_{{\beta}} \coloneq \V(E_{{\beta}})$, and the following will hold.
\begin{enumerate}[(1)]
\item $\{g_{\alpha} \mid \alpha < \beta \} \subseteq G_\beta$.
\item For each $g \in G - G_{{\beta}}$, $gE_{\beta} \cap E_{\beta} = \emptyset$.
\item  $\V(T_{ \beta}) = V_{{ \beta}}$ and $G_\beta \E(T_\beta) = \E(T_\beta)$.
\end{enumerate}

Suppose  that  we are given some \mbox{$\beta \le \gamma$} and   a chain of subgroups
\mbox{$(G_{{\alpha}}  \mid \alpha < \beta)$} and a chain of subtrees
\mbox{$(T_{{\alpha}}  \mid \alpha < \beta)$}
satisfying  (1)--(3) at each step.

\medskip

\noindent \textbf{Case 1.} $\beta = 0$.

We define $G_{{0}} \coloneq H$ and $T_{{0}} = T_{{H}}$.
Now  conditions (1)--(3) hold in Case 1.

\medskip

\noindent \textbf{Case 2.} $\beta$ is a successor ordinal.

By Proposition~\ref{prop:L}, there exists some subgroup  $G_{\beta}$ of $G$ with the properties that
\mbox{$G_{{\beta-1}} \cup \{g_{\beta-1}\} \subseteq G_{\beta}$} and
 $\rank(G_{{\beta}} \text{ rel } G_{{\beta-1}}) \le \omega_{0}$ and,
for each $g \in G{-}G_{{\beta}}$,   $g E_{{\beta}} \cap E_{{\beta}} = \emptyset$.
Hence, (2) holds.

Then $\{g_{\alpha} \mid \alpha < \beta \} \subseteq G_{{\beta-1}} \cup \{g_{{\beta -1}}\} \subseteq G_{\beta}$,
and (1) holds.

By Theorem~\ref{thm:ast}, every subgroup of $G_{{\beta}}$ lies in $\G$.
By Corollary~\ref{cor2}, there exists some $G_{\beta}$-subtree $T_{\beta}$ of $\complete(V)$
with vertex $G_{\beta}$-set $V_{{\beta}}$ such that the   $T_{{\beta - 1}} \subseteq T_{\beta}$.
Hence, (3) holds.

Now conditions (1)--(3) hold in Case 2.

\medskip

\noindent \textbf{Case 3.} $\beta$ is a limit ordinal.

Here, we define
 $G_{{\beta}} \coloneq \bigcup\limits_{\alpha < \beta} G_{{\alpha}}$ and
$T_{{\beta}} \coloneq \bigcup\limits_{\alpha < \beta} T_{{\alpha}}$.

Notice that $E_{{\beta}} = \bigcup\limits_{\alpha < \beta} E_{\alpha}$ and
$V_{{\beta}} = \bigcup\limits_{\alpha < \beta} V_{{\alpha}}$.  Hence (3) holds.

For each $\alpha < \beta$, we have $\alpha{+}1 < \beta$ and $g_{{\alpha}} \in G_{{\alpha+1}} \subseteq G_{{\beta}}$.  Hence (1) holds.

For each $g \in G$, if $g E_{\beta} \cap E_{\beta} \ne \emptyset$, then there exist
$\alpha_{{1}} < \beta $ and $\alpha_{2} < \beta$ such that $g E_{{\alpha_1}}  \cap E_{{\alpha_2}} \ne \emptyset$,
and then $g \in G_{{\max\{\alpha_1 ,\alpha_2\}}}\le G_{\beta}$. Hence (2) holds.

Thus conditions (1)--(3) hold in Case 3.

\medskip

This completes the recursive construction.

By (1), $G_{{\gamma}} = G$.  By (3), $T_{{\gamma}}$ is a $G$-subtree of $\complete(V)$ with
vertex $G$-set~$\V(E_{G})$.  Since $T_{0} = T_{H}$, this completes the proof.
\end{proof}

\bigskip

\bigskip

\medskip

\noindent \textsc{Departament de  Matem\`atiques,\newline
Universitat  Aut\`o\-noma de Bar\-ce\-lo\-na,\newline
E-08193 Bellaterra (Barcelona), Spain}

\medskip

\noindent \emph{E-mail address}{:\;\;}\url{dicks@mat.uab.cat}

\noindent \emph{URL}{:\;\;}\url{http://mat.uab.cat/~dicks/}


\begin{thebibliography}{19}
\bibitem{GMB1}
George M.\ Bergman,
\textit{On groups acting on locally finite graphs},
Ann.\ of Math.\ (2) \textbf{88} (1968), 335--340.
\bibitem{BHB}
Brian H.\ Bowditch,  \textit{Groups acting on Cantor sets and the end structure of graphs},
Pacific J. Math. \textbf{207}  (2002), 31--60.
\bibitem{CE}
Henri Cartan and Samuel Eilenberg,  \textit{Homological algebra}, Princeton University Press,
Princeton, N. J., 1956.
\bibitem{Dehn}
  M.\ Dehn,\textit{\"Uber die Topologie des dreidimensionalen Raumes}, Math.\  Ann.\ \textbf{69}, (1910) 137--168.
\bibitem{DD}
Warren Dicks and M.\ J.\ Dunwoody,  \textit{Groups acting on graphs},
Cambridge Studies in Advanced Mathematics  \textbf{17}, Cambridge University Press, Cambridge, 1989.
\newline Errata at \url{http://mat.uab.cat/~dicks/DDerr.html}
\bibitem{MJD}
M.\ J.\ Dunwoody,
 \textit{Accessibility and groups of cohomological dimension one},
Proc.\ London Math.\ Soc.\ (3) \textbf{38} (1979), 193--215.
\bibitem{DS}
 M.\ J.\ Dunwoody and E.\ L.\ Swenson,  \textit{The algebraic torus theorem}, Invent.\
 Math.\ \textbf{140}  (2000), 605--637.
\bibitem{EG}
Samuel Eilenberg and Tudor Ganea, \textit{On the Lusternik-Schnirelmann category of abstract groups},
 Ann.\ of Math.\ \textbf{65} (1957), 517--518.
\bibitem{Fox}
Ralph H.\ Fox,
\textit{Free differential calculus.\,I\,\,\,  Derivation in the free group ring},
Ann.\ of Math.\ (2) \textbf{57} (1953), 547--560.
\bibitem{Freudenthal}
Hans Freudenthal, \textit{\"Uber die Enden diskreter Ra\"ume und Gruppen},
 Comment.\ Math.\, Helv.\  \textbf{17} (1945), 1--38.
\bibitem{Holt}
D.\ F.\ Holt,
\textit{Uncountable locally finite groups have one end},
Bull.\ London Math.\ Soc.\ \textbf{13} (1981), 557--560.
\bibitem{Jech}
Thomas Jech,
\textit{Set theory.  The third millenium edition, revised and expanded},
Springer Monographs in Mathematics. Springer-Verlag, Berlin, 2003.
\bibitem{Reidemeister}
Kurt Reidemeister, \textit{Einf\"uhrung in die Kombinatorische Topologie,} Braunschweig, 1932.
\newline John Stillwell's English translation available at
\url{https://arxiv.org/abs/1402.3906}
\bibitem{Roller}
 Martin A.\ Roller, \textit{Constructing group actions on trees}, pp.\ 176--187 in
\textit{Geometric group theory}, Vol.\ 1
(eds. Graham A.\ Niblo and Martin A.\ Roller),
LMS Lecture Notes \textbf{181}, Cambridge University Press, Cambridge, 1993.
\bibitem{Serre}
Jean-Pierre Serre, \textit{Arbres, amalgames, {${\rm SL}_{2}$}.\,\,Cours au
Coll\`ege de France r\'edig\'e avec la col\-laboration de Hyman Bass.}
Ast\'erisque \textbf{46},\,\,Soc.\,\,Math.\,\,de\,\,France,  Paris, 1977.
\bibitem{Specker}
Ernst Specker, \textit{Die erste Cohomologiegruppe von \"Uberlagerungen und
Homotopie-Eigen\-schaften dreidimensionaler Mannigfaltigkeiten},
Comment.\ Math.\ Helv.\ \textbf{23} (1949), 303--333.
\bibitem{Stallings1}
John R.\ Stallings,
\textit{On torsion-free groups with infinitely many ends},
Ann.\ of Math.\ (2) \textbf{88} (1968), 312--334.
\bibitem{Stallings2}
John   Stallings,
\textit{Groups of cohomological dimension one},
pp.\ 124--128 in \textit{Applications of Categorical Algebra} (ed. Alex Heller),
Proc.\ Sympos.\ Pure Math.\ \textbf{XVIII},
Amer.\ Math.\ Soc., Providence, R.I., 1970.
\bibitem{Swan}
Richard G.\ Swan,
\newblock\textit{Groups of cohomological dimension one},
J.\ Algebra \textbf{12} (1969), 585--601.
\end{thebibliography}
\end{document}